\theoremstyle{plain}
\newtheorem{thm}{\protect\theoremname}[section]
\theoremstyle{plain}
\newtheorem{lem}[thm]{\protect\lemmaname}
\theoremstyle{plain}
\newtheorem{prop}[thm]{\protect\propositionname}
\newtheorem{cor}[thm]{Corollary}
\theoremstyle{definition}
\newtheorem{remark}[thm]{Remark}
\newcommand*{\bbP}{\mathbb{P}}
\newcommand*{\scrF}{\mathscr{F}}
\definecolor{electricultramarine}{rgb}{0.25, 0.0, 1.0}
\newcommand*{\fzcst}[1]{\relax\ifmmode\text{\textcolor{electricultramarine}{\sout{\ensuremath{#1}}}}\else\textcolor{electricultramarine}{\sout{#1}}\fi}
\definecolor{darkgreen}{RGB}{30,130,80}
\newcommand*{\ykcst}[1]{\relax\ifmmode\text{\textcolor{darkgreen}{\sout{\ensuremath{#1}}}}\else\textcolor{darkgreen}{\sout{#1}}\fi}
\numberwithin{equation}{section}
\def\renewtheorem#1{\expandafter\let\csname#1\endcsname\relax
  \expandafter\let\csname c@#1\endcsname\relax
  \gdef\renewtheorem@envname{#1}
  \renewtheorem@secpar
}
\def\renewtheorem@secpar{\@ifnextchar[{\renewtheorem@numberedlike}{\renewtheorem@nonumberedlike}}
\def\renewtheorem@numberedlike[#1]#2{\newtheorem{\renewtheorem@envname}[#1]{#2}}
\def\renewtheorem@nonumberedlike#1{  
\def\renewtheorem@caption{#1}
\edef\renewtheorem@nowithin{\noexpand\newtheorem{\renewtheorem@envname}{\renewtheorem@caption}}
\renewtheorem@thirdpar
}
\def\renewtheorem@thirdpar{\@ifnextchar[{\renewtheorem@within}{\renewtheorem@nowithin}}
\def\renewtheorem@within[#1]{\renewtheorem@nowithin[#1]}
\theoremstyle{definition}
\newtheorem{Remark}[thm]{Remark}
\providecommand{\examplename}{Example}
\providecommand{\lemmaname}{Lemma}
\providecommand{\propositionname}{Proposition}
\providecommand{\theoremname}{Theorem}
\begin{document}
\title{Dynamical Low-Rank Approximation for\\ Stochastic Differential Equations}
\author[1]{Yoshihito Kazashi}
\author[2]{Fabio Nobile}
\author[3]{Fabio Zoccolan}

\affil[1]{Department of Mathematics \& Statistics, University of Strathclyde, 26 Richmond St., Glasgow, G1 1XH, UK. email: y.kazashi@strath.ac.uk}
\affil[2]{Institut de Mathématiques, École Polytechnique Fédérale de Lausanne, 1015 Lausanne, Switzerland. email: fabio.nobile@epfl.ch}
\affil[3]{Institut de Mathématiques, École Polytechnique Fédérale de Lausanne, 1015 Lausanne, Switzerland. email: fabio.zoccolan@epfl.ch}

\date{} \maketitle

\begin{abstract}
    \noindent	In this paper, we set the mathematical foundations of the Dynamical Low-Rank Approximation (DLRA) method for stochastic differential equations (SDEs). 
	DLRA aims at approximating the solution as a linear combination of a small number of basis vectors with random coefficients (low rank format) with the peculiarity that both the basis vectors and the random coefficients vary in time.

	While the formulation and properties of DLRA are now well understood for random/parametric equations, the same cannot be said for SDEs and this work aims to fill this gap. We start by rigorously formulating a Dynamically Orthogonal (DO) approximation (an instance of DLRA successfully used in applications) for SDEs, which we then generalize to define a parametrization independent DLRA for SDEs. We show local well-posedness of the DO equations and their equivalence with the DLRA formulation. 
	We also characterize the explosion time of the DO solution by a loss of linear independence of the random coefficients defining the solution expansion and give sufficient conditions for global existence.
\end{abstract}

\maketitle

\section{Introduction}
This paper is concerned with the theoretical foundation of dynamical low-rank methods for stochastic differential equations (SDEs).  
Low-rank methods aim to approximate solutions of high-dimensional differential equations in a well chosen low dimensional subspace.
Such methods are widely used in computational science and industrial applications  \cite{Bachmayr.M_2023_LowrankTensorMethods,lubich2008quantum,quarteroni2014reduced,schilders2008model}. 
Our focus here is on low-rank methods for high-dimensional SDEs, which is a case of primary interest, for instance 
in finance \cite{redmann2021low,redmann2022solving}, or in various applications such as biology \cite{adamer2020coloured} or machine learning \cite{muzellec2022dimension} where SDEs often appear as discretizations of Stochastic Partial Differential Equations (SPDEs).

{Among such low-rank methods, the so-called Dynamical Low-Rank Approximation (DLRA) has gained notable attention and has been successfully applied in many applications in recent years \cite{koch2007dynamical,kieri2016discretized,nonnenmacher2008dynamical}. 
	Unlike traditional reduced basis methods, however, DLRA allows the low-dimensional subspace to evolve over time, enabling it to adapt to possibly rapid temporal variations of the solution.}
    Thus DLRA evolves in a low-rank manifold, not in a fixed linear space of low dimension.  
	In its original form \cite{koch2007dynamical}, such evolution in a manifold is achieved by enforcing the temporal derivative of the DLRA to be in the corresponding tangent space, which requires the differentiability of the approximate solution in time.

{An equivalent formulation of the DLRA
is the Dynamically Orthogonal (DO) approximation \cite{sapsis2009dynamically}. This strategy can approximate random time-dependent equations by using a sum of a small number of products between spatial and stochastic basis functions, thus leading to a low-rank approximation with explicit parametrization of the low-rank factors. 
In the context of 
random partial differential equations (RPDEs), the DO/DLRA methodology has been successfully applied in several problems, showing promising
computational results  \cite{choi2014equivalence,cheng2013dynamically,cheng2013dynamicallyII,feppon2018dynamically, musharbash2015error,musharbash2018dual,musharbash2020symplectic,kazashi2021stability, sapsis2009dynamically}. }

{The goal of this work is to set a theoretical framework of DLRA for SDEs. 
	This is a challenging task, because, unlike the RPDEs setting, the differentiability in time of the solution cannot be exploited due to the presence of the rough diffusion term. 
	Our approach is to first define DO dynamics for a general SDE, specifying conditions for the well-posedness of this framework. 
	From this DO system, we derive an equation that is independent of the parametrization. We define DLRA for SDEs as the solution to this parameter-independent equation, and show that these DO and DLRA methodologies are equivalent. 
}

The DO approximation is given by the solution of a system of equations, which provide the evolution of the deterministic and stochastic basis functions. This system is however highly nonlinear, even if the original dynamics was linear. In addition, its well-posedness is not obvious due to the presence of the inverse of a Gramian, which is not guaranteed to exist at all times. Moreover, in the specific case of SDEs, the DO solution depends on the law of the whole process at every instant of time. These features make the well-posedness study challenging and not-standard, questioning also whether the existence of solutions is global or only local and, in the latter case, what happens to the solution at the explosion time. Besides being of interest in itself, a well-posedness study of the DO equations is also important to derive consistent and stable time discretization schemes, which are eventually needed to apply these techniques in real-life problems.

For RPDEs, theoretical results on the existence and
uniqueness of the solution of the DO equations \cite{kazashi2021existence}, as well as
stability and error estimates of time discretization schemes \cite{kazashi2021stability,musharbash2015error,vidlickova2022dynamical} are available. 
An existence and uniqueness result is also available for the DLR approximation of two-dimensional deterministic PDEs \cite{bachmayr2021existence}, where the sum involves products of basis functions for the different spatial variables. 
In contrast, for SDEs well-posedness results remain largely unexplored. {This lack of theory for SDEs is unfortunate, since DO approximations are very appealing and, despite no results on its well-posedness, have been also used in several problems modeled by SDEs (see e.g. \cite{sapsis2009dynamically,sapsis2013blended,sapsis2013blending,ueckermann2013numerical}). The reader can find other approaches of initiating a DLRA framework starting from the Fokker-Plank equation in \cite{sapsis2009dynamically,cao2018stochastic}.}

This paper aims at filling this gap between application and theory by:
\begin{itemize}
	\item  (Re-)deriving the DO equations for SDEs. We also derive the corresponding parametrization independent DLRA equation for SDEs.
	\item Showing the equivalence between DO approximation and DLRA. By exploiting this equivalence, we show that the DO equations and the DLRA equation are well-posed.
	\item Characterising the finite explosion time in terms of the linear independence of the stochastic DO basis.
	\item Discussing the extension of DLRA beyond the explosion time. We also provide a sufficient condition under which the DO/DLRA solution exists globally.
\end{itemize}

More precisely, we start by revisiting the DO equations for SDEs, which were originally derived in  \cite{sapsis2009dynamically} relying on the formal assumption that the DO solution is time differentiable, a property that SDE solutions do not possess. 
Deriving DO equations without using the time derivatives is a fundamental challenge, since the essence of the usual DLRA methodology consists in projecting the time derivative onto the tangent space of the low-rank manifold. 
We overcome this challenge by pushing the differentiability to the spatial basis, while the stochastic basis remains non-differentiable.  
It turns out that this alternative approach leads to the same DO equations as in \cite{sapsis2009dynamically}, supporting their validity as a correct dynamical low-rank approximation.

In the DLRA literature \cite{koch2007dynamical,kieri2016discretized}, it is well known by now that, when applied to RPDEs, the DO formulation is just a specific parametrization of the DLRA, the parameters being the deterministic and stochastic time-dependent bases defining the DO solution \cite{musharbash2015error,kazashi2021existence}.
In this paper, we derive an analogous result for SDEs. In particular, starting from the DO equations, we derive a parameter independent low-rank approximation which we name DLRA for SDEs, and show its equivalence with the DO formulation. The main result of this work is to prove local existence and uniqueness to both DO and DLRA equations.
As a part of our existence result, we give a characterisation of the interval $[0,T_e)$ on which the solution exists. 
More specifically, we show that, if the \emph{explosion time} $T_e$ is finite, then the stochastic DO basis must become linearly dependent at $T_e$. 

Although the characterisation of the explosion time provides us a valuable insight into the DO solution, it is not satisfactory from the practical point of view; if the true solution still exists at $T_e$ and beyond, we expect any sensible approximation to exist as well. Under some mild integrability condition on the initial datum, we show that, while the DO solution ceases to exist at the explosion time $T_e$, the DLRA can be continuously extended up to $T_e$ and beyond. 
Thus, our findings offer insights on how to continue a DO approximation \emph{beyond} the explosion time $T_e$. 
As a final result, we show that a sufficient condition for global existence of a DO solution (i.e.\ $T_e =  + \infty$) is that the noise in the SDE is non-degenerate.

{In passing, we point out that the DO/DLRA equation derived in this work cannot be regarded as a trivial application of the standard theory of SDEs on manifolds (see e.g.\ \cite{gliklikh2010global,hsu2002stochastic}). Indeed, the projection operator employed in the DO decomposition is not standard, as it also involves a purely stochastic term that depends on the average of all paths. }

The rest of the paper is organised as follows. 
Section~\ref{sec: DO eq} introduces the DO equations, together with theoretical justifications for why they are sensible. Furthermore, it introduces a DLRA formulation that does not depend on the parametrization.
Section~\ref{sec: well-posed} concerns the local well-posedness of the DO equations, where we also show the continuity of the solution with respect to the initial data. In Section~\ref{sec: maximality}, we study the behaviour of the solution up to and beyond the explosion time $T_e$. In Section 5 we draw some conclusions and perspectives.

\section{The DO equations}\label{sec: DO eq}
Let $(\Omega,\mathscr{F},\bbP;(\scrF_{t})_{t\geq0})$ be a filtered complete 
probability space with the usual conditions; see for example \cite[Remark 6.24]{schilling2021brownian}.
Consider the stochastic
differential equation (SDE)
\begin{equation}
	X^{\mathrm{true}}(t)=X_{0}^{\mathrm{true}}+\int_{0}^{t}a(s,X^{\mathrm{true}}(s))\,ds+\int_{0}^{t}b(s,X^{\mathrm{true}}(s))\,
	dW_s
	,\label{eq:true-eq}
\end{equation}
where 
$W_t=(W^{1}_t,\dots,W^{m}_t)^{\top}$ 
is a standard $m$-dimensional
$(\scrF_{t})$-Brownian motion.
Here, we used the notation $X^{\mathrm{true}}(t,\omega)=(X_{1}^{\mathrm{true}}(t,\omega),\dots,X_{d}^{\mathrm{true}}(t,\omega))^{\top}\in\mathbb{R}^{d}$ for $t \geq 0$ and $\omega \in \Omega$. Let  $|\cdot|$ and $\|\cdot\|_{\mathrm{F}}$ denote 
the Euclidean norm and the Frobenius norm, respectively.
We work with the following general assumptions {on the coefficients of the SDE \eqref{eq:true-eq}}. 
\begin{assumption}\label{assump:Lip}
	The drift coefficient  $a\colon[0,\infty)\times\mathbb{R}^{d}\to\mathbb{R}^{d}$ 
	and the diffusion coefficient  $b\colon[0,\infty)\times\mathbb{R}^{d}\to\mathbb{R}^{d\times m}$
    {are jointly measurable and} Lipschitz continuous with respect to the second variable, uniformly in time, i.e.
	\begin{equation}
		\begin{cases}
			\!\!\!\! & |a(s,x)-a(s,y)|\leq C_{\mathrm{Lip}}|x-y|\\
			\!\!\!\! & \|b(s,x)-b(s,y)\|_{\mathrm{F}}\leq C_{\mathrm{Lip}}|x-y|,
		\end{cases}\label{eq:lip}
	\end{equation}
	for some constant $C_{\mathrm{Lip}}>0$. 
\end{assumption}
\begin{assumption}\label{assump:lin-growth}
	The drift $a$
	and the diffusion $b$ satisfy the following linear-growth bound condition {uniformly in time:}
	\begin{equation}
		|a(s,x)|^{2}+\|b(s,x)\|_{\mathrm{F}}^{2}\leq C_{\mathrm{lgb}}(1+|x|^{2})\label{eq:lin-growth}
	\end{equation}
	for some constant $C_{\mathrm{lgb}}>0$.
\end{assumption}
{The choice of the Frobenius norm in Assumption~\ref{assump:Lip} is made primarily for convenience when utilizing Ito's isometry; this is standard practice. 
Since we are in a finite-dimensional setting, the results presented in this work remain valid under Lipschitz continuity with respect to other norms, with a modified constant. 
It is also worth mentioning that in the infinite-dimensional case, Lipschitz continuity is typically considered with respect to the Hilbert–Schmidt norm between appropriate spaces; see for example \cite{da2014stochastic}; hence our choice of the Frobenius norm is a first step in this direction.}
Furthermore, we assume that the initial condition $X_{0}^{\mathrm{true}}$ in \eqref{eq:true-eq} satisfies the following:
\begin{assumption}
	\begin{equation}\label{eq:initial value}
		X^{\mathrm{true}}_0 \mbox{ is } \mathcal{F}_0\mbox{-measurable and satisfies } \mathbb{E}[|X^{\mathrm{true}}_0|^2] < +\infty.
	\end{equation}
\end{assumption}
Under these assumptions, equation \eqref{eq:true-eq} has a unique strong solution; see for example \cite[Theorem 21.13]{schilling2021brownian}.

Let us consider a positive integer $R$ such that $R\leq d$. To numerically approximate \eqref{eq:true-eq}, in this work we consider dynamically orthogonal approximations of the form
\begin{equation} X^{\mathrm{true}}\approx
	X:=\boldsymbol{U}^{\top}\boldsymbol{Y}:=\sum_{j=1}^{R}U^{j}Y^{j}\in\mathbb{R}^d,\label{eq:DO-form}
\end{equation}
where $\boldsymbol{U} = \left(\boldsymbol{U}_t\right)_{t\in[0,T]}$ is a deterministic absolutely continuous matrix-valued function that gives a matrix in $\mathbb{R}^{R \times d}$ {with orthogonal rows} for all $t$, whereas $\boldsymbol{Y}=(\boldsymbol{Y}_{t})_{t\in[0,T]}$ is an Itô process with values in $\mathbb{R}^{R}$ with linearly independent components for all $t$. 
It is worth pointing out that for all $t \in [0,T]$ and $\omega \in \Omega$, the approximate process $X_t(\omega)$ belongs to an $R$ dimensional vector space $\mathrm{span}\{U^{1}_t, \dots, U^{R}_t\}$ spanned by the rows $U^{1}_t,\dots, U^{R}_t$ of $\boldsymbol{U}_{t}$, whereas each component $X^j_t, j=1,\dots, d$, belongs to  $\mathrm{span}\{Y_{t}^{1},\dots,Y_{t}^{R}\}$. 

Approximations of the form \eqref{eq:DO-form} for SDEs have been considered already in \cite{sapsis2009dynamically}, where the following evolution equations, hereafter called \textit{DO equations} were derived for the factors $(\boldsymbol{U},\boldsymbol{Y})$ by {using the same formal procedure as for ODEs via treating the process $\boldsymbol{Y}$ as differentiable in time:}
\begin{align}
	{C}_{\boldsymbol{Y}_{t}}\dot{\boldsymbol{U}}_{t} & =\mathbb{E}[\boldsymbol{Y}_{t}a(t,\boldsymbol{U}_{t}^{\top}\boldsymbol{Y}_{t})^{\top}](I_{d\times d}-P_{\boldsymbol{U}_{t}}^{\mathrm{row}}),\label{eq:DLR-eq-U}\\
	d\boldsymbol{Y}_{t} & =\boldsymbol{U}_{t}a(t,\boldsymbol{U}_{t}^{\top}\boldsymbol{Y}_{t})\,dt+\boldsymbol{U}_{t}b(t,\boldsymbol{U}_{t}^{\top}\boldsymbol{Y}_{t})dW_{t}.\label{eq:DLR-eq-Y}
\end{align}
Here, we let  ${C}_{\boldsymbol{Y}_{t}}:=\mathbb{E}[\boldsymbol{Y}_{t}\boldsymbol{Y}_{t}{}^{\top}]$ 
whereas $P_{\boldsymbol{U}_{t}}^{\mathrm{row}} \in\mathbb{R}^{d\times d}$ denotes the projection-matrix onto the row space $\mathrm{span}\{U_{t}^{1},\dots,U_{t}^{R}\}\subset\mathbb{R}^{d}$
of $\boldsymbol{U}_{t}$; when $\boldsymbol{U}_{t}$ has orthonormal rows, one has $P_{\boldsymbol{U}_{t}}^{\mathrm{row}}= \boldsymbol{U}_{t}^{\top}\boldsymbol{U}_{t} $. 
In Section \ref{sec: Consistency}, we will give a rigorous justification for \eqref{eq:DLR-eq-U} and \eqref{eq:DLR-eq-Y}. 

Having stated these equations, we now define {our notion of} strong DO solution and DO approximation for an SDE problem of the type \eqref{eq:true-eq}.

\begin{defn}[Strong DO solution]\label{def: D0 sol}
A function $(\boldsymbol{U},\boldsymbol{Y}) : [0,T] \to \mathbb{R}^{R\times d} \times L^{2}(\Omega;\mathbb{R}^{R})$ is called a \textit{strong DO solution} for \eqref{eq:true-eq} if the following conditions are satisfied:
	\begin{enumerate}
		\item {the initial conditions $(\boldsymbol{U}_0,\boldsymbol{Y}_0 )$ are such that $\boldsymbol{U}_0  \in \mathbb{R}^{R\times d}$ is a matrix with orthonormal rows and $\boldsymbol{Y}_0  \in L^{2}(\Omega;\mathbb{R}^{R})$ has linearly independent components};
		\item the curve $t \to \boldsymbol{U}_t \in \mathbb{R}^{R\times d}$ is absolutely continuous on $[0,T]$ and $\boldsymbol{U}_t\dot{\boldsymbol{U}}^{\top}_t = 0 \in \mathbb{R}^{R \times R}$ for a.e. $t \in [0,T]$;
\item the curve $t \to \boldsymbol{Y}_t(\omega) \in  \mathbb{R}^{R}$ has almost surely continuous paths on $[0,T]$ and it is $\scrF_{t}$-measurable for all $t \in [0,T]$. Moreover,  for any $t \in [0,T]$ the components $Y^{1}_t,\dots,Y^{R}_t$ are linearly independent in $L^2(\Omega)$;
\item $\boldsymbol{U}$ satisfies equation \eqref{eq:DLR-eq-U} for a.e.~$t \in [0,T]$ and $\boldsymbol{Y}$ is a strong solution of \eqref{eq:DLR-eq-Y} 
on $[0,T]$.
	\end{enumerate}
\end{defn}
{We remark that points (1) and (2) in Definition \ref{def: D0 sol} imply that $\boldsymbol{U}$ has orthonormal rows for all $t \in [0,T]$, i.e.\  $\boldsymbol{U}_t\boldsymbol{U}^{\top}_t = I \in \mathbb{R}^{R \times R}$ for all $t \in [0,T]$. 
Furthermore, the term ``strong'' in Definition \ref{def: D0 sol} follows the standard terminology for the solution of SDEs: we seek a solution on the given probability space and Brownian motion. See for example \cite{karatzas2012brownian} for more details.}
For convenience, given a DO solution $(\boldsymbol{U},\boldsymbol{Y})$ we call the product $\boldsymbol{U}^{\top}\boldsymbol{Y}$ a DO approximation.
\begin{defn}[DO approximation]\label{def: DLR sol}
	We call a process $X : [0,T] \to L^{2}(\Omega;\mathbb{R}^{d})$ a \emph{DO approximation} of \eqref{eq:true-eq} if there exists a strong DO solution $(\boldsymbol{U},\boldsymbol{Y})$  such that $X_t := \boldsymbol{U}_t^{\top}\boldsymbol{Y}_t$ for all $t \in [0,T]$.
\end{defn}
Given a DO approximation $X$, the corresponding DO solution $(\boldsymbol{U},\boldsymbol{Y})$ is determined only up to a (process of) rotation matrix.
Indeed, let $(\boldsymbol{U},\boldsymbol{Y})$ and $(\tilde{\boldsymbol{U}}, \tilde{\boldsymbol{Y}})$ be two strong DO solutions such that  $\boldsymbol{U}_t^{\top}\boldsymbol{Y}_t = \tilde{\boldsymbol{U}}^{\top}_t\tilde{\boldsymbol{Y}}_t = X_t$  for all $t\ge0$. Then {the orthogonality of the rows }of $\boldsymbol{U}_t$ implies $\tilde{\boldsymbol{Y}}_t = O_t \boldsymbol{Y}_t$ with $O_t = \tilde{\boldsymbol{U}}_t\boldsymbol{U}_t^{\top}$. 
The matrix $O_t$ is orthogonal for every $t \in [0,T]$, but not necessarily an identity.
See Section~\ref{sec:equiv-DO-DLR}, in particular Proposition~\ref{prop: equiv of param}, for more details.
\subsection{Consistency of the DO equations}\label{sec: Consistency}
In this section, we rigorously show the consistency of the DO equations \eqref{eq:DLR-eq-U} and \eqref{eq:DLR-eq-Y} in the sense described hereafter. 
Assume that the exact solution of \eqref{eq:true-eq} is of the form $X=\sum_{j=1}^{R}U^{j}Y^{j}$
with deterministic function $\boldsymbol{U}_{t}=(U_{t}^{1},\dots,U_{t}^{R})^{\top}\in\mathbb{R}^{R\times d}$
and an Itô process $\boldsymbol{Y}_{t}(\omega)=(Y_{t}^{1}(\omega),\dots,Y_{t}^{R}(\omega))^{\top}\in[L^{2}(\Omega)]^{R}$
for some $R\le d$ that fulfil the following properties:
\begin{enumerate}
	\item the function $[0,T]\ni t\mapsto\boldsymbol{U}_{t}\in\mathbb{R}^{R\times d}$,
	$d\geq R$, is absolutely continuous on $[0,T]$ and satisfies $\boldsymbol{U}_{t}\dot{\boldsymbol{U}}_{t}^{\top}=0\in\mathbb{R}^{R\times R}$
	for almost every $t\in [0,T]$;
	moreover, $\boldsymbol{U}_{t}\boldsymbol{U}_{t}^{\top}=I\in\mathbb{R}^{R\times R}$
	for almost every $t \in [0,T]$, where $I$ is the identity matrix;
	\item $\boldsymbol{Y}=(\boldsymbol{Y}_{t})_{t\in[0,T]}$ is an Itô process
	\begin{equation}
		d\boldsymbol{Y}_{t}=\alpha_{t}dt+\beta_{t}dW_{t},\label{eq:anzats-dY}
	\end{equation}
	with coefficients $\alpha_{t}\in\mathbb{R}^{R}$ and $\beta_{t}\in\mathbb{R}^{R\times m}$,
	where $\alpha_{t}$ and $\beta_{t}$ are progressively measurable
	and have a continuous path almost surely.
\end{enumerate}
Then $(\boldsymbol{U}_t,\boldsymbol{Y}_t)$ must satisfy equations \eqref{eq:DLR-eq-U} and \eqref{eq:DLR-eq-Y}.

Indeed, since $\alpha_{t}$ and $\beta_{t}$ in \eqref{eq:anzats-dY}
are progressively measurable, by applying Itô's formula \cite{ito1951formula,kunita1967square}
we have
\begin{align}\label{eq:ito applied to trueeq}
	dX_{t} & =  (d\boldsymbol{U}_{t}^{\top})\boldsymbol{Y}_{t}+\boldsymbol{U}_{t}^{\top}d\boldsymbol{Y}_{t}+\sum_{j=1}^{R}d\langle U^{j},Y^{j}\rangle(t)\\
	& =  \dot{\boldsymbol{U}}_{t}^{\top}\boldsymbol{Y}_{t}\,dt+\boldsymbol{U}_{t}^{\top}\bigl(\alpha_{t}dt+\beta_{t}dW_{t}\bigr)+0\cdot\beta_{t}dt\\
	& =  \bigl(\dot{\boldsymbol{U}}_{t}^{\top}\boldsymbol{Y}_{t}+\boldsymbol{U}_{t}^{\top}\alpha_{t}\bigr)\,dt+\boldsymbol{U}_{t}^{\top}\beta_{t}\,dW_{t},
\end{align}
where $\langle U^{j},Y^{j}\rangle$, $j=1,\dots,R$ is the quadratic
covariation of $U^{j}$ and $Y^{j}$. Since $X$ is assumed to satisfy
\eqref{eq:true-eq}, the uniqueness of the representation of Itô processes
implies
\begin{align}
	\dot{\boldsymbol{U}}_{t}{}^{\top}\boldsymbol{Y}_{t}+\boldsymbol{U}_{t}^{\top}\alpha_{t} & =a(t,\boldsymbol{U}_{t}^{\top}\boldsymbol{Y}_{t})\label{eq:eq-bias}\\
	\boldsymbol{U}_{t}^{\top}\beta_{t} & =b(t,\boldsymbol{U}_{t}^{\top}\boldsymbol{Y}_{t}).\label{eq:eq-cov}
\end{align}
Now, using $\boldsymbol{U}_{t}\dot{\boldsymbol{U}}_{t}^{\top}=0\in\mathbb{R}^{R\times R}$
and $\boldsymbol{U}_{t}\boldsymbol{U}_{t}^{\top}=I\in\mathbb{R}^{R\times R}$ for almost every $t \in [0,T]$,
these equalities imply
\begin{equation}\label{eq: alpha and beta}
	\alpha_{t}=\boldsymbol{U}_{t}a(t,\boldsymbol{U}_{t}^{\top}\boldsymbol{Y}_{t})\quad\text{and}\quad\beta_{t}=\boldsymbol{U}_{t}b(t,\boldsymbol{U}_{t}^{\top}\boldsymbol{Y}_{t}).
\end{equation}
Moreover, $\boldsymbol{U}_t$ is absolutely continuous and by assumption $\boldsymbol{Y}_{t}$ is a.s.\ continuous. 
This implies that $\alpha_t$ and $\beta_t$ have a continuous path almost surely, and hence \eqref{eq:DLR-eq-Y} follows.
In turn, from \eqref{eq:eq-bias} we find
\begin{align*}
	{C}_{\boldsymbol{Y}_{t}}\dot{\boldsymbol{U}}_{t} & =\mathbb{E}[\boldsymbol{Y}_{t}a(t,\boldsymbol{U}_{t}^{\top}\boldsymbol{Y}_{t})^{\top}]-\mathbb{E}[\boldsymbol{Y}_t a(t,\boldsymbol{U}_{t}^{\top}\boldsymbol{Y}_{t})^{\top}]\boldsymbol{U}_{t}^{\top}\boldsymbol{U}_{t}\\
	& =\mathbb{E}[\boldsymbol{Y}_{t}a(t,\boldsymbol{U}_{t}^{\top}\boldsymbol{Y}_{t})^{\top}](I_{d\times d}-\boldsymbol{U}_{t}^{\top}\boldsymbol{U}_{t}),
\end{align*}
with ${C}_{\boldsymbol{Y}_{t}}:=\mathbb{E}[\boldsymbol{Y}_{t}\boldsymbol{Y}_{t}{}^{\top}]$.
Hence, from the orthogonality assumption {for the rows} of $\boldsymbol{U}$ we
have
\[
{C}_{\boldsymbol{Y}_{t}}\dot{\boldsymbol{U}}_{t}=\mathbb{E}[\boldsymbol{Y}_{t}a(t,\boldsymbol{U}_{t}^{\top}\boldsymbol{Y}_{t})^{\top}](I_{d\times d}-P_{\boldsymbol{U}_{t}}^{\mathrm{row}}).
\]
This completes our consistency argument.

\subsection{DO equations interpreted as a projected dynamics}\label{sec: proj dyn}

The DO equations \eqref{eq:DLR-eq-U}--\eqref{eq:DLR-eq-Y} are posed as a system of equations for the separate factors $\boldsymbol{U}$  and $\boldsymbol{Y}$.  
We now discuss what equation the DO approximation $X=\sum_{j=1}^{R}U^{j}Y^{j}$ should satisfy. 
In other words, we aim to derive an equation for $X$ in the ambient space,  independent of the parametrization $(\boldsymbol{U},\boldsymbol{Y})$.  

For this purpose, we substitute \eqref{eq:DLR-eq-U} and \eqref{eq:DLR-eq-Y} into \eqref{eq:ito applied to trueeq}; we obtain
\begin{equation}\label{eq:eq-manifold0}
	\begin{aligned}
		dX_{t} = &\bigl(\bigl(I_{d\times d}-P_{\boldsymbol{U}_{t}}^{\mathrm{row}}\bigr)\mathbb{E}[a(t,\boldsymbol{U}_{t}^{\top}\boldsymbol{Y}_{t})\boldsymbol{Y}_{t}^{\top}]{C}_{\boldsymbol{Y}_{t}}^{-1}\boldsymbol{Y}_{t}+P_{\boldsymbol{U}_{t}}^{\mathrm{row}}a(t,\boldsymbol{U}_{t}^{\top}\boldsymbol{Y}_{t})\bigr)\,dt \\
		&+P_{\boldsymbol{U}_{t}}^{\mathrm{row}}b(t,\boldsymbol{U}_{t}^{\top}\boldsymbol{Y}_{t})\,dW_{t}\\
		 =& \bigl(\bigl(I_{d\times d}-P_{\boldsymbol{U}_{t}}^{\mathrm{row}}\bigr)[P_{\boldsymbol{Y}_{t}}a(t,\boldsymbol{U}_{t}^{\top}\boldsymbol{Y}_{t})]+P_{\boldsymbol{U}_{t}}^{\mathrm{row}}a(t,\boldsymbol{U}_{t}^{\top}\boldsymbol{Y}_{t})\bigr)\,dt \\
		 &+P_{\boldsymbol{U}_{t}}^{\mathrm{row}}b(t,\boldsymbol{U}_{t}^{\top}\boldsymbol{Y}_{t})\,dW_{t},
	\end{aligned}
\end{equation}
\sloppy
where $P_{\boldsymbol{Y}_{t}}a(t,X_{t})\in\mathbb{R}^{d}$ is the
application of the $L^{2}(\Omega)$-orthogonal projection $P_{\boldsymbol{Y}_{t}}\colon L^{2}(\Omega)$ $\to\text{span}\{Y_{t}^{1},\dots,Y_{t}^{R}\}$
to each component of $a(t,X_{t})\in\mathbb{R}^{d}$. 
To derive a parameter-independent equation, we seek a parameter-free expression of the projections $P_{\boldsymbol{U}_{t}}$ and $P_{\boldsymbol{Y}_{t}}$.
\sloppy
Given $\mathcal{X}_{t}\in L^{2}(\Omega;\mathbb{R}^{d})$, $t\in[0,T]$,
let $\mathscr{P}_{\mathcal{U}(\mathcal{X}_t)}\colon\mathbb{R}^{d}\to\mathrm{Im}(\mathbb{E}[\mathcal{X}_t\,\cdot\,])\subset\mathbb{R}^{d}$
be the orthogonal projection matrix to the image of the mapping $\mathbb{E}[\mathcal{X}_t\,\cdot\,]\colon L^{2}(\Omega)\to\mathbb{R}^{d}$,
and let 
$\mathscr{P}_{\mathcal{Y}(\mathcal{X}_t)}\colon L^{2}(\Omega)\to\mathrm{Im}\bigl((\mathcal{X}^{\top}_t\cdot\,)\bigr)\subset L^{2}(\Omega)$
be the $L^{2}(\Omega)$-orthogonal projection to the image of the
mapping $\mathcal{X}^{\top}_t\colon\mathbb{R}^{d}\to L^{2}(\Omega)$.
In Lemma~\ref{lem:same-rank} we will show  $\mathrm{dim}(\mathrm{Im}\bigl((\mathcal{X}^{\top}_t\cdot\,))=\mathrm{dim}(\mathrm{Im}(\mathbb{E}[\mathcal{X}_t\,\cdot\,]))$, i.e.,  $\mathrm{Im}\bigl((\mathcal{X}^{\top}_t\cdot\,)$ is a finite-dimensional linear subspace of $L^{2}(\Omega)$, and thus $\mathscr{P}_{\mathcal{Y}(\mathcal{X}_t)}$ is well defined.

\sloppy
In the case of DO approximation $X_t = \boldsymbol{U}_{t}^{\top}\boldsymbol{Y}_{t}$, we have $\mathrm{Im}(\mathbb{E}[X_t\,\cdot\,]) = \mathrm{span}\{U^{1}_t, \dots, U^{R}_t\}$ and $\mathrm{Im}\bigl((X_t^{\top}\cdot\,)\bigr) = \mathrm{span}\{Y^1_{t}, \dots, Y^R_{t}\}$, we conclude that $\mathscr{P}_{\mathcal{U}(X_t)} v= \boldsymbol{U}_{t}^{\top}\boldsymbol{U}_{t} v =P_{\boldsymbol{U}_{t}}^{\mathrm{row}}v$, $v \in \mathbb{R}^d$ and $\mathscr{P}_{\mathcal{Y}(X_t)}w = \mathbb{E}[w \boldsymbol{Y}^{\top}_{t}]C^{-1}_{\boldsymbol{Y}_{t}}\boldsymbol{Y}_{t}=P_{\boldsymbol{Y}_{t}} w$, $w \in L^2(\Omega)$, so that \eqref{eq:eq-manifold0} can be rewritten as
\begin{equation}\label{eq:eq-manifold DO} 
	\begin{aligned}
	dX_{t} = &\bigl(\bigl(I_{d\times d}-\mathscr{P}_{\mathcal{U}( X_t)}\bigr)[\mathscr{P}_{\mathcal{Y}(X_t)}a(t, \boldsymbol{U}_{t}^{\top}\boldsymbol{Y}_{t})]+\mathscr{P}_{\mathcal{U}(X_t)}a(t, \boldsymbol{U}_{t}^{\top}\boldsymbol{Y}_{t})\bigr)\,dt\\
	&+\mathscr{P}_{\mathcal{U}(X_t)}b(t, \boldsymbol{U}_{t}^{\top}\boldsymbol{Y}_{t})\,dW_{t}.
 \end{aligned}
\end{equation}
This equation, derived from \eqref{eq:DLR-eq-U} and \eqref{eq:DLR-eq-Y}, does not depend on the parametrization of the pair $(\boldsymbol{U},\boldsymbol{Y})$ and could be taken as an alternative definition of DO approximation. 
More precisely, given any process $\mathcal{X} = (\mathcal{X}_t)_{t \in [0,T]}$ with $\mathcal{X}_{t}\in L^{2}(\Omega;\mathbb{R}^{d})$ for $t\in[0,T]$, we can define the following stochastic process:
\begin{equation}
	d\mathcal{\mathcal{X}}_{t}=\bigl(\bigl(I_{d\times d}-\mathscr{P}_{\mathcal{U}(\mathcal{X}_t)}\bigr)[\mathscr{P}_{\mathcal{Y}(\mathcal{X}_t)}a(t,\mathcal{\mathcal{X}}_{t})]+\mathscr{P}_{\mathcal{U}(\mathcal{X}_t)}a(t,\mathcal{X}_{t})\bigr)\,dt+\mathscr{P}_{\mathcal{U}(\mathcal{X}_t)}b(t,\mathcal{X}_{t})\,dW_{t}.
	\label{eq:eq-manifold DLR}
\end{equation}
It is worth noticing that \eqref{eq:eq-manifold DLR} is a McKean-Vlasov-type SDE since the evolution of $\mathcal{X}_{t}$ depends on the law of the process.

Finally, to speak of the \emph{rank} of the solution $\mathcal{X}$ to \eqref{eq:eq-manifold DLR} we note the following. 

Given $\tilde{\mathcal{X}}\in L^{2}(\Omega;\mathbb{R}^{d})$
the mapping $K_{\tilde{\mathcal{X}}}\colon\mathbb{R}^{d}\to L^{2}(\Omega)$
defined by $K_{\tilde{\mathcal{X}}}\boldsymbol{v}:=\tilde{\mathcal{X}}^{\top}\boldsymbol{v}$
for $\boldsymbol{v}\in\mathbb{R}^{d}$ is by definition finite rank.
Moreover, the operator $K_{\tilde{\mathcal{X}}}^{*}:=\mathbb{E}[\tilde{\mathcal{X}}\cdot]\colon L^{2}(\Omega)\to\mathbb{R}^{d}$
is the adjoint of $K_{\tilde{\mathcal{X}}}$:
\[
\mathbb{E}[y(K_{\tilde{\mathcal{X}}}\boldsymbol{v})]=\mathbb{E}[y(\tilde{\mathcal{X}}^{\top}\boldsymbol{v})]=\mathbb{E}[\boldsymbol{v}^{\top}\tilde{\mathcal{X}}y]=\boldsymbol{v}^{\top}K_{\tilde{\mathcal{X}}}^{*}y, \quad \forall y \in L^2(\Omega), \ \forall v \in \mathbb{R}^d
\]
The operator $K_{\tilde{\mathcal{X}}}^{*}K_{\tilde{\mathcal{X}}}\colon\mathbb{R}^{d}\to\mathbb{R}^{d}$
is given by $K_{\tilde{\mathcal{X}}}^{*}K_{\tilde{\mathcal{X}}}\boldsymbol{v}=\mathbb{E}[\tilde{\mathcal{X}}\tilde{\mathcal{X}}^{\top}]\boldsymbol{v}$
for $\boldsymbol{v}\in\mathbb{R}^{d}$. 
The following lemma characterises the rank of these  operators.
\begin{lem}
	\label{lem:same-rank}Given $\tilde{\mathcal{X}}\in L^{2}(\Omega;\mathbb{R}^{d})$,
	we have $\mathrm{rank}(K_{\tilde{\mathcal{X}}}^{*}K_{\tilde{\mathcal{X}}})=\mathrm{rank}(K_{\tilde{\mathcal{X}}})=\mathrm{rank}(K_{\tilde{\mathcal{X}}}^{*})$.
\end{lem}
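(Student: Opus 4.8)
The plan is to reduce the statement to two elementary Hilbert-space facts, exploiting that $K:=K_{\tilde{\mathcal{X}}}$ has finite-dimensional domain $\mathbb{R}^{d}$, so that it is automatically of finite rank and its range is a (finite-dimensional, hence closed) subspace of $L^{2}(\Omega)$. Write $r:=\mathrm{rank}(K)=\dim\mathrm{Im}(K)$; concretely, if $\tilde{\mathcal{X}}=(\tilde{\mathcal{X}}_{1},\dots,\tilde{\mathcal{X}}_{d})^{\top}$, then $\mathrm{Im}(K)=\mathrm{span}\{\tilde{\mathcal{X}}_{1},\dots,\tilde{\mathcal{X}}_{d}\}\subset L^{2}(\Omega)$ and $r\le d$.

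First I would prove $\mathrm{rank}(K_{\tilde{\mathcal{X}}}^{*}K_{\tilde{\mathcal{X}}})=\mathrm{rank}(K_{\tilde{\mathcal{X}}})$. Since $K^{*}K$ acts on the finite-dimensional space $\mathbb{R}^{d}$, by rank–nullity it suffices to show $\ker(K^{*}K)=\ker(K)$. The inclusion $\ker K\subseteq\ker(K^{*}K)$ is immediate; conversely, if $K^{*}Kv=0$ then $0=v^{\top}K^{*}Kv=\langle Kv,Kv\rangle_{L^{2}(\Omega)}=\|Kv\|_{L^{2}(\Omega)}^{2}$, hence $Kv=0$. Therefore $\ker(K^{*}K)=\ker(K)$ and $\mathrm{rank}(K^{*}K)=d-\dim\ker(K^{*}K)=d-\dim\ker(K)=\mathrm{rank}(K)=r$. (An equivalent route I could take here is to identify $K^{*}K=\mathbb{E}[\tilde{\mathcal{X}}\tilde{\mathcal{X}}^{\top}]$ with the Gram matrix of $\tilde{\mathcal{X}}_{1},\dots,\tilde{\mathcal{X}}_{d}$ in $L^{2}(\Omega)$ and invoke the classical fact that the rank of a Gram matrix equals the dimension of the span of the underlying vectors.)

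Next I would handle $K_{\tilde{\mathcal{X}}}^{*}$. From the standard characterisation $\ker(K)=\mathrm{Im}(K^{*})^{\perp}$ (valid for any bounded operator between Hilbert spaces; here $\mathrm{Im}(K^{*})\subseteq\mathbb{R}^{d}$, so the orthogonal complements below are taken in $\mathbb{R}^{d}$), taking orthogonal complements gives $(\ker K)^{\perp}=\overline{\mathrm{Im}(K^{*})}$. But $\mathrm{Im}(K^{*})$ is a subspace of the finite-dimensional space $\mathbb{R}^{d}$, hence closed, so in fact $\mathrm{Im}(K^{*})=(\ker K)^{\perp}$, and therefore $\mathrm{rank}(K^{*})=\dim(\ker K)^{\perp}=d-\dim\ker(K)=\mathrm{rank}(K)=r$. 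Combining the two steps yields $\mathrm{rank}(K_{\tilde{\mathcal{X}}}^{*}K_{\tilde{\mathcal{X}}})=\mathrm{rank}(K_{\tilde{\mathcal{X}}})=\mathrm{rank}(K_{\tilde{\mathcal{X}}}^{*})$, which is the claim.

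I do not expect a genuine obstacle: the argument is routine linear/functional analysis. The only point that requires a little care is that the codomain $L^{2}(\Omega)$ of $K_{\tilde{\mathcal{X}}}$ is infinite-dimensional, so one cannot apply rank–nullity to $K_{\tilde{\mathcal{X}}}^{*}$ blindly; this is circumvented once one observes that $K_{\tilde{\mathcal{X}}}$ is of finite rank, so that $\mathrm{Im}(K_{\tilde{\mathcal{X}}})$ and $\mathrm{Im}(K_{\tilde{\mathcal{X}}}^{*})=(\ker K_{\tilde{\mathcal{X}}})^{\perp}$ are finite-dimensional and closed, and all the finite-dimensional rank identities apply.
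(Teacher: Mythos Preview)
Your proof is correct and follows essentially the same elementary Hilbert-space route as the paper: both rely on the kernel--image orthogonality relation for a bounded operator and its adjoint, combined with rank--nullity on a finite-dimensional space. The minor differences are cosmetic: for $\mathrm{rank}(K^{*}K)=\mathrm{rank}(K)$ the paper argues that $K^{*}$ is injective on $\mathrm{Im}(K)$ (via $\ker K^{*}=\mathrm{Im}(K)^{\perp}$) and applies rank--nullity there, whereas you show $\ker(K^{*}K)=\ker(K)$ directly via the positivity identity $v^{\top}K^{*}Kv=\|Kv\|^{2}$ and apply rank--nullity on $\mathbb{R}^{d}$; for $\mathrm{rank}(K)=\mathrm{rank}(K^{*})$ the paper simply cites \cite[Theorem~III.4.13]{kato2013perturbation}, while you give the one-line direct argument $\mathrm{Im}(K^{*})=(\ker K)^{\perp}$ in $\mathbb{R}^{d}$. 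Your version is thus slightly more self-contained, but the underlying ideas coincide.
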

\begin{proof}
	First, notice that we have $\mathrm{ker}(K_{\tilde{\mathcal{X}}}^{*})=\mathrm{Im}(K_{\tilde{\mathcal{X}}})^{\perp}$,
	where $\perp$ is the orthogonal complement with respect to the Euclidean
	inner product. Hence, we have $\mathrm{ker}(K_{\tilde{\mathcal{X}}}^{*}|_{\mathrm{Im}(K_{\tilde{\mathcal{X}}})})=\{0\}$.
	Thus, the rank-nullity theorem implies
	\[
	\mathrm{dim}(\mathrm{Im}(K_{\tilde{\mathcal{X}}}^{*}|_{\mathrm{Im}(K_{\tilde{\mathcal{X}}})}))=\mathrm{dim}(\mathrm{Im}(K_{\tilde{\mathcal{X}}})).
	\]
	Therefore $\mathrm{rank}(K_{\tilde{\mathcal{X}}}^{*}K_{\tilde{\mathcal{X}}})=\mathrm{rank}(K_{\tilde{\mathcal{X}}})$.
	Moreover, from $\mathrm{rank}(K_{\tilde{\mathcal{X}}})=\mathrm{rank}(K_{\tilde{\mathcal{X}}}^{*})$
	(see for example \cite[Theorem III.4.13]{Kato.T_book_1995_reprint}),
	the proof is complete.
\end{proof}
In view of the lemma above, we call  $\mathrm{dim}(\mathrm{Im}(\mathbb{E}[\mathcal{X}_{t}\,\cdot\,]))$, equivalently $\mathrm{dim}(\mathrm{Im}((\mathcal{X}_{t}^{\top}\cdot\,)))$ and $\mathrm{rank}(\mathbb{E}[\mathcal{X}_{t}\mathcal{X}_{t}^\top]])$, rank of $\mathcal{X}_{t}$.
\begin{defn}[DLR solution of rank $R$]\label{def:DLRA-rank-R}
	A process $\mathcal{X} : [0,T] \to L^{2}(\Omega;\mathbb{R}^{d})$ is called a \emph{DLR solution of rank} $R$  to \eqref{eq:true-eq} for an initial datum $\mathcal{X}_0 \in L^2(\Omega;\mathbb{R}^{d})$ if it satisfies \eqref{eq:eq-manifold DLR}, 
	$\mathrm{dim}(\mathrm{Im}(\mathbb{E}[\mathcal{X}_t\,\cdot\,])) = R$ for some $R\in\mathbb{N}$ for all $t\in[0,T]$, 
	and $\mathcal{X}$ has almost surely continuous paths.
\end{defn}

The parameter-independent formulation \eqref{eq:eq-manifold DLR} corresponds to the projected dynamics in the DLRA literature; see \cite{koch2007dynamical}, also \cite[Proposition2]{kazashi2021stability}. 
Notice however that in our formulation, only the projector $\mathscr{P}_{\mathcal{U}(\mathcal{X}_t)}$ is applied to the diffusion term $b(t,\mathcal{X}_{t})\,dW_{t}$, instead of the full projector 
\[
\mathscr{P}_{\mathcal{U}(\mathcal{X}_t)}+\mathscr{P}_{\mathcal{Y}(\mathcal{X}_t)} -\mathscr{P}_{\mathcal{U}(\mathcal{X}_t)}\mathscr{P}_{\mathcal{Y}(\mathcal{X}_t)}.
\]
If we naively wrote the projected dynamics following the standard DLRA approach, we would end up with the alternative formal expression
\begin{equation}
	d\mathcal{\mathcal{X}}_{t}=\left(\mathscr{P}_{\mathcal{U}(\mathcal{X}_t)}+\mathscr{P}_{\mathcal{Y}(\mathcal{X}_t)} -\mathscr{P}_{\mathcal{U}(\mathcal{X}_t)}\mathscr{P}_{\mathcal{Y}(\mathcal{X}_t)}\right)\left[a(t,\mathcal{\mathcal{X}}_{t})\,dt + b(t,\mathcal{X}_{t})\,dW_{t} \right],\label{eq:eq-manifold2}
\end{equation}
which would coincide with \eqref{eq:eq-manifold DLR} if one could prove that ``$\mathscr{P}_{\mathcal{Y}(\mathcal{X}_t)}[b(t,\mathcal{X}_{t})\,dW_{t}]=0$'' for a.e. $t >0$. However, it is not obvious how to give a rigorous meaning to the term $\mathscr{P}_{\mathcal{Y}(\mathcal{X}_t)}[b(t,\mathcal{X}_{t})\,dW_{t}]$ as an Itô integral and for this reason we do not pursue the formulation \eqref{eq:eq-manifold2} further. 

To confirm that \eqref{eq:eq-manifold DLR} is nevertheless a sensible projected dynamics, let us ask ourselves the following question: if the solution $\mathcal{\mathcal{X}}$ of  \eqref{eq:eq-manifold DLR} satisfies
\[\mathrm{dim}(\mathrm{Im}((\mathcal{X}_{t}^{\top}\cdot\,)))=\mathrm{dim}(\mathrm{Im}(\mathbb{E}[\mathcal{X}_{t}\,\cdot\,]))= R\text{ around }t,\]
is the right hand side of \eqref{eq:eq-manifold DLR}  consistent with this structure of $\mathcal{\mathcal{X}}$? In the following, we will see that the answer is affirmative, which supports the validity of \eqref{eq:eq-manifold DLR}, and thus \eqref{eq:DLR-eq-U} and \eqref{eq:DLR-eq-Y}, as a correct DLRA formulation for SDEs. 

In view of Lemma~\ref{lem:same-rank}, we study the rank of the matrix $\mathbb{E}[\mathcal{X}_{t}\mathcal{X}_{t}^{\top}]$,
where $\mathcal{X}_{t}$ is given by \eqref{eq:eq-manifold DLR}. To do
this, we will show that $\frac{d}{dt}\big(\mathbb{E}[\mathcal{X}_{t}\mathcal{X}_{t}^{\top}]\big)$
is in a tangent space of the rank-$R$ manifold, i.e., the manifold of $d \times d$ matrices with rank equal to $R$, at $\mathbb{E}[\mathcal{X}_{t}\mathcal{X}_{t}^{\top}]$.
For $j,k=1,\dots,d$, Itô's formula implies
\begin{align*}
	d(\mathcal{X}_{t}^{j}\mathcal{X}_{t}^{k})= & \mathcal{X}_{t}^{j}\bigl[\mu_{t}^{k}\,dt+\sum_{\ell=1}^{m}\Sigma_{t}^{k\ell}\,dW_{t}^{\ell}\bigr]\\
	& +\bigl[\mu_{t}^{j}\,dt+\sum_{\ell=1}^{m}\Sigma_{t}^{j
		\ell}\,dW_{t}^{\ell}\bigr]\mathcal{X}_{t}^{k}+\sum_{\ell=1}^{m}\Sigma_{t}^{j\ell}\Sigma_{t}^{k\ell}\,dt,
\end{align*}
where $\mu_{t}^{k}:=[(I_{d\times d}-\mathscr{P}_{\mathcal{U}(\mathcal{X}_t)})[\mathscr{P}_{\mathcal{Y}(\mathcal{X}_t)}a(t,\mathcal{X}_{t})]+\mathscr{P}_{\mathcal{U}(\mathcal{X}_t)}a(t,\mathcal{X}_{t})]_{k}$
and $\Sigma_{t}^{k\ell}:=[\mathscr{P}_{\mathcal{U}(\mathcal{X}_t)}b(t,\mathcal{X}_{t})]_{k\ell}$.
Thus, we have
\begin{align}\label{eq:covariance}
	\frac{d}{dt}\Bigl(\mathbb{E}[\mathcal{X}_{t}\mathcal{X}_{t}^{\top}]\Bigr)=\mathbb{E}\bigl[\mathcal{X}_{t}\mu_{t}^{\top}+\mu_{t}\mathcal{X}_{t}^{\top}+\Sigma_{t}\Sigma_{t}^{\top}\bigr]\quad\text{for almost every }t\in[0,T],
\end{align}
with $\mu_{t}:=[\mu_{t}^{k}]_{k=1,\dots,d}\in\mathbb{R}^{d}$ and
$\Sigma_{t}:=\big[\Sigma_{t}^{k\ell}\big]_{\substack{k=1,\dots,d \\\ell=1,\dots m}}\in\mathbb{R}^{d\times m}$.

On the other hand, from \cite[Proposition 2.1]{vandereycken2013low}
we know that the tangent space of a rank $R$ manifold at $\mathbb{E}[\mathcal{X}\mathcal{X}^{\top}]=Q\mathrm{diag}(\gamma_{1},\dots,\gamma_{R})Q^{\top}$
can be characterized as
\[
T_{\mathbb{E}[\mathcal{X}\mathcal{X}^{\top}]}\mathcal{M}:=\left\{ QV_{1}{}^{\top}+V_{2}Q^{\top}+QAQ^{\top}\,\left\vert \,\begin{array}{l}
	V_{\mathrm{1}}^{\top}Q=0,V_{\mathrm{1}}\in\mathbb{R}^{d\times R},\\
	V_{2}^{\top}Q=0,V_{\mathrm{2}}\in\mathbb{R}^{d\times R},\\
	\text{ and }A\in\mathbb{\mathbb{R}}^{d\times d}
\end{array}\right\} \right..
\]
To conclude  
$\frac{d}{dt}(\mathbb{E}[\mathcal{X}_{t}\mathcal{X}_{t}^{\top}])\in T_{\mathbb{E}[\mathcal{X}_t\mathcal{X}_t^{\top}]}\mathcal{M}$ we will use the following result. 
\begin{lem}
	\label{lem:SVD-finite-rank-op}Let the singular value decomposition
	$\mathbb{E}[\mathcal{X}\mathcal{X}^{\top}]=Q\mathrm{diag}(\gamma_{1},\dots,\gamma_{R})Q^{\top}$,
	with $\gamma_{1},\dots,\gamma_{R}>0$ be given, where $Q\in\mathbb{R}^{d\times R}$
	is a matrix consisting of $R$ orthogonal columns $\boldsymbol{q}_{1},\dots,\boldsymbol{q}_{R}\in\mathbb{R}^{d}$.
	Then, the canonical expansion of the finite rank operator $\mathbb{E}[\mathcal{X}\,\cdot\,]\colon L^{2}(\Omega)\to\mathbb{R}^{d}$
	is given by
	\[
	\mathbb{E}[\mathcal{X}y]=\sum_{k=1}^{R}\gamma_{k}^{1/2}\mathbb{E}\bigl[y\,\varphi_{k}\bigr]\boldsymbol{q}_{k}=Q\begin{bmatrix}\gamma_{1}^{1/2}\mathbb{E}[y\,\varphi_{1}]\\
		\vdots\\
		\gamma_{R}^{1/2}\mathbb{E}[y\,\varphi_{R}]
	\end{bmatrix},\text{ for }y\in L^{2}(\Omega),
	\]
	where $\varphi_{k}:=\gamma_{k}^{-1/2}\mathcal{X}^{\top}\boldsymbol{q}_{k}$,
	$k=1,\dots,R$ is an orthonormal basis of {$\mathrm{Im}\bigl((\mathcal{X}^{\top}\cdot\,)\bigr)\subset L^{2}(\Omega)$.}
\end{lem}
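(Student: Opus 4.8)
The plan is to obtain the canonical (singular value) expansion of the finite-rank operator $\mathbb{E}[\mathcal{X}\,\cdot\,]$ from the spectral decomposition of the self-adjoint positive semidefinite operator $\mathbb{E}[\mathcal{X}\mathcal{X}^{\top}] = K_{\mathcal{X}}^{*}K_{\mathcal{X}}$ on $\mathbb{R}^{d}$ (here $K_{\mathcal{X}}$ and $K_{\mathcal{X}}^{*}$ denote the operators $(\mathcal{X}^{\top}\cdot\,)$ and $\mathbb{E}[\mathcal{X}\,\cdot\,]$ of the preceding discussion), following the standard recipe for building an SVD of a finite-rank operator, but paying attention to the fact that the domain of $K_{\mathcal{X}}$ is $\mathbb{R}^{d}$ while its codomain is $L^{2}(\Omega)$. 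By hypothesis $K_{\mathcal{X}}^{*}K_{\mathcal{X}} = Q\,\mathrm{diag}(\gamma_{1},\dots,\gamma_{R})\,Q^{\top}$ with $\gamma_{1},\dots,\gamma_{R}>0$, so $\boldsymbol{q}_{1},\dots,\boldsymbol{q}_{R}$ are orthonormal eigenvectors with $\mathbb{E}[\mathcal{X}\mathcal{X}^{\top}]\boldsymbol{q}_{k} = \gamma_{k}\boldsymbol{q}_{k}$, and $K_{\mathcal{X}}^{*}K_{\mathcal{X}}$ annihilates $\mathrm{span}\{\boldsymbol{q}_{1},\dots,\boldsymbol{q}_{R}\}^{\perp}$.

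The next step is to define $\varphi_{k} := \gamma_{k}^{-1/2}\mathcal{X}^{\top}\boldsymbol{q}_{k} = \gamma_{k}^{-1/2}K_{\mathcal{X}}\boldsymbol{q}_{k}\in L^{2}(\Omega)$ and to verify orthonormality directly: since $\mathbb{E}[(\mathcal{X}^{\top}\boldsymbol{q}_{j})(\mathcal{X}^{\top}\boldsymbol{q}_{k})] = \boldsymbol{q}_{j}^{\top}\mathbb{E}[\mathcal{X}\mathcal{X}^{\top}]\boldsymbol{q}_{k} = \gamma_{k}\,\boldsymbol{q}_{j}^{\top}\boldsymbol{q}_{k}$, one gets $\mathbb{E}[\varphi_{j}\varphi_{k}] = (\gamma_{j}\gamma_{k})^{-1/2}\gamma_{k}\delta_{jk} = \delta_{jk}$. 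In particular $\varphi_{1},\dots,\varphi_{R}$ are $R$ linearly independent elements of $\mathrm{Im}(K_{\mathcal{X}}) = \mathrm{Im}\bigl((\mathcal{X}^{\top}\cdot\,)\bigr)$; since $\dim\mathrm{Im}\bigl((\mathcal{X}^{\top}\cdot\,)\bigr) = \mathrm{rank}(K_{\mathcal{X}}) = \mathrm{rank}(K_{\mathcal{X}}^{*}K_{\mathcal{X}}) = R$ by Lemma~\ref{lem:same-rank}, they form an orthonormal basis of this finite-dimensional, hence closed, subspace of $L^{2}(\Omega)$.

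It then remains to compute the action of $K_{\mathcal{X}}^{*} = \mathbb{E}[\mathcal{X}\,\cdot\,]$. On the basis elements, $K_{\mathcal{X}}^{*}\varphi_{k} = \gamma_{k}^{-1/2}\mathbb{E}[\mathcal{X}\mathcal{X}^{\top}]\boldsymbol{q}_{k} = \gamma_{k}^{1/2}\boldsymbol{q}_{k}$. For a general $y\in L^{2}(\Omega)$, the orthogonal decomposition $L^{2}(\Omega) = \mathrm{Im}(K_{\mathcal{X}})\oplus\ker(K_{\mathcal{X}}^{*})$, valid because $\ker(K_{\mathcal{X}}^{*}) = \mathrm{Im}(K_{\mathcal{X}})^{\perp}$ and the range is closed, lets us write $y = \sum_{k=1}^{R}\mathbb{E}[y\varphi_{k}]\varphi_{k} + y_{0}$ with $y_{0}\in\ker(K_{\mathcal{X}}^{*})$; applying $K_{\mathcal{X}}^{*}$ and using the previous identity gives $\mathbb{E}[\mathcal{X}y] = \sum_{k=1}^{R}\gamma_{k}^{1/2}\mathbb{E}[y\varphi_{k}]\boldsymbol{q}_{k} = Q[\gamma_{k}^{1/2}\mathbb{E}[y\varphi_{k}]]_{k=1}^{R}$, which is the claimed expansion. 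I do not expect any substantial obstacle: this is the textbook passage from the eigendecomposition of $K^{*}K$ to the singular system of $K$, and the only points that warrant a word of care are the orthogonal splitting of $L^{2}(\Omega)$ (immediate once Lemma~\ref{lem:same-rank} guarantees the range is finite-dimensional) and keeping the bookkeeping straight between the Euclidean inner product on $\mathbb{R}^{d}$ and the $L^{2}(\Omega)$ inner product when transferring the eigenstructure of $K_{\mathcal{X}}^{*}K_{\mathcal{X}}$ to the singular system.
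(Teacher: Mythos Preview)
Your proof is correct and follows essentially the same approach as the paper: both derive the singular system of $K_{\mathcal{X}}^{*}=\mathbb{E}[\mathcal{X}\,\cdot\,]$ from the eigendecomposition of $K_{\mathcal{X}}^{*}K_{\mathcal{X}}=\mathbb{E}[\mathcal{X}\mathcal{X}^{\top}]$. The only difference is in the final step: the paper expands $\mathbb{E}[\mathcal{X}y]\in\mathbb{R}^{d}$ directly in the orthonormal basis $\{\boldsymbol{q}_{k}\}$ of $\mathrm{Im}(\mathbb{E}[\mathcal{X}\,\cdot\,])$ and reads off the coefficients $c_{k}=\boldsymbol{q}_{k}^{\top}\mathbb{E}[\mathcal{X}y]=\gamma_{k}^{1/2}\mathbb{E}[y\varphi_{k}]$, whereas you decompose $y\in L^{2}(\Omega)$ via $\mathrm{Im}(K_{\mathcal{X}})\oplus\ker(K_{\mathcal{X}}^{*})$ and then apply $K_{\mathcal{X}}^{*}$; the paper's route is marginally shorter since it avoids invoking the closed-range decomposition of $L^{2}(\Omega)$.
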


\begin{proof}
From
	\[
	\mathbb{E}[\mathcal{X}\mathcal{X}^{\top}\gamma_{k}^{-1}\boldsymbol{q}_{k}]
	=\gamma_{k}^{-1}Q\mathrm{diag}(\gamma_{1},\dots,\gamma_{R})Q^{\top}\boldsymbol{q}_{k}=\boldsymbol{q}_{k},
	\]
	we have $\{\boldsymbol{q}_{k}\}_{k=1}^{R}\subset\mathrm{Im}(\mathbb{E}[\mathcal{X}\,\cdot\,])$,
	and thus $\{\boldsymbol{q}_{k}\}_{k=1}^{R}$ is an orthonormal basis
	of $\mathrm{Im}(\mathbb{E}[\mathcal{X}\,\cdot\,])\subset\mathbb{R}^{d}$.

	Thus, with some coefficients $\{c_{k}\}_{k=1}^{R}\subset\mathbb{R}$
	we have for any $y \in L^2(\Omega)$ a representation $\sum_{k=1}^{R}c_{k}\boldsymbol{q}_{k}=\mathbb{E}[\mathcal{X}y]$,
	which implies
	\[
	c_{k}=\boldsymbol{q}_{k}^{\top}\mathbb{E}[\mathcal{X}y]=\mathbb{E}[\mathcal{X}^{\top}\boldsymbol{q}_{k}y]=\gamma_{k}^{1/2}\mathbb{E}[\gamma_{k}^{-1/2}\mathcal{X}^{\top}\boldsymbol{q}_{k}y] = \gamma_{k}^{1/2}\mathbb{E}[\varphi_{k}y].
	\]
	The functions $\varphi_{k}=\gamma_{k}^{-1/2}\mathcal{X}^{\top}\boldsymbol{q}_{k}$,
	$k=1,\dots,R$ are orthonormal in $L^{2}(\Omega)$, and thus form an
	orthonormal basis in the $R$-dimensional subspace $\mathrm{Im}\bigl((\mathcal{X}^{\top}\cdot\,)\bigr)$.
\end{proof}
{\begin{Remark}\label{rem: rank of span X}
Lemmata~\ref{lem:same-rank} and~\ref{lem:SVD-finite-rank-op}
imply $\mathrm{rank}\left(\mathcal{X}\right)=\mathrm{dim}(\mathrm{span}(\mathcal{X}))$,
where 
\[
\mathrm{span}(\mathcal{X}):=\mathrm{span}(\{\mathcal{X}(\omega)\in\mathbb{R}^{d}\mid\omega\in\Omega\})=\biggl\{\sum_{j=1}^{n}c_{j}\mathcal{X}(\omega_{j})\in\mathbb{R}^{d}\,\bigg|\,n\in\mathbb{N},\,c_{j}\in\mathbb{R},\,\omega_{j}\in\Omega\biggr\}\subset\mathbb{R}^{d},
\]
i.e.~the rank of $\mathcal{X}\in L^{2}(\Omega;\mathbb{R}^{d})$ as
in Definition~\ref{def:DLRA-rank-R} gives the dimension of the space spanned by all the
realizations of $\mathcal{X}$. To see this, take $\mathcal{X}\in L^{2}(\Omega;\mathbb{R}^{d})$
such that $\mathrm{dim}(\mathrm{Im}(\mathbb{E}[\mathcal{X}\,\cdot\,]))=R$.
Lemma~\ref{lem:same-rank} implies that $\mathbb{E}[\mathcal{X}\mathcal{X}^{\top}]=Q\mathrm{diag}(\gamma_{1},\dots,\gamma_{R})Q^{\top}$
holds, where $\gamma_{1},\dots,\gamma_{R}>0$ and $Q\in\mathbb{R}^{d\times R}$
is a matrix consisting of $R$ orthogonal columns $\boldsymbol{q}_{1},\dots,\boldsymbol{q}_{R}\in\mathbb{R}^{d}$,
while Lemma \ref{lem:SVD-finite-rank-op} tells us that $\varphi_{k}:=\gamma_{k}^{-1/2}\mathcal{X}^{\top}\boldsymbol{q}_{k}$,
for $k=1,\dots,R$, is an orthonormal basis of $\mathrm{Im}\bigl((\mathcal{X}^{\top}\cdot\,)\bigr)\subset L^{2}(\Omega)$.
Therefore, $\mathcal{X}$ admits the expansion
\begin{equation}
	\mathcal{X}(\omega)=\sum_{k=1}^{R}\mathbb{E}[\mathcal{X}\varphi_{k}]\varphi_{k}(\omega),\label{eq: exp of X tran}
\end{equation}
where $\mathbb{E}[\mathcal{X}\varphi_{k}]:=(\mathbb{E}[\mathcal{X}_{\ell}\varphi_{k}])_{\ell=1,\dots,d}$
is the component-wise $L^{2}(\Omega)$-inner product. From $\varphi_{k}=\gamma_{k}^{-1/2}\boldsymbol{q}_{k}^{\top}\mathcal{X}$
we have 
\begin{equation}
	\mathbb{E}[\mathcal{X}\varphi_{k}]=\gamma_{k}^{-1/2}\mathbb{E}[\mathcal{X}\mathcal{X}^{\top}\boldsymbol{q}_{k}]=\gamma_{k}^{-1/2}Q\mathrm{diag}(\gamma_{1},\dots,\gamma_{R})Q^{\top}\boldsymbol{q}_{k}=\gamma_{k}^{1/2}\boldsymbol{q}_{k}.\label{eq: exp varphi}
\end{equation}
By substituting \eqref{eq: exp varphi} into \eqref{eq: exp of X tran},
we obtain $
\mathcal{X}(\omega)=\sum_{k=1}^{R}\gamma_{k}^{1/2}\varphi_{k}(\omega)\boldsymbol{q}_{k}$ for any $\omega\in\Omega$,. This implies that $\sum_{j=1}^{n}c_{j}\mathcal{X}(\omega_j)= \sum_{k=1}^{R}\gamma_{k}^{1/2}c_{j}\varphi_{k}(\omega_j)\boldsymbol{q}_{k}$ for any $n\in\mathbb{N}$, $c_{j}\in\mathbb{R},$ and $\omega_{j}\in\Omega$, i.e.
the space spanned by all the realizations of $\mathcal{X}(\omega)$ is spanned by
$\boldsymbol{q}_{1},\dots,\boldsymbol{q}_{R}$. Therefore, $\mathrm{dim}(\mathrm{span}(\mathcal{X}))=R$.
\end{Remark}}

The equation \eqref{eq:covariance} together with Lemma \ref{lem:SVD-finite-rank-op} shows 
$\frac{d}{dt}(\mathbb{E}[\mathcal{X}_{t}\mathcal{X}_{t}^{\top}])\in T_{\mathbb{E}[\mathcal{X}_t\mathcal{X}_t^{\top}]}\mathcal{M}$. Indeed, the first term of the right hand side of \eqref{eq:covariance} can be written as 
\begin{align*}
	\mathbb{E}\bigl[\mathcal{X}_{t}\mu_{t}^{\top}\bigr] & =\mathbb{E}\bigl[\mathcal{X}_{t}(\mathscr{P}_{\mathcal{Y}(\mathcal{X}_t)}a(t,\mathcal{X}_{t}))^{\top}\bigr](I_{d\times d}-\mathscr{P}_{\mathcal{U}(\mathcal{X}_t)})+\mathbb{E}\bigl[\mathcal{X}_{t}a(t,\mathcal{X}_{t})^{\top}\bigr]\mathscr{P}_{\mathcal{U}(\mathcal{X}_t)}\\
	& =Q_{t}B_{t}(I_{d\times d}-Q_{t}Q_{t}^{\top})+Q_{t}C_{t}Q_{t}Q_{t}^{\top},
\end{align*}
where, using Lemma~\ref{lem:SVD-finite-rank-op}, we may write $Q_{t}B_{t}:=\mathbb{E}\bigl[\mathcal{X}_{t}(\mathscr{P}_{\mathcal{Y}(\mathcal{X}_t)}a(t,\mathcal{X}_{t}))^{\top}\bigr]$
and $Q_{t}C_{t}:=\mathbb{E}\bigl[\mathcal{X}_{t}a(t,\mathcal{X}_{t})^{\top}\bigr]$
with some $B_{t},C_{t}\in\mathbb{R}^{d\times d}$. Then, with $\tilde{V}_{t}:=(B_{t}(I_{d\times d}-Q_{t}Q_{t}^{\top}))^{\top}$
and $\tilde{A}_{t}:=C_{t}Q_{t}$ we have
\[
\mathbb{E}\bigl[\mathcal{X}_{t}\mu_{t}^{\top}\bigr]=Q_{t}\tilde{V}_{t}^{\top}+Q_{t}\tilde{A}_{t}Q_{t}^{\top}.
\]
Similarly, we have $\mathbb{E}[\mu_{t}\mathcal{X}_{t}^{\top}]=\tilde{V}_{t}Q_{t}^{\top}+Q_{t}\tilde{A}_{t}^{\top}Q_{t}^{\top}$
and $\mathbb{E}[\Sigma_{t}\Sigma_{t}^{\top}]=Q_{t}Q_{t}^{\top}\mathbb{E}[b(t,\mathcal{X}_{t})b(t,\mathcal{X}_{t})^{\top}]Q_{t}Q_{t}^{\top}$.
Hence, if $\mathbb{E}[\mathcal{X}_t\mathcal{X}_t^{\top}]$ is of rank $R$ around $t$, then its derivative \eqref{eq:covariance}, which was derived from the projected dynamics \eqref{eq:eq-manifold DLR}, lies indeed in 
$T_{\mathbb{E}[\mathcal{X}_t\mathcal{X}_t^{\top}]}\mathcal{M}$. 
This consistency supports the validity of the formulation \eqref{eq:eq-manifold DLR}, and in turn that of the DO equations \eqref{eq:DLR-eq-U} and \eqref{eq:DLR-eq-Y}.

\subsection{Equivalence of DO and DLR formulations} \label{sec:equiv-DO-DLR}
In the previous section, we showed that  if there exists a strong DO solution $(\boldsymbol{U}_t, \boldsymbol{Y}_t)$ to~\eqref{eq:true-eq}, then the corresponding DO approximation $X_t = \boldsymbol{U}^{\top}_t \boldsymbol{Y}_t$ satisfies  \eqref{eq:eq-manifold DLR}. We now investigate the reverse question: if $\mathcal{X}_t$ is a rank-$R$ solution of \eqref{eq:eq-manifold DLR} (DLR solution of rank $R$ of \eqref{eq:true-eq}) does there exist a DO solution $(\boldsymbol{U}, \boldsymbol{Y})$ such that $\mathcal{X} = \boldsymbol{U}^{\top}\boldsymbol{Y}$?

First, we need the following bound for the DLR solution.
\begin{lem} \label{lem: gronwall for X}
	Let a rank-$R$ DLR solution $\mathcal{X}_t$, $t \in [0,T]$, to \eqref{eq:true-eq} with $\mathcal{X}_0 \in L^2(\Omega;\mathbb{R}^{d})$ be given. For all $t \in [0,T]$, $\mathcal{X}_t$ satisfies
	\begin{equation}\label{eq: gronwall for X}
		\mathbb{E}[|\mathcal{X}_{t}|^2] \leq 3 \big( \mathbb{E}[|\mathcal{X}_0|^2] + C_{\mathrm{lgb}}T(T + 1) \big)\mathrm{exp}\big( 3C_{\mathrm{lgb}}T(T + 1) \big) =: M(T)
	\end{equation}
	\begin{proof}
		Taking the squared $L^2(\Omega)$-norm of $\mathcal{X}_{t}$ and using \eqref{eq:eq-manifold DLR} in integral form, together with Itô's isometry, Jensen's inequality, and Assumption~\ref{assump:lin-growth}, we have
		\begin{equation*}
			\begin{aligned}
				\mathbb{E}[|\mathcal{X}_{t}|^2]  =&\mathbb{E}[|\mathcal{X}_0 + \int^{t}_{0}\bigl(\bigl(I_{d\times d}-\mathscr{P}_{\mathcal{U}(\mathcal{X}_s)}\bigr)[\mathscr{P}_{\mathcal{Y}(\mathcal{X}_s)}a(s,\mathcal{X}_{s})]+\mathscr{P}_{\mathcal{U}(X_s)}a(s,\mathcal{X}_{s})\bigr)\,ds\\
				&+\mathscr{P}_{\mathcal{U}(\mathcal{X}_s)}b(s,\mathcal{X}_{s})\,dW_{s}|^2] \\
			\leq &3 \mathbb{E}[|\mathcal{X}_0|^2] + 3 \mathbb{E}[|\int^{t}_{0}\left(\bigl(I_{d\times d}-\mathscr{P}_{\mathcal{U}(\mathcal{X}_s)}\bigr)\mathscr{P}_{\mathcal{Y}(\mathcal{X}_s)}+ \mathscr{P}_{\mathcal{U}(\mathcal{X}_s)}\right)a(s,\mathcal{X}_{s})\,ds|^2] \\ & +3\mathbb{E}[|\int^{t}_{0}\mathscr{P}_{\mathcal{U}(\mathcal{X}_s)}b(s,\mathcal{X}_{s})\,dW_{s}|^2] \\
				  \leq & 3 \mathbb{E}[|\mathcal{X}_0|^2] + 3T\int^{t}_{0}\mathbb{E}[|\left(\bigl(I_{d\times d}-\mathscr{P}_{\mathcal{U}(\mathcal{X}_s)}\bigr)\mathscr{P}_{\mathcal{Y}(\mathcal{X}_s)}+\mathscr{P}_{\mathcal{U}(\mathcal{X}_s)}\right)a(s,\mathcal{X}_{s})\,|^2]ds \\ &+3\int^{t}_{0}\mathbb{E}[|\mathscr{P}_{\mathcal{U}(\mathcal{X}_s)}b(s,\mathcal{X}_{s})\,|^2]ds \\
				 \leq & 3 \mathbb{E}[|\mathcal{X}_0|^2] + 3C_{\mathrm{lgb}}(T + 1)\int^{t}_{0} (1+ \mathbb{E}[|\mathcal{X}_{s}|^2]) ds.
			\end{aligned}
		\end{equation*}
		Hence, Gronwall's lemma implies the statement.
	\end{proof}
\end{lem}
The following theorem gives the uniqueness of the DLR solution.
\begin{thm}\label{thm:uniqueness of DLR}
	Let $\mathcal{X}_0 \in L^2(\Omega;\mathbb{R}^{d})$ be such that $\mathrm{dim}(\mathrm{Im}(\mathbb{E}[\mathcal{X}_0\,\cdot\,])) = R$. Suppose that two DLR solutions $\mathcal{X}_t$ and $\mathcal{Z}_t$ of rank $R$ to \eqref{eq:true-eq} with initial datum $\mathcal{X}_0$ exist on $[0,T]$.
	Then $\mathcal{X}$ and $\mathcal{Z}$ are indistinguishable.	
	\begin{proof}
		The processes $\mathcal{X}_t$ and $\mathcal{Z}_t$ are assumed to satisfy
		\begin{equation*}
			d\mathcal{X}_{t}=\underbrace{\bigl(\bigl(I_{d\times d}-\mathscr{P}_{\mathcal{U}(\mathcal{X}_t)}\bigr)[\mathscr{P}_{\mathcal{Y}(\mathcal{X}_t)}a(t,\mathcal{\mathcal{X}}_{t})]+\mathscr{P}_{\mathcal{U}(\mathcal{X}_t)}a(t,\mathcal{X}_{t})\bigr)}_{=:\mathfrak{P}_{\mathcal{X}_t}a(t,\mathcal{\mathcal{X}}_{t})}\,dt+\mathscr{P}_{\mathcal{U}(\mathcal{X}_t)}b(t,\mathcal{X}_{t})\,dW_{t} 
		\end{equation*}
		and
		\begin{equation*}
			d\mathcal{Z}_{t}=\underbrace{\bigl(\bigl(I_{d\times d}-\mathscr{P}_{\mathcal{U}( \mathcal{Z}_t)}\bigr)[\mathscr{P}_{\mathcal{Y}(\mathcal{Z}_t)}a(t, \mathcal{Z}_{t})]+\mathscr{P}_{\mathcal{U}(\mathcal{Z}_t)}a(t, \mathcal{Z}_{t})\bigr)}_{=:\mathfrak{P}_{\mathcal{Z}_t}a(t,\mathcal{Z}_{t})}\,dt+\mathscr{P}_{\mathcal{U}(\mathcal{Z}_t)}b(t, \mathcal{Z}_{t})\,dW_{t},
		\end{equation*}
		respectively, for the same initial datum $\mathcal{X}_0$. This implies that
		\begin{equation}\label{eq: lip diffe DLR et DO}
			\begin{aligned}
				\mathbb{E}\left[\left|\mathcal{X}_{t}-\mathcal{Z}_{t}\right|^2\right] & \leq \mathbb{E}\Bigg[\Bigg| \int_{0}^{t}\bigl(\bigl(I_{d\times d}-\mathscr{P}_{\mathcal{U}(\mathcal{X}_s)}\bigr)\mathscr{P}_{\mathcal{Y}(\mathcal{X}_s)}+\mathscr{P}_{\mathcal{U}(\mathcal{X}_s)} \bigr) a(s,\mathcal{X}_{s}) \\
				& - \bigl(\bigl(I_{d\times d}-\mathscr{P}_{\mathcal{U}(\mathcal{Z}_s)}\bigr)\mathscr{P}_{\mathcal{Y}(\mathcal{Z}_s)}+\mathscr{P}_{\mathcal{U}(\mathcal{Z}_s)} \bigr) a(s, \mathcal{Z}_{s}) ds \\
				& + \int_{0}^{t} \mathscr{P}_{\mathcal{U}(\mathcal{X}_s)}b(s,\mathcal{X}_{s}) - \mathscr{P}_{\mathcal{U}(\mathcal{Z}_s)}b(s, \mathcal{Z}_{s}) dW_{s}\Bigg|^2\Bigg] \\
				& \leq 4\mathbb{E}\left[\left| \int_{0}^{t}\bigl(\mathfrak{P}_{\mathcal{X}_s}  -\mathfrak{P}_{\mathcal{Z}_s}\bigr)a(s,\mathcal{X}_{s})ds\right|^2\right] + 4\mathbb{E}\left[\left|\int_{0}^{t}\mathfrak{P}_{\mathcal{Z}_s} \bigl( a(s, \mathcal{Z}_{s}) - a(s,\mathcal{X}_{s})  \bigr) ds\right|^2\right] \\
				& + 4\mathbb{E}\left[\left|\int_{0}^{t}\bigl(\mathscr{P}_{\mathcal{U}(\mathcal{X}_s)} - \mathscr{P}_{\mathcal{U}(\mathcal{Z}_s)}\bigr)b(s,\mathcal{X}_{s})dW_{s}\right|^2\right] \\
				& + 4\mathbb{E}\left[\left|\int_{0}^{t}\mathscr{P}_{\mathcal{U}(\mathcal{Z}_s)}\bigl(b(s,\mathcal{X}_{s}) - b(s, \mathcal{Z}_{s})\bigr)dW_{s}\right|^2\right] \\
			\end{aligned}
		\end{equation}		

		{Denote by 
		$$\tilde{\gamma} := 
		\inf_{t \in [0,T]} \max \{\sigma_R \!\!\;\bigl(\mathbb{E}[\mathcal{X}_t\mathcal{X}_t^{\top}]\bigr), \sigma_R \!\!\;\bigl(\mathbb{E}[\mathcal{Z}_t\mathcal{Z}_t^{\top}]\bigr) \}$$
		 the infimum over time of the maximum between the $R$-th singular value of the matrices $\mathbb{E}[\mathcal{X}_t\mathcal{X}_t^{\top}]\in\mathbb{R}^{R\times R}$ and $\mathbb{E}[\mathcal{Z}_t\mathcal{Z}_t^{\top}]\in\mathbb{R}^{R\times R}$} , where for all $t$ we are considering the singular values in a decreasing order.  
		We have $\tilde{\gamma} >0$. 
		To see this, first note that, from Lemma~\ref{lem:same-rank}  the rank of $\mathbb{E}[\mathcal{X}_t\mathcal{X}_t^{\top}]$ is the same as the rank of $\mathcal{X}_t$. 
		But from the definition of DLR solution, $\mathcal{X}_t$ has always rank $R$, and moreover, the continuity of $\mathcal{X}_t$ implies continuity of $\sigma_R \!\!\;\bigl(\!\:\mathbb{E}[\mathcal{X}_t\mathcal{X}_t^{\top}])$ on $[0,T]$. Hence $\tilde{\gamma} >0$ follows; see also \cite[Lemma~2.1]{kazashi2021existence}. 
		{Then, Proposition~\ref{prop: lip of the proj} gives us the following Lipschitz bounds for the projectors for any $s \in [0,T]$:
		\begin{equation}
			\begin{aligned}
				&\mathbb{E}[|\bigl( \mathfrak{P}_{\mathcal{X}_s}-\mathfrak{P}_{\mathcal{Z}_s} \bigr) v|^2] 
				 \leq \left(\frac{2}{\tilde{\gamma}}\right)^2  \mathbb{E}[|\mathcal{X}_{s}-\mathcal{Z}_{s}|^2]\|v\|^2_{L^2(\Omega;\mathbb{R}^d)}  \quad \text{ for any }\ v \in L^2(\Omega; \mathbb{R}^d),\\
				&\|\mathscr{P}_{\mathcal{U}(\mathcal{X}_s)} - \mathscr{P}_{\mathcal{U}(\mathcal{Z}_s)}\|^2_{2}  \leq \frac{\mathbb{E}[|\mathcal{X}_{s}-\mathcal{Z}_{s}|^2]}{\tilde{\gamma}^2}.
			\end{aligned}
		\end{equation}
	Hence \eqref{eq: lip diffe DLR et DO}, It\^o's isometry and Jensen's inequality lead to
	\begin{equation}
		\begin{aligned}
			\mathbb{E}[|\mathcal{X}_{t}-\mathcal{Z}_{t}|^2] \leq & 4T \int_{0}^{t} \mathbb{E}[|\bigl(\mathfrak{P}_{\mathcal{X}_s}  - \mathfrak{P}_{\mathcal{Z}_s} \bigr) a(s,\mathcal{X}_{s})|^2] ds \\
			&+ 4T\int_{0}^{t}\mathbb{E}[|\mathfrak{P}_{\mathcal{Z}_s} \bigl( a(s, \mathcal{Z}_{s}) - a(s,\mathcal{X}_{s})  \bigr)|^2] ds \\
			& + 4\int_{0}^{t}\mathbb{E}[|\bigl(\mathscr{P}_{\mathcal{U}(\mathcal{X}_s)} - \mathscr{P}_{\mathcal{U}(\mathcal{Z}_s)}\bigr)b(s,\mathcal{X}_{s})|^2]ds \\
			& + 4\int_{0}^{t}\mathbb{E}[|\mathscr{P}_{\mathcal{U}(\mathcal{Z}_s)}\bigl(b(s,\mathcal{X}_{s}) - b(s, \mathcal{Z}_{s})\bigr)|^2] d{s}\\
			\leq & \overline{C} \int_{0}^{t} \mathbb{E}[|\mathcal{X}_{s}-\mathcal{Z}_{s}|^2] ds, \\    		 
		\end{aligned}
	\end{equation}
	for all $t$ in $[0,T]$ with $\overline{C}:=4 \left( \left(4T+1\right)\tilde{\gamma}^{-2} C_{\mathrm{lgb}}(1+M(T)) + C^2_{\mathrm{Lip}} (T+1) \right)$ and $M(T)$ as in \eqref{eq: gronwall for X}. Gronwall's lemma yields $\mathbb{E}[|\mathcal{X}_{t}-\mathcal{Z}_{t}|^2] = 0$ for all $t \in [0,T]$ and therefore 
	$(\mathcal{Z}_t)_{t \in [0,T]}$ and $(\mathcal{X}_t)_{t \in [0,T]}$ are versions of each other.	
	Moreover, because they both have a.s.~continuous paths, they are indistinguishable; see for example~\cite[Theorem 2]{Protter.P.E_2005_book_StochasticIntegral_2nd}.
	}
	\end{proof}
\end{thm}

Given a strong DO solution $(\boldsymbol{U},\boldsymbol{Y})$ to \eqref{eq:true-eq}, the corresponding DO approximation $X_t$ satisfies the equation  \eqref{eq:eq-manifold DO}, and thus \eqref{eq:eq-manifold DLR}. Hence, Theorem~\ref{thm:uniqueness of DLR} gives the sought equivalence between the DLR equation~\eqref{eq:eq-manifold DLR} and the DO equation~\eqref{eq:DLR-eq-U}--\eqref{eq:DLR-eq-Y}, which we state as a corollary.
\begin{cor}\label{cor:equivalence DO and DLR}
	Let $\mathcal{X}_0 \in L^2(\Omega;\mathbb{R}^{d})$ with $\mathrm{dim}(\mathrm{Im}(\mathbb{E}[\mathcal{X}_0\,\cdot\,])) = R$. Suppose that a DLR solution $\mathcal{X}_t$ of rank $R$ to \eqref{eq:true-eq} and a strong DO solution $(\boldsymbol{U},\boldsymbol{Y})$ exist on $[0,T]$, both with initial datum $\mathcal{X}_0$. Then, $\mathcal{X}_t$ and the DO approximation $X_t = \boldsymbol{U}^{\top}_t\boldsymbol{Y}_t$ are indistinguishable on $[0,T]$.
\end{cor}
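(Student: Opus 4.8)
The plan is to show that the DO approximation $X_t := \boldsymbol{U}_t^{\top}\boldsymbol{Y}_t$ associated with the given strong DO solution $(\boldsymbol{U},\boldsymbol{Y})$ is itself a DLR solution of rank $R$ to \eqref{eq:true-eq} with initial datum $\mathcal{X}_0$, and then to invoke the uniqueness statement of Theorem~\ref{thm:uniqueness of DLR}.

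First I would check that $X$ solves \eqref{eq:eq-manifold DLR}. This is exactly the computation carried out in Section~\ref{sec: proj dyn}: apply Itô's formula to $X_t = \boldsymbol{U}_t^{\top}\boldsymbol{Y}_t$, the quadratic covariation term vanishing because $t\mapsto\boldsymbol{U}_t$ is absolutely continuous, then insert the DO equations \eqref{eq:DLR-eq-U}--\eqref{eq:DLR-eq-Y}, and use the orthonormality of the rows of $\boldsymbol{U}_t$ together with the invertibility of $C_{\boldsymbol{Y}_t}$ (guaranteed by the linear independence of $Y^1_t,\dots,Y^R_t$) to identify $P_{\boldsymbol{U}_t}^{\mathrm{row}}=\boldsymbol{U}_t^{\top}\boldsymbol{U}_t$ with $\mathscr{P}_{\mathcal{U}(X_t)}$ and $\mathbb{E}[\,\cdot\,\boldsymbol{Y}_t^{\top}]C_{\boldsymbol{Y}_t}^{-1}\boldsymbol{Y}_t$ with $\mathscr{P}_{\mathcal{Y}(X_t)}$. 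This yields precisely \eqref{eq:eq-manifold DO}, which is \eqref{eq:eq-manifold DLR} with $\mathcal{X}=X$.

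Next I would verify the remaining requirements of Definition~\ref{def:DLRA-rank-R}. For the rank: since $\mathbb{E}[X_t y]=\boldsymbol{U}_t^{\top}\mathbb{E}[\boldsymbol{Y}_t y]$ and $\mathbb{E}[\boldsymbol{Y}_t\,\cdot\,]\colon L^2(\Omega)\to\mathbb{R}^{R}$ is surjective (again because $C_{\boldsymbol{Y}_t}$ is invertible), we get $\mathrm{Im}(\mathbb{E}[X_t\,\cdot\,])=\mathrm{span}\{U^1_t,\dots,U^R_t\}$, which has dimension $R$ because $\boldsymbol{U}_t$ has orthonormal rows for every $t$; alternatively one invokes Lemma~\ref{lem:same-rank}. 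The point is that this holds for all $t\in[0,T]$ by Definition~\ref{def: D0 sol}, not just at $t=0$. For path regularity: $t\mapsto\boldsymbol{U}_t$ is absolutely continuous and $t\mapsto\boldsymbol{Y}_t(\omega)$ is a.s.\ continuous, hence $t\mapsto X_t(\omega)=\boldsymbol{U}_t^{\top}\boldsymbol{Y}_t(\omega)$ is a.s.\ continuous. Finally $X_0=\boldsymbol{U}_0^{\top}\boldsymbol{Y}_0=\boldsymbol{\varphi}^{\top}\boldsymbol{\xi}=\mathcal{X}_0$ by hypothesis.

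With these points in place, $\mathcal{X}$ and $X$ are both DLR solutions of rank $R$ to \eqref{eq:true-eq} with the same initial datum $\mathcal{X}_0$, for which $\mathrm{dim}(\mathrm{Im}(\mathbb{E}[\mathcal{X}_0\,\cdot\,]))=R$; Theorem~\ref{thm:uniqueness of DLR} then gives that they are indistinguishable on $[0,T]$, completing the proof. There is no serious obstacle here, the corollary being just the combination of the projected-dynamics identity of Section~\ref{sec: proj dyn} with Theorem~\ref{thm:uniqueness of DLR}; the only point requiring a moment's care is that the rank-$R$ property of $X_t$ is preserved on the whole interval, which is exactly what the orthonormality constraint in the definition of a strong DO solution provides.
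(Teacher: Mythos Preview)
Your proposal is correct and follows precisely the paper's approach: the paragraph preceding the corollary states that the DO approximation $X_t=\boldsymbol{U}_t^\top\boldsymbol{Y}_t$ satisfies \eqref{eq:eq-manifold DO}, hence \eqref{eq:eq-manifold DLR}, and then Theorem~\ref{thm:uniqueness of DLR} yields the conclusion. You have in fact been slightly more careful than the paper in explicitly verifying the rank-$R$ and path-continuity requirements of Definition~\ref{def:DLRA-rank-R} for $X$, which the paper leaves implicit.
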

Moreover, the DO solution giving the same DLR solution is unique up to a rotation matrix. See also \cite[Section 2.2]{kazashi2021existence} for a similar result.
\begin{prop}
	\label{prop: equiv of param} 
	Assume that a DLR solution $\mathcal{X}_{t}$
	of rank $R$ to \eqref{eq:true-eq} with initial datum $\mathcal{X}_{0}\in[L^{2}(\Omega)]^{d}$
	exists for all $t\in[0,T]$. Suppose there exist two strong DO solutions
	$(\boldsymbol{U}_{t},\boldsymbol{Y}_{t})$ and $(\boldsymbol{V}_{t},\boldsymbol{Z}_{t})$.
	Then there exists a unique orthogonal matrix $\Theta\in\mathbb{R}^{R\times R}$
	such that  $(\boldsymbol{V}_{t},\boldsymbol{Z}_{t})=(\Theta\boldsymbol{U}_{t},\Theta\boldsymbol{Y}_{t})$. 
\end{prop}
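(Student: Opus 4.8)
The plan is to reduce the statement to two facts already available in the excerpt: (i) both DO approximations coincide with the (unique) DLR solution $\mathcal{X}$, and (ii) the row space of the deterministic factor of a DO solution is intrinsically determined by $\mathcal{X}_t$. Throughout I read the hypothesis as saying that $(\boldsymbol{U},\boldsymbol{Y})$ and $(\boldsymbol{V},\boldsymbol{Z})$ are strong DO solutions whose initial data both give the value $\mathcal{X}_{0}$ (otherwise Corollary~\ref{cor:equivalence DO and DLR} does not apply, and the statement would be false); this should be stated explicitly at the start of the proof.

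First I would apply Corollary~\ref{cor:equivalence DO and DLR} to each of $(\boldsymbol{U},\boldsymbol{Y})$ and $(\boldsymbol{V},\boldsymbol{Z})$: each of the DO approximations $\boldsymbol{U}_{t}^{\top}\boldsymbol{Y}_{t}$ and $\boldsymbol{V}_{t}^{\top}\boldsymbol{Z}_{t}$ is indistinguishable from $\mathcal{X}_{t}$, hence from each other, so $\boldsymbol{U}_{t}^{\top}\boldsymbol{Y}_{t}=\boldsymbol{V}_{t}^{\top}\boldsymbol{Z}_{t}$ almost surely for every $t\in[0,T]$. By the identification $\mathrm{Im}(\mathbb{E}[\mathcal{X}_{t}\,\cdot\,])=\mathrm{span}\{U_{t}^{1},\dots,U_{t}^{R}\}$ (and likewise for $\boldsymbol{V}_{t}$) noted just before \eqref{eq:eq-manifold DO}, together with Lemma~\ref{lem:same-rank}, the matrices $\boldsymbol{U}_{t}$ and $\boldsymbol{V}_{t}$ have the \emph{same} $R$-dimensional row space; in particular $P_{\boldsymbol{U}_{t}}^{\mathrm{row}}=P_{\boldsymbol{V}_{t}}^{\mathrm{row}}$, and since the rows of $\boldsymbol{V}_{t}$ lie in the row space of $\boldsymbol{U}_{t}$ we get $\boldsymbol{V}_{t}P_{\boldsymbol{U}_{t}}^{\mathrm{row}}=\boldsymbol{V}_{t}$, $P_{\boldsymbol{U}_{t}}^{\mathrm{row}}\boldsymbol{V}_{t}^{\top}=\boldsymbol{V}_{t}^{\top}$, and symmetrically with $\boldsymbol{U}$ and $\boldsymbol{V}$ exchanged.

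Next I would set $\Theta_{t}:=\boldsymbol{V}_{t}\boldsymbol{U}_{t}^{\top}\in\mathbb{R}^{R\times R}$. Using orthonormality of the rows and the previous identities, $\Theta_{t}\Theta_{t}^{\top}=\boldsymbol{V}_{t}P_{\boldsymbol{U}_{t}}^{\mathrm{row}}\boldsymbol{V}_{t}^{\top}=\boldsymbol{V}_{t}\boldsymbol{V}_{t}^{\top}=I_{R}$ and likewise $\Theta_{t}^{\top}\Theta_{t}=I_{R}$, so $\Theta_{t}$ is orthogonal; moreover $\Theta_{t}\boldsymbol{U}_{t}=\boldsymbol{V}_{t}P_{\boldsymbol{U}_{t}}^{\mathrm{row}}=\boldsymbol{V}_{t}$, and left-multiplying $\boldsymbol{U}_{t}^{\top}\boldsymbol{Y}_{t}=\boldsymbol{V}_{t}^{\top}\boldsymbol{Z}_{t}$ by $\boldsymbol{V}_{t}$ gives $\Theta_{t}\boldsymbol{Y}_{t}=\boldsymbol{V}_{t}\boldsymbol{V}_{t}^{\top}\boldsymbol{Z}_{t}=\boldsymbol{Z}_{t}$. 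Thus $(\boldsymbol{V}_{t},\boldsymbol{Z}_{t})=(\Theta_{t}\boldsymbol{U}_{t},\Theta_{t}\boldsymbol{Y}_{t})$ for all $t$. To see that $\Theta_{t}$ is constant, note that $\boldsymbol{U}$ and $\boldsymbol{V}$ are absolutely continuous, hence so is $\Theta$, with $\dot{\Theta}_{t}=\dot{\boldsymbol{V}}_{t}\boldsymbol{U}_{t}^{\top}+\boldsymbol{V}_{t}\dot{\boldsymbol{U}}_{t}^{\top}$ for a.e.\ $t$. From \eqref{eq:DLR-eq-U} for $(\boldsymbol{U},\boldsymbol{Y})$ we have $\dot{\boldsymbol{U}}_{t}^{\top}=(I_{d\times d}-P_{\boldsymbol{U}_{t}}^{\mathrm{row}})\,\mathbb{E}[a(t,\boldsymbol{U}_{t}^{\top}\boldsymbol{Y}_{t})\boldsymbol{Y}_{t}^{\top}]\,C_{\boldsymbol{Y}_{t}}^{-1}$, so $\boldsymbol{V}_{t}\dot{\boldsymbol{U}}_{t}^{\top}=0$ because $\boldsymbol{V}_{t}(I_{d\times d}-P_{\boldsymbol{U}_{t}}^{\mathrm{row}})=0$; symmetrically, \eqref{eq:DLR-eq-U} for $(\boldsymbol{V},\boldsymbol{Z})$ gives $\dot{\boldsymbol{V}}_{t}=C_{\boldsymbol{Z}_{t}}^{-1}\mathbb{E}[\boldsymbol{Z}_{t}a(t,\boldsymbol{V}_{t}^{\top}\boldsymbol{Z}_{t})^{\top}](I_{d\times d}-P_{\boldsymbol{V}_{t}}^{\mathrm{row}})$ and $(I_{d\times d}-P_{\boldsymbol{V}_{t}}^{\mathrm{row}})\boldsymbol{U}_{t}^{\top}=0$, so $\dot{\boldsymbol{V}}_{t}\boldsymbol{U}_{t}^{\top}=0$. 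Hence $\dot{\Theta}_{t}=0$ a.e., and absolute continuity yields $\Theta_{t}\equiv\Theta_{0}=:\Theta$. Uniqueness is immediate: if $(\Theta-\Theta')\boldsymbol{U}_{t}=0$ for another orthogonal $\Theta'$, right-multiplying by $\boldsymbol{U}_{t}^{\top}$ and using $\boldsymbol{U}_{t}\boldsymbol{U}_{t}^{\top}=I_{R}$ gives $\Theta=\Theta'$.

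The argument has no genuinely hard step; everything rests on the intrinsic characterisation of the common row space $\mathrm{Im}(\mathbb{E}[\mathcal{X}_{t}\,\cdot\,])$ (already proven in the excerpt) combined with the precise right factor $(I_{d\times d}-P^{\mathrm{row}})$ appearing in \eqref{eq:DLR-eq-U}. The only places calling for some care are: checking that $\Theta_{t}$ is truly \emph{constant} and not merely orthogonal at each time, i.e.\ the differentiation-and-cancellation computation above (which uses that $\boldsymbol{U}$, $\boldsymbol{V}$ are deterministic and absolutely continuous, so the product rule applies without stochastic subtleties); and, at the outset, flagging that the hypotheses must be read so that both DO solutions share the initial value $\mathcal{X}_{0}$, which is what makes Corollary~\ref{cor:equivalence DO and DLR} usable here.
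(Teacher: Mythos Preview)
Your proof is correct and takes a somewhat different, more constructive route than the paper's. The paper argues as follows: it \emph{postulates} an absolutely continuous orthogonal curve $\Theta(t)$ with $\boldsymbol{U}_t=\Theta^\top(t)\boldsymbol{V}_t$, differentiates this relation, and uses only the gauge conditions $\boldsymbol{U}_t\dot{\boldsymbol{U}}_t^\top=0$ and $\boldsymbol{V}_t\dot{\boldsymbol{V}}_t^\top=0$ from Definition~\ref{def: D0 sol} to derive the ODE $\dot{\Theta}=0$; it then produces the constant $\Theta_*=\mathbb{E}[\boldsymbol{Z}_0\boldsymbol{Y}_0^\top]C_{\boldsymbol{Y}_0}^{-1}$ from the initial data and verifies orthogonality a posteriori. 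You instead \emph{define} $\Theta_t:=\boldsymbol{V}_t\boldsymbol{U}_t^\top$ outright, use the intrinsic identification $\mathrm{span}\{U_t^j\}=\mathrm{Im}(\mathbb{E}[\mathcal{X}_t\,\cdot\,])=\mathrm{span}\{V_t^j\}$ to obtain orthogonality and the relations $\Theta_t\boldsymbol{U}_t=\boldsymbol{V}_t$, $\Theta_t\boldsymbol{Y}_t=\boldsymbol{Z}_t$ immediately, and then show constancy by exploiting the explicit factor $(I-P^{\mathrm{row}})$ in \eqref{eq:DLR-eq-U} (which kills $\boldsymbol{V}_t\dot{\boldsymbol{U}}_t^\top$ and $\dot{\boldsymbol{V}}_t\boldsymbol{U}_t^\top$ because the row spaces coincide). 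Your approach avoids the paper's slightly indirect ``assume $\Theta(t)$ exists, derive an ODE, then construct it'' structure and makes the geometric content (equal row spaces force the change-of-basis to be orthogonal) more transparent; the paper's approach, on the other hand, uses only the abstract DO gauge conditions rather than the full equation \eqref{eq:DLR-eq-U}, so it would survive verbatim for other parametrisations satisfying $\boldsymbol{U}\dot{\boldsymbol{U}}^\top=0$. Your remark that the hypothesis must be read so that both DO initial data give $\mathcal{X}_0$ is well taken and worth stating explicitly.
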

\begin{proof}
	The proof follows closely the arguments in \cite[Lemma 2.3, Corollary 2.4, and Lemma 2.5]{kazashi2021existence}.
	We will show that there exists a unique absolutely continuous curve
	$t\to\Theta(t)$ with orthogonal matrix $\Theta(t)\in\mathbb{R}^{R\times R}$
	for all $t$ such that $(\boldsymbol{V}_{t},\boldsymbol{Z}_{t})=(\Theta(t)\boldsymbol{U}_{t},\Theta(t)\boldsymbol{Y}_{t})$,
	and that such $\Theta(t)$ is a constant in $t$. First, we will derive
	an equation that $\Theta(t)$ must satisfy. Suppose that sought $\Theta(t)$
	exists. Note that, since both $\boldsymbol{U}_{t}^{\top}\boldsymbol{Y}_{t}$
	and $\boldsymbol{V}_{t}^{\top}\boldsymbol{Z}_{t}$ satisfy \eqref{eq:eq-manifold DO},
	Theorem~\ref{thm:uniqueness of DLR} implies $\mathcal{X}_{t}=\boldsymbol{U}_{t}^{\top}\boldsymbol{Y}_{t}=\boldsymbol{V}_{t}^{\top}\boldsymbol{Z}_{t}$
	for all $t$ and a.s. Then since $(\boldsymbol{V}_{t},\boldsymbol{Z}_{t})$
	is a strong DO solution, one must have 
	\begin{equation}
		\dot{\boldsymbol{U}}_{t}=\dot{(\Theta(t)\boldsymbol{V}_{t})}=\dot{\Theta}^{\top}(t)\boldsymbol{V}_{t}+\Theta^{\top}(t)\dot{\boldsymbol{V}}_{t}\quad\text{ for a.e. }\quad t\in[0,T].\label{eq:der-UThetaV}
	\end{equation}
	As $\boldsymbol{U}_{t}\dot{\boldsymbol{U}}_{t}^{\top}=0$ and $\boldsymbol{U}_{t}\boldsymbol{U}_{t}^{\top}=I_{R\times R}$,
	from $\boldsymbol{U}_{t}=\Theta^{\top}(t)\boldsymbol{V}_{t}$ it follows
	\begin{equation}
		0=\Theta^{\top}(t)\boldsymbol{V}_{t}\left(\dot{\Theta}^{\top}(t)\boldsymbol{V}_{t}+\Theta^{\top}(t)\dot{\boldsymbol{V}}_{t}\right)^{\top}=\Theta^{\top}(t)\left(\dot{\Theta}(t)+\boldsymbol{V}_{t}\dot{\boldsymbol{V}}_{t}^{\top}\Theta(t)\right).
	\end{equation}
	Using orthogonality of $\Theta(t)$ and $\boldsymbol{V}_{t}\dot{\boldsymbol{V}}_{t}^{\top}=0$,
	we obtain the following differential equation that $\Theta(t)$ has
	to satisfy: 
	\begin{equation}
		\dot{\Theta}(t)=0\ \ \text{ for a.e.~\ensuremath{t\in[0,T]} with }\Theta(0)=\Theta_{*},\label{eq: theta eq}
	\end{equation}
	where $\Theta_{*}$ is an orthogonal matrix. But this equation has
	a unique solution $\Theta(t)\equiv\Theta_{*}$, which is an orthogonal
	matrix. Hence, going back the argument above, $\Theta$ satisfies
	\eqref{eq:der-UThetaV}. The absolute continuity of $\boldsymbol{U}$
	and $\Theta\boldsymbol{V}$ yields $\boldsymbol{U}_{t}=\Theta(t)\boldsymbol{V}_{t}+C$
	for all $t\in[0,T]$, with some matrix $C\in\mathbb{R}^{R\times R}$.

	Such $\Theta_{*}$ that makes $C=0$ can be constructed explicitly.
	Since $\boldsymbol{U}_{t}$, $\boldsymbol{V}_{t}$ are deterministic,
	from $\boldsymbol{U}_{0}^{\top}\boldsymbol{Y}_{0}=\boldsymbol{V}_{0}^{\top}\boldsymbol{Z}_{0}$ we
	must have 
	\[
	\boldsymbol{U}_{0}^{\top}=\boldsymbol{V}_{0}^{\top}\mathbb{E}[\boldsymbol{Z}_{0}\boldsymbol{Y}_{0}^{T}]{C}_{\boldsymbol{Y}_{0}}^{-1}.
	\]
	Take $\Theta_{*}=\mathbb{E}[\boldsymbol{Z}_{0}\boldsymbol{Y}_{0}^{T}]{C}_{\boldsymbol{Y}_{0}}^{-1}$. Then, from the argument above $\boldsymbol{U}_{t}=\Theta\boldsymbol{V}_{t}$
	holds for all $t\in[0,T]$. We conclude the proof by noting the orthogonality
	of $\Theta_{*}$: 
	\[
	I_{R\times R}=\boldsymbol{U}_{0}\boldsymbol{U}_{0}^{\top}=\Theta_{*}^{\top}\boldsymbol{V}_{0}\boldsymbol{V}_{0}^{\top}\Theta_{*}=\Theta_{*}^{\top}\Theta_{*}.
	\]
\end{proof}

\begin{remark}\label{rmk: equivalence DO and DLR}
In Section \ref{sec: well-posed}, under Assumptions~1--3 we will show that there exists a unique strong DO solution $(\boldsymbol{U},\boldsymbol{Y})$ in a certain interval $[0,T]$. Hence, in view of Theorem~\ref{thm:uniqueness of DLR} and Corollary~\ref{cor:equivalence DO and DLR},  a unique DLR solution $\mathcal{X}_t$ exists on $[0,T]$ and moreover, given $\mathcal{X}_t$, we can always find a strong DO solution $(\boldsymbol{U},\boldsymbol{Y})$ such that $\mathcal{X}_t = \boldsymbol{U}_t^{\top}\boldsymbol{Y}_t$.
\end{remark}
\section{Local existence and uniqueness}\label{sec: well-posed}

The equations \eqref{eq:DLR-eq-U} and \eqref{eq:DLR-eq-Y} define
a non-standard system of stochastic differential equations. Notice
that in \eqref{eq:DLR-eq-U}, the matrix-valued function $[L^{2}(\Omega)]^{R}\ni\boldsymbol{Y}_{t}\mapsto{C}_{\boldsymbol{Y}_{t}}^{-1}\in\mathbb{R}^{R\times R}$
is not defined everywhere in $[L^{2}(\Omega)]^{R}$. Moreover, the
vector field $\mathbb{E}[\boldsymbol{Y}_{t}\,a(t,X_{t})^{\top}]$
requires taking the expectation, and thus depends on the knowledge
of all the paths of $\boldsymbol{Y}_{t}$. 
Hence, the vector fields that define the DO equation are not defined path-wise a priori. This setting makes the existence
and uniqueness result of DLR solutions non-trivial.

To establish existence
and uniqueness of solutions of \eqref{eq:DLR-eq-U} and \eqref{eq:DLR-eq-Y} given an initial datum {$(\boldsymbol{\varphi}, \boldsymbol{\xi})$}, we follow a fixed-point argument. 
We will define a sequence of Picard iterates, which belongs to a specific set of functions, where the first element of the sequence is made of a pair of constant-in-time functions {$(\boldsymbol{\varphi}, \boldsymbol{\xi})$}. 
Then we will show that this sequence converges in this set.  

Let $\boldsymbol{U}\in C([0,t];\mathbb{R}^{R\times d})$ be such that the rows $U^{1}_s, \dots, U^{R}_s\in\mathbb{R}^d$ of $\boldsymbol{U}_s$ are linearly independent for every $s \in [0,t]$ {(but not necessarily orthonormal)}, and let
$\boldsymbol{Y}\in L^{2}(\Omega;C([0,t];\mathbb{R}^{R}))$ be an $(\mathscr{F}_{t})$-adapted
process such that $\boldsymbol{Y}_{s}=(Y^{1}(s),\dots,Y^{R}(s))\in[L^{2}(\Omega)]^{R}$ has linearly independent components for every $s\in[0,t]$. For $\boldsymbol{\varphi}\in\mathbb{R}^{R\times d}$
with $d\geq R$ and $\boldsymbol{\xi}\in[L^{2}(\Omega)]^{R}$,
define
\begin{equation}
F_{1}(\boldsymbol{U},\boldsymbol{Y})(t):=\boldsymbol{\varphi}+\int_{0}^{t}{C}_{\boldsymbol{Y}_{s}}^{-1}\mathbb{E}[\boldsymbol{Y}_{s}a(s,\boldsymbol{U}_{s}^{\top}\boldsymbol{Y}_{s})^{\top}](I_{d\times d}-P_{\boldsymbol{U}_{s}}^{\mathrm{row}})\,ds\in\mathbb{R}^{R\times d},\label{eq:F1}
\end{equation}
\begin{equation}
F_{2}(\boldsymbol{U},\boldsymbol{Y})(t):=\boldsymbol{\xi}+\int_{0}^{t}\boldsymbol{U}_{s}a(s,\boldsymbol{U}_{s}^{\top}\boldsymbol{Y}_{s})\,ds+\int_{0}^{t}\boldsymbol{U}_{s}b(s,\boldsymbol{U}_{s}^{\top}\boldsymbol{Y}_{s})dW_{s}\in[L^{2}(\Omega)]^{R},\label{eq:F2}
\end{equation}
where we recall that $P_{\boldsymbol{U}_{s}}^{\mathrm{row}}$ is the projection-matrix onto the row space $\mathrm{span}\{U_{s}^{1},\dots,U_{s}^{R}\}\subset\mathbb{R}^{d}$ of $\boldsymbol{U}_{s}$. Note that the stochastic integral $\int_{0}^{t}\boldsymbol{U}_{s}b(s,\boldsymbol{U}_{s}^{\top}\boldsymbol{Y}_{s})dW_{s}$
is well defined, since \sloppy $\boldsymbol{Y}\in L^{2}(\Omega;C([0,t];\mathbb{R}^{R}))$
is (indistinguishable from) a progressively measurable process.

We will construct a unique fixed point of $F_{1}$ and $F_{2}$.
Because of the aforementioned difficulties, defining a suitable sequence
of Picard iterates requires some care. Let us consider $\boldsymbol{\varphi}\in\mathbb{R}^{R\times d}$
with $d\geq R$ having orthogonal row vectors, and $\boldsymbol{\xi}=(\xi_{1},\dots,\xi_{R})\in [L^{2}(\Omega)]^R$,
having linearly independent components, $\mathcal{F}_{0}$-measurable with $\rho^2:=\|\boldsymbol{\xi}\|_{[L^{2}(\Omega)]^{R}}^{2}$ and $\gamma :=\|{C}_{\boldsymbol{\xi}}^{-1}\|_{\mathrm{F}}$. 
We force iterations to belong to balls in $\mathbb{R}^{R\times d}$ and $[L^{2}(\Omega)]^{R}$ around $\boldsymbol{\varphi}$ and $\boldsymbol{\xi}$, respectively, of a suitable radius $\eta$.

For the ball in $\mathbb{R}^{R\times d}$, to invoke Proposition~\ref{prop: A inverse} in the appendix, we equip $\mathbb{R}^{R\times d}=[\mathbb{R}^d]^R$ with the norm $\|\boldsymbol{U}\|_{[\mathbb{R}^d]^R}^2=\sum_{j=1}^{R}|U^j|^2$. For $\boldsymbol{\varphi}=(\varphi^1,\dots,\varphi^R)^\top$ orthogonal, we have $\|\boldsymbol{\varphi}\|_{[\mathbb{R}^d]^R}=\sqrt{R}$, and  \[Z_{\boldsymbol{\varphi}}:=(\varphi^j(\varphi^k)^\top)_{j,k=1,\dots,R}
=\boldsymbol{\varphi}\boldsymbol{\varphi}^\top =
I_{R\times R},
\] so
with $\eta_1:=\eta(\sqrt{R},\sqrt{R})$ as in \eqref{eq:def-eta}, 
$\boldsymbol{v}\in B_{\eta_1}(\boldsymbol{\varphi})$ implies $\|(\boldsymbol{v}\boldsymbol{v}^\top)^{-1}\|_{\mathrm{F}}\leq 2{\sqrt{R}}$. Hence, the projection $(I_{d\times d}-P_{\boldsymbol{U}_{t}}^{\mathrm{row}})$ 
is Lipschitz continuous on $B_{\eta_1}(\boldsymbol{\varphi})$; see Lemma~\ref{lem: lip orth proj}.

For the ball in $[L^{2}(\Omega)]^{R}$, first note that, since 
$[L^{2}(\Omega)]^{R}\ni \boldsymbol{Z}\mapsto\mathbb{E}[\boldsymbol{Z}\boldsymbol{Z}^{\top}]^{-1}\in \mathbb{R}^{R\times R}$ 
is continuous in the open set $\Gamma = \{\boldsymbol{Z}\in [L^2(\Omega)]^R\mid   \mathrm{det}(\mathbb{E}[\boldsymbol{Z}\boldsymbol{Z}^{\top}]) \neq 0\}$ (cf.~\cite[Proof of Lemma~3.5]{kazashi2021existence}), 
and 
$\boldsymbol{\xi} \in \Gamma$, via Proposition~\ref{prop: A inverse} there exists a ball $B_{\eta_2}(\boldsymbol{\xi})$ in $[L^{2}(\Omega)]^{R}$
around $\boldsymbol{\xi}$ with radius $\eta_2:= \eta (\rho, \gamma)>0$ such that any $\boldsymbol{w} \in B_{\eta_2}(\boldsymbol{\xi})$ has linearly independent components and $\|{C}_{\boldsymbol{w}}^{-1}\|_{\mathrm{F}}\leq2\gamma$. 
Abusing the notation slightly, we let
\begin{equation}\label{eq: eta}
\eta=\eta(R,\rho,\gamma):=\min\{\eta(\sqrt{R},\sqrt{R}),\eta(\rho,\gamma)\}.
\end{equation}Proposition \ref{prop: A inverse} tells us that $\eta$ is non-increasing in both variables $\rho$ and $\gamma$.

We want to define sequences $\boldsymbol{Y}_{t}^{(n)}\in B_{\eta}(\boldsymbol{\xi})$
and $\boldsymbol{U}_{t}^{(n)}\in B_{\eta}(\boldsymbol{\varphi})$
for $t\in[0,\delta]$ with a suitable $\delta=\delta(\boldsymbol{\varphi},\boldsymbol{\xi})>0$.
To this extent, let $(\boldsymbol{U}_{t}^{(0)},\boldsymbol{Y}_{t}^{(0)}):=(\boldsymbol{\varphi},\boldsymbol{\xi})$,
$\boldsymbol{U}_{t}^{(n+1)}:=F_{1}(\boldsymbol{U}^{(n)},\boldsymbol{Y}^{(n)})(t)$,
$\boldsymbol{Y}_{t}^{(n+1)}:=F_{2}(\boldsymbol{U}^{(n)},\boldsymbol{Y}^{(n)})(t)$,
and $X_{t}^{(n)}:=\bigl(\boldsymbol{U}_{t}^{(n)}\bigr)^{\top}\boldsymbol{Y}_{t}^{(n)}$,
$n=0,1,\dots$ for $t\in[0,\delta]$ with
{
\begin{equation}
\delta:=\min\{1,\frac{
	\min\{1,
\eta^2 
	\}}
{36RC_{\mathrm{lgb}}(1+3R(3\rho^2 +1))},
\frac{
	\min\{
	\eta^2
,R\}
}{8\gamma^2(3\rho^2+1) C_{\mathrm{lgb}}(1+3R(3\rho^2 +1))}\}.\label{eq:def-delta}
\end{equation}
}
Moreover, let 
\[
\mathbb{D}_{\mathrm{det}}:=\left\{ \boldsymbol{V} \in C([0,\delta];\mathbb{R}^{R\times d})\,\left\vert \,\begin{array}{l}
\sup\limits_{0\leq t\leq\delta}\|\boldsymbol{V}_{t}\|_{\mathrm{F}}^{2}\leq3R\text{, and}\\
\boldsymbol{V}_{t}\in B_{\eta}(\boldsymbol{\varphi})\text{ for }t\in[0,\delta]
\end{array}\right\} \right.
\]
and
\[
\mathbb{D}_{\mathrm{sto}}:=\left\{ \boldsymbol{Z} \in L^{2}(\Omega;C([0,\delta];\mathbb{R}^{R}))\,\left\vert \,\begin{array}{l}
\ensuremath{\boldsymbol{Z}}\text{ is \ensuremath{\mathcal{F}_{t}}-adapted, }\\
\mathbb{E}\bigl[\sup\limits_{0\leq t\leq\delta}|\boldsymbol{Z}_{t}|^{2}\bigr]\leq 3\rho^2+1\text{, and}\\
\boldsymbol{Z}_{t}\in B_{\eta}(\boldsymbol{\xi})\text{ for }t\in[0,\delta]
\end{array}\right\} \right. .
\]
The following lemma shows that our Picard sequence takes value in $\mathbb{D}_{\mathrm{det}}\times\mathbb{D}_{\mathrm{sto}}$.
\begin{lem}
\label{lem:sq-in-D} Under Assumptions 2--3, the sequence $\bigl((\boldsymbol{U}^{(n)},\boldsymbol{Y}^{(n)})\bigr)_{n\geq0}$ defined above satisfies
\[
(\boldsymbol{U}^{(n)},\boldsymbol{Y}^{(n)})\in\mathbb{D}_{\mathrm{det}}\times\mathbb{D}_{\mathrm{sto}} \quad 
\text{ for all } \quad n\in \mathbb{N},
\]
where $\delta=\delta(C_{\mathrm{lgb}},d,\eta,R,\rho)$ is defined
in \eqref{eq:def-delta}.
\end{lem}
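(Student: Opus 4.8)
The plan is to argue by induction on $n$ that $(\boldsymbol{U}^{(n)},\boldsymbol{Y}^{(n)})\in\mathbb{D}_{\mathrm{det}}\times\mathbb{D}_{\mathrm{sto}}$. The conceptual engine is simply that the constraints $\boldsymbol{V}_t\in B_{\eta}(\boldsymbol{\varphi})$ and $\boldsymbol{Z}_t\in B_{\eta}(\boldsymbol{\xi})$ built into $\mathbb{D}_{\mathrm{det}}$, $\mathbb{D}_{\mathrm{sto}}$ keep the maps $F_1$, $F_2$ from \eqref{eq:F1}--\eqref{eq:F2} applicable at the next step: if $\boldsymbol{U}^{(n)}_s\in B_{\eta}(\boldsymbol{\varphi})$ then, by $\eta\le\eta_1$ and Lemma~\ref{lem: lip orth proj}, the rows of $\boldsymbol{U}^{(n)}_s$ are linearly independent, $P^{\mathrm{row}}_{\boldsymbol{U}^{(n)}_s}=(\boldsymbol{U}^{(n)}_s)^{\top}(\boldsymbol{U}^{(n)}_s(\boldsymbol{U}^{(n)}_s)^{\top})^{-1}\boldsymbol{U}^{(n)}_s$ is a genuine orthogonal projection with $\|(\boldsymbol{U}^{(n)}_s(\boldsymbol{U}^{(n)}_s)^{\top})^{-1}\|_{\mathrm{F}}\le2$; and if $\boldsymbol{Y}^{(n)}_s\in B_{\eta}(\boldsymbol{\xi})\subset\Gamma$ then $C_{\boldsymbol{Y}^{(n)}_s}$ is invertible with $\|C^{-1}_{\boldsymbol{Y}^{(n)}_s}\|_{\mathrm{F}}\le2\gamma$, by Proposition~\ref{prop: A inverse}. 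The base case $n=0$ is immediate: $t\mapsto\boldsymbol{\varphi}$ is continuous with $\|\boldsymbol{\varphi}\|_{\mathrm{F}}^{2}=R\le3R$ and $\boldsymbol{\varphi}\in B_{\eta}(\boldsymbol{\varphi})$, while $\boldsymbol{\xi}\in L^{2}(\Omega;C([0,\delta];\mathbb{R}^{R}))$ is $\mathcal{F}_t$-adapted (being $\mathcal{F}_0$-measurable), $\mathbb{E}[\sup_{t\le\delta}|\boldsymbol{\xi}|^{2}]=\rho^{2}\le3\rho^{2}+1$, and $\boldsymbol{\xi}\in B_{\eta}(\boldsymbol{\xi})$.

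For the induction step, assume $(\boldsymbol{U}^{(n)},\boldsymbol{Y}^{(n)})\in\mathbb{D}_{\mathrm{det}}\times\mathbb{D}_{\mathrm{sto}}$ and set $X^{(n)}_s:=(\boldsymbol{U}^{(n)}_s)^{\top}\boldsymbol{Y}^{(n)}_s$. The estimate I would record first is the a priori bound
\[
\mathbb{E}[|X^{(n)}_s|^{2}]\le\|\boldsymbol{U}^{(n)}_s\|_{\mathrm{F}}^{2}\,\mathbb{E}[|\boldsymbol{Y}^{(n)}_s|^{2}]\le3R(3\rho^{2}+1),\qquad s\in[0,\delta],
\]
which, together with \eqref{eq:lin-growth}, bounds $\mathbb{E}[|a(s,X^{(n)}_s)|^{2}]$ and $\mathbb{E}[\|b(s,X^{(n)}_s)\|_{\mathrm{F}}^{2}]$ by $C_{\mathrm{lgb}}(1+3R(3\rho^{2}+1))$ uniformly in $s\in[0,\delta]$. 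For $\boldsymbol{U}^{(n+1)}=F_1(\boldsymbol{U}^{(n)},\boldsymbol{Y}^{(n)})$, submultiplicativity of the Frobenius norm, Cauchy--Schwarz in $\Omega$, $\|C^{-1}_{\boldsymbol{Y}^{(n)}_s}\|_{\mathrm{F}}\le2\gamma$, $\|\mathbb{E}[\boldsymbol{Y}^{(n)}_s a(s,X^{(n)}_s)^{\top}]\|_{\mathrm{F}}\le(\mathbb{E}[|\boldsymbol{Y}^{(n)}_s|^{2}]\,\mathbb{E}[|a(s,X^{(n)}_s)|^{2}])^{1/2}$ and $\|I_{d\times d}-P^{\mathrm{row}}_{\boldsymbol{U}^{(n)}_s}\|_{\mathrm{F}}\le\sqrt d+\sqrt R$ give
\[
\|\boldsymbol{U}^{(n+1)}_t-\boldsymbol{\varphi}\|_{\mathrm{F}}\le t\cdot2\gamma\,(3\rho^{2}+1)^{1/2}\bigl(C_{\mathrm{lgb}}(1+3R(3\rho^{2}+1))\bigr)^{1/2}(\sqrt d+\sqrt R);
\]
since the integrand is bounded on $[0,\delta]$, $t\mapsto\boldsymbol{U}^{(n+1)}_t$ is absolutely continuous, hence in $C([0,\delta];\mathbb{R}^{R\times d})$. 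Squaring, using $\|\boldsymbol{\varphi}\|_{\mathrm{F}}=\sqrt R$, $t\le\delta\le1$, and the third term in \eqref{eq:def-delta}, one reads off both $\sup_{t\le\delta}\|\boldsymbol{U}^{(n+1)}_t\|_{\mathrm{F}}^{2}\le3R$ and $\boldsymbol{U}^{(n+1)}_t\in B_{\eta}(\boldsymbol{\varphi})$, i.e.\ $\boldsymbol{U}^{(n+1)}\in\mathbb{D}_{\mathrm{det}}$.

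For $\boldsymbol{Y}^{(n+1)}=F_2(\boldsymbol{U}^{(n)},\boldsymbol{Y}^{(n)})$, adaptedness and a.s.\ path continuity follow from those of the Lebesgue and Itô integrals and the $\mathcal{F}_0$-measurability of $\boldsymbol{\xi}$. Writing $\boldsymbol{Y}^{(n+1)}_t=\boldsymbol{\xi}+\int_0^t\boldsymbol{U}^{(n)}_s a(s,X^{(n)}_s)\,ds+\int_0^t\boldsymbol{U}^{(n)}_s b(s,X^{(n)}_s)\,dW_s$, applying $(x+y+z)^{2}\le3(x^{2}+y^{2}+z^{2})$, estimating the drift term by Cauchy--Schwarz in time and the martingale term by Doob's $L^2$ inequality and Itô's isometry, and inserting $\|\boldsymbol{U}^{(n)}_s\|_{\mathrm{F}}^{2}\le3R$ together with the a priori bound, I obtain
\[
\mathbb{E}\Bigl[\sup_{0\le t\le\delta}|\boldsymbol{Y}^{(n+1)}_t|^{2}\Bigr]\le3\rho^{2}+c\,\delta\,R\,C_{\mathrm{lgb}}\bigl(1+3R(3\rho^{2}+1)\bigr)
\]
for an absolute constant $c$; this already shows $\boldsymbol{Y}^{(n+1)}\in L^{2}(\Omega;C([0,\delta];\mathbb{R}^{R}))$, and a parallel but simpler estimate (no supremum, no Doob, only Itô's isometry at fixed $t$) bounds $\|\boldsymbol{Y}^{(n+1)}_t-\boldsymbol{\xi}\|_{[L^{2}(\Omega)]^{R}}^{2}$ by $c'\,\delta\,R\,C_{\mathrm{lgb}}(1+3R(3\rho^{2}+1))$. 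The second term in \eqref{eq:def-delta} is exactly small enough that the right-hand sides are $\le3\rho^{2}+1$ and $\le\eta^{2}$, respectively, whence $\boldsymbol{Y}^{(n+1)}\in\mathbb{D}_{\mathrm{sto}}$; this closes the induction.

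I expect the only real difficulty to be bookkeeping rather than ideas: one must carry every constant explicitly through the estimates above to see that the three terms in the definition \eqref{eq:def-delta} of $\delta$ dominate precisely the quantities that appear, and one must remember that the Picard iterates $\boldsymbol{U}^{(n)}$ are \emph{not} orthonormal (unlike the true DO factor), so every bound involving $C^{-1}_{\boldsymbol{Y}^{(n)}_s}$ or $P^{\mathrm{row}}_{\boldsymbol{U}^{(n)}_s}$ has to be extracted from membership in the balls $B_{\eta}(\boldsymbol{\varphi}),B_{\eta}(\boldsymbol{\xi})$ via Proposition~\ref{prop: A inverse} and Lemma~\ref{lem: lip orth proj}, never from orthonormality. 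A minor subsidiary point is to justify that $F_1(\boldsymbol{U}^{(n)},\boldsymbol{Y}^{(n)})(\cdot)$ is continuous in $t$, which follows once the integrand is seen to be uniformly bounded on $[0,\delta]$ by the estimates above.
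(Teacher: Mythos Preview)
Your proposal is correct and follows essentially the same induction argument as the paper: verify the base case, then use the membership $(\boldsymbol{U}^{(n)},\boldsymbol{Y}^{(n)})\in\mathbb{D}_{\mathrm{det}}\times\mathbb{D}_{\mathrm{sto}}$ together with Proposition~\ref{prop: A inverse}, the linear-growth bound, Doob's maximal inequality and It\^o's isometry to push the four required estimates through for $(\boldsymbol{U}^{(n+1)},\boldsymbol{Y}^{(n+1)})$. The only cosmetic difference is that the paper also controls $\mathbb{E}[\sup_{t\le\delta}|\boldsymbol{Y}^{(n+1)}_t-\boldsymbol{\xi}|^{2}]$ via Doob rather than your fixed-$t$ bound, but since $\mathbb{D}_{\mathrm{sto}}$ only asks for $\boldsymbol{Z}_t\in B_\eta(\boldsymbol{\xi})$ pointwise in $t$, your simpler route suffices.
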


\begin{proof}
{We have $\|\boldsymbol{\varphi}\|_{\mathrm{F}}^{2}=R$, $\|\boldsymbol{\xi}\|_{[L^{2}(\Omega)]^{R}}^{2}=\rho^2$, $\|{C}_{\boldsymbol{\xi}}^{-1}\|_{\mathrm{F}}=\gamma$,
so, trivially, $\boldsymbol{\xi}\in B_{\eta}(\boldsymbol{\xi})$ and $\boldsymbol{\varphi}\in B_{\eta}(\boldsymbol{\varphi})$, where $\eta=\eta(R,\rho,\gamma)$ is built as in \eqref{eq: eta}}. Moreover,
$\boldsymbol{\xi}$ is $\mathcal{F}_{t}$-adapted thanks to Assumption 3. Thus, $(\boldsymbol{U}^{(0)},\boldsymbol{Y}^{(0)})\in\mathbb{D}_{\mathrm{det}}\times\mathbb{D}_{\mathrm{sto}}$.
Assume $(\boldsymbol{U}^{(n)},\boldsymbol{Y}^{(n)})\in\mathbb{D}_{\mathrm{det}}\times\mathbb{D}_{\mathrm{sto}}$
for $n\in\mathbb{N}$. Then, from Assumption 2, we see that $\boldsymbol{U}^{(n+1)}$
and $\boldsymbol{Y}^{(n+1)}$ are well defined, and that $\boldsymbol{Y}^{(n+1)}$
is $\mathcal{F}_{t}$-adapted. 
Moreover, $\mathbb{E}\bigl[\sup\limits_{0\leq t\leq\delta}|\boldsymbol{Y}_{t}^{(n)}|^{2}\bigr]<\infty$ and the inequality $\|A B\|_F \leq \|A\|_2\|B\|_F$ for $A \in \mathbb{R}^{n \times m}$, $B \in \mathbb{R}^{m \times p}$ implies
\begin{align*}
	\mathbb{E}\Bigl[ & \sup_{0\leq t\leq\delta}|\boldsymbol{Y}_{t}^{(n+1)}|^{2}\Bigr]\\
	& \leq3\mathbb{E}\bigl[|\boldsymbol{\xi}|^{2}\bigr]+3\mathbb{E}\biggl[\sup_{0\leq t\leq\delta}t\!\int_{0}^{t}\|\boldsymbol{U}_{s}^{(n)}\|_{2}^{2}|a(s,X_{s}^{(n)})|^{2}\,ds+\sup_{0\leq t\leq\delta}\Bigl|\int_{0}^{t}\boldsymbol{U}_{s}^{(n)}b(s,X_{s}^{(n)})dW_{s}\Bigr|^{2}\biggr]\\
	& {\leq3\mathbb{E}\bigl[|\boldsymbol{\xi}|^{2}\bigr]+3\mathbb{E}\biggl[\sup_{0\leq t\leq\delta}t\!\int_{0}^{t}\|\boldsymbol{U}_{s}^{(n)}\|_{2}^{2}|a(s,X_{s}^{(n)})|^{2}\,ds\biggr]+12\mathbb{E}\biggl[\Bigl|\int_{0}^{\delta}\boldsymbol{U}_{s}^{(n)}b(s,X_{s}^{(n)})dW_{s}\Bigr|^{2}\biggr]}\\
	& \leq3\rho^2+3\mathbb{E}\biggl[3\delta R\int_{0}^{\delta}|a(s,X_{s}^{(n)})|^{2}\,ds\biggr]+12\mathbb{E}\Bigl[\int_{0}^{\delta}\|\boldsymbol{U}_{s}^{(n)}b(s,X_{s}^{(n)})\|_{\mathrm{F}}^{2}\,ds\Bigr]\\
	& \leq3\rho^2+36 RC_{\mathrm{lgb}}(1+\mathbb{E}[\sup_{0\leq s\leq\delta}|X_{s}^{(n)}|^{2}])\delta\\
	& \leq3\rho^2+36RC_{\mathrm{lgb}}(1+3R(3\rho^2+1))\delta\le 3\rho^2+1,
\end{align*}
where {in the second and in the third lines we employ the 
	Doob's martingale inequality and Itô's isometry, respectively,} in the penultimate line we used $\delta\leq 1$, and in the last line  $|a(s,x)|^{2}+\|b(s,x)\|_{\mathrm{F}}^{2}\leq C_{\mathrm{lgb}}(1+|x|^{2})$
together with the definition of $\delta$. Similarly, we have
\begin{align*}
	\mathbb{E}\Bigl[\sup_{0\leq t\leq\delta}|\boldsymbol{Y}_{t}^{(n+1)}-\boldsymbol{\xi}|^{2}\Bigr] & \leq2\mathbb{E}\biggl[\sup_{0\leq t\leq\delta}t\int_{0}^{t}\|\boldsymbol{U}_{s}^{(n)}\|_{2}^{2}|a(s,X_{s}^{(n)})|^{2}\,ds+\sup_{0\leq t\leq\delta}\Bigl|\int_{0}^{t}\boldsymbol{U}_{s}^{(n)}b(s,X_{s}^{(n)})dW_{s}\Bigr|^{2}\biggr]\\
	& \leq 24C_{\mathrm{lgb}}R(1+3R(3\rho^2+1))\delta \leq\eta^2.
\end{align*}
We readily have $\boldsymbol{Y}^{(n+1)}\in L^{2}(\Omega;C([0,\delta];\mathbb{R}^{R}))$
and hence $\boldsymbol{Y}^{(n+1)}\in\mathbb{D}_{\mathrm{sto}}$.
Likewise, we have $\boldsymbol{U}^{(n+1)}\in\mathbb{D}_{\mathrm{det}}$,
since 
\begin{align*}
	\sup_{0\leq t\leq\delta}\|\boldsymbol{U}_{t}^{(n+1)}\|_{\mathrm{F}}^{2} & \leq2R+2\sup_{0\leq t\leq\delta}t\int_{0}^{t}\|{C}_{\boldsymbol{Y}_{s}^{(n)}}^{-1}\mathbb{E}[\boldsymbol{Y}_{s}^{(n)}a(s,X_{s}^{(n)})^{\top}](I_{d\times d}-P_{\boldsymbol{U}^{(n)}_{s}}^{\mathrm{row}})\|_{\mathrm{F}}^{2}\,ds\\
	& {\leq2R+8\delta\gamma^2\int_{0}^{\delta}\mathbb{E}[\sup_{0\leq s\leq\delta}|\boldsymbol{Y}_{s}^{(n)}|^{2}]\mathbb{E}[\sup_{0\leq s\leq\delta}|a(s,X_{s}^{(n)})|^{2}]\,ds}\\
	& \leq2R+8\delta\gamma^2(3\rho^2+1) C_{\mathrm{lgb}}(1+3R(3\rho^2+1))\\
	&\leq3R,
\end{align*}
where in the second inequality we have used the fact that $P_{\boldsymbol{U}^{(n)}_{s}}^{\mathrm{row}}$ is an orthogonal projector. Finally,
\begin{equation*}
	\sup_{0\leq t\leq\delta}\|\boldsymbol{U}_{t}^{(n+1)}-\boldsymbol{\varphi}\|^2_{\mathrm{F}}\leq 4\gamma^2(3\rho^2+1) C_{\mathrm{lgb}}(1+3R(3\rho^2+1))\delta\leq\eta^2.
\end{equation*}
Thus, by induction we conclude $(\boldsymbol{U}^{(n)},\boldsymbol{Y}^{(n)})\in\mathbb{D}_{\mathrm{det}}\times\mathbb{D}_{\mathrm{sto}}$
for $n\in\mathbb{N}$.
\end{proof}
We now establish a Lipschitz continuity for $F_1$ and $F_2$ on $\mathbb{D}_{\mathrm{det}}\times\mathbb{D}_{\mathrm{sto}}$.
\begin{lem}\label{lem: Lipsc Continuity}
Take $\delta>0$ as in \eqref{eq:def-delta}. There exists a constant $\tilde{C}:=\tilde{C}_{a,b,R,\rho,\delta,\gamma}>0$ such that for any $(\boldsymbol{V},\boldsymbol{Z}),(\tilde{\boldsymbol{V}},\tilde{\boldsymbol{Z}})\in\mathbb{D}_{\mathrm{det}}\times\mathbb{D}_{\mathrm{sto}}$ it holds
\begin{equation}
	\begin{aligned}
		\sup_{t\in[0,\delta]} \|F_{1}(\boldsymbol{V},\boldsymbol{Z})(t)-F_{1}(\tilde{\boldsymbol{V}}&,\tilde{\boldsymbol{Z}})(t)\|^{2}_{\mathrm{F}} 
		+ 
		\mathbb{E}\biggl[\sup_{t\in[0,\delta]} |F_{2}(\boldsymbol{V},\boldsymbol{Z})(t)-F_{2}(\tilde{\boldsymbol{V}},\tilde{\boldsymbol{Z}})(t)|^{2}\biggr]\\
		& \leq \tilde{C}\int_{0}^{\delta} \left( \sup_{t\in[0,s]}\|\boldsymbol{V}_{t}-\tilde{\boldsymbol{V}}_{t}\|^{2}_{\mathrm{F}}+\mathbb{E}[\sup_{t\in[0,s]}|\boldsymbol{Z}_{t}-\tilde{\boldsymbol{Z}}_{t}|^{2}]\right)ds
	\end{aligned}
\end{equation}  
\begin{proof}
For any $(\boldsymbol{V},\boldsymbol{Z}),(\tilde{\boldsymbol{V}},\tilde{\boldsymbol{Z}})\in\mathbb{D}_{\mathrm{det}}\times\mathbb{D}_{\mathrm{sto}}$, 
	from Doob's martingale inequality and Itô's isometry {applied similarly as in the proof of Lemma 3.1,} we have
\begin{align*}
		\mathbb{E}\biggl[\sup_{t\in[0,\delta]} & |F_{2}(\boldsymbol{V},\boldsymbol{Z})(t)-F_{2}(\tilde{\boldsymbol{V}},\tilde{\boldsymbol{Z}})(t)|^{2}\biggr]\\
		& \leq2\mathbb{E}\biggl[\sup_{t\in[0,\delta]}|\int_{0}^{t}\boldsymbol{V}_{s}a(s,\boldsymbol{V}_{s}^{\top}\boldsymbol{Z}_{s})\,ds-\int_{0}^{t}\tilde{\boldsymbol{V}}_{s}a(s,\tilde{\boldsymbol{V}}_{s}^{\top}\tilde{\boldsymbol{Z}}_{s})\,ds|^{2}\biggr]\\
		& \ \ \ \ +2\mathbb{E}\biggl[\sup_{t\in[0,\delta]}\Bigl|\int_{0}^{t}\bigl(\boldsymbol{V}_{s}b(s,\boldsymbol{V}_{s}^{\top}\boldsymbol{Z}_{s})-\tilde{\boldsymbol{V}}_{s}b(s,\tilde{\boldsymbol{V}}_{s}^{\top}\tilde{\boldsymbol{Z}}_{s})\bigr)dW_{s}\Bigr|^{2}\biggr]\\
		& \leq2\mathbb{E}\biggl[\sup_{t\in[0,\delta]}\int_{0}^{t}|\boldsymbol{V}_{s}a(s,\boldsymbol{V}_{s}^{\top}\boldsymbol{Z}_{s})-\tilde{\boldsymbol{V}}_{s}a(s,\tilde{\boldsymbol{V}}_{s}^{\top}\tilde{\boldsymbol{Z}}_{s})|^{2}\,ds\biggr]\\
		& \ \ \ \ +8\mathbb{E}\biggl[\int_{0}^{\delta}\|\boldsymbol{V}_{s}b(s,\boldsymbol{V}_{s}^{\top}\boldsymbol{Z}_{s})-\tilde{\boldsymbol{V}}_{s}b(s,\tilde{\boldsymbol{V}}_{s}^{\top}\tilde{\boldsymbol{Z}}_{s})\|_{\mathrm{F}}^{2}\,ds\biggr]\\
		& \leq C_{a,b,R,\rho,\delta}\int_{0}^{\delta}\bigl(\|\boldsymbol{V}_{s}-\tilde{\boldsymbol{V}}_{s}\|^{2}_{\mathrm{F}}+\mathbb{E}[|\boldsymbol{Z}_{s}-\tilde{\boldsymbol{Z}}_{s}|^{2}]\bigr)\,ds\\
		& \leq  C_{a,b,R,\rho,\delta}\int_{0}^{\delta}(\sup_{r\in[0,s]}\|\boldsymbol{V}_{r}-\tilde{\boldsymbol{V}}_{r}\|^{2}_{\mathrm{F}}+\mathbb{E}[\sup_{r\in[0,s]}|\boldsymbol{Z}_{r}-\tilde{\boldsymbol{Z}}_{r}|^{2}])ds,
	\end{align*}
	where $C_{a,b,R,\rho,\delta}$ is a positive constant. 
Similarly, we have for a constant $C_{a,b,R,\rho,\delta,\gamma} >0$ that 
	\begin{equation*}
		\sup_{t\in[0,\delta]} \|F_{1}(\boldsymbol{V},\boldsymbol{Z})(t)-F_{1}(\tilde{\boldsymbol{V}},\tilde{\boldsymbol{Z}})(t)\|^{2}_{\mathrm{F}} \leq C_{a,b,R,\rho,\delta,\gamma} \int_{0}^{\delta}(\sup_{r\in[0,s]} \|\boldsymbol{V}_{r}-\tilde{\boldsymbol{V}}_{r}\|^{2}_{\mathrm{F}}+\mathbb{E}[\sup_{r\in[0,s]}|\boldsymbol{Z}_{r}-\tilde{\boldsymbol{Z}}_{r}|^{2}])ds,
	\end{equation*}
	where we used the Lipschitz continuity of ${C}_{\boldsymbol{Y}_{s}}^{-1}$
	and $P^{\mathrm{row}}_{\boldsymbol{U}_{s}}$; see \cite[Lemma~3.5]{kazashi2021existence} and Lemma~\ref{lem: lip orth proj}.
\end{proof}
\end{lem}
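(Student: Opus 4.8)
The plan is to estimate the $F_2$-contribution and the $F_1$-contribution separately; in each case I isolate the difference of the integrands and split it, by a telescoping trick, into a piece controlled by $\|\boldsymbol{V}_s-\tilde{\boldsymbol{V}}_s\|_{\mathrm{F}}$ and a piece controlled by $\mathbb{E}[|\boldsymbol{Z}_s-\tilde{\boldsymbol{Z}}_s|^2]$, using throughout the uniform bounds built into $\mathbb{D}_{\mathrm{det}}$ (so $\|\boldsymbol{V}_s\|_{\mathrm{F}}^2\le 3R$), $\mathbb{D}_{\mathrm{sto}}$ (so $\mathbb{E}[\sup_s|\boldsymbol{Z}_s|^2]\le 3\rho^2+1$), and the construction of $\eta$ (so $\|{C}_{\boldsymbol{Z}_s}^{-1}\|_{\mathrm{F}}\le 2\gamma$ on $B_{\eta_2}(\boldsymbol{\xi})$ and $\|(\boldsymbol{V}_s\boldsymbol{V}_s^{\top})^{-1}\|_{\mathrm{F}}\le 2$ on $B_{\eta_1}(\boldsymbol{\varphi})$).

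For $F_2$, write the difference as the sum of a Bochner integral of $\boldsymbol{V}_s a(s,\boldsymbol{V}_s^{\top}\boldsymbol{Z}_s)-\tilde{\boldsymbol{V}}_s a(s,\tilde{\boldsymbol{V}}_s^{\top}\tilde{\boldsymbol{Z}}_s)$ and a stochastic integral of the analogous expression with $b$. On the stochastic integral I apply Doob's $L^2$ maximal inequality and then It\^o's isometry to bring the $\sup_t$ inside and reduce to $\mathbb{E}\int_0^{\delta}\|\boldsymbol{V}_s b(s,\boldsymbol{V}_s^{\top}\boldsymbol{Z}_s)-\tilde{\boldsymbol{V}}_s b(s,\tilde{\boldsymbol{V}}_s^{\top}\tilde{\boldsymbol{Z}}_s)\|_{\mathrm{F}}^2\,ds$; on the Bochner integral, Jensen's/Cauchy--Schwarz's inequality together with $\delta\le1$ gives the same type of integrand. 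Both integrands are then treated identically: adding and subtracting $\tilde{\boldsymbol{V}}_s g(s,\boldsymbol{V}_s^{\top}\boldsymbol{Z}_s)$ (with $g\in\{a,b\}$), the first piece $(\boldsymbol{V}_s-\tilde{\boldsymbol{V}}_s)g(s,\boldsymbol{V}_s^{\top}\boldsymbol{Z}_s)$ is bounded, using the linear growth of Assumption~\ref{assump:lin-growth} and $|\boldsymbol{V}_s^{\top}\boldsymbol{Z}_s|^2\le 3R|\boldsymbol{Z}_s|^2$, by $C_{\mathrm{lgb}}(1+3R(3\rho^2+1))\,\|\boldsymbol{V}_s-\tilde{\boldsymbol{V}}_s\|_{\mathrm{F}}^2$ in expectation — here it is essential that $\|\boldsymbol{V}_s-\tilde{\boldsymbol{V}}_s\|_{\mathrm{F}}$ is deterministic and factors out of $\mathbb{E}$; the second piece $\tilde{\boldsymbol{V}}_s(g(s,\boldsymbol{V}_s^{\top}\boldsymbol{Z}_s)-g(s,\tilde{\boldsymbol{V}}_s^{\top}\tilde{\boldsymbol{Z}}_s))$ is handled by Assumption~\ref{assump:Lip} and the elementary bound $|\boldsymbol{V}_s^{\top}\boldsymbol{Z}_s-\tilde{\boldsymbol{V}}_s^{\top}\tilde{\boldsymbol{Z}}_s|\le\|\boldsymbol{V}_s-\tilde{\boldsymbol{V}}_s\|_{\mathrm{F}}|\boldsymbol{Z}_s|+\|\tilde{\boldsymbol{V}}_s\|_{\mathrm{F}}|\boldsymbol{Z}_s-\tilde{\boldsymbol{Z}}_s|$, which after squaring, taking $\mathbb{E}$ and using the $\mathbb{D}$-bounds yields a constant times $(\|\boldsymbol{V}_s-\tilde{\boldsymbol{V}}_s\|_{\mathrm{F}}^2+\mathbb{E}[|\boldsymbol{Z}_s-\tilde{\boldsymbol{Z}}_s|^2])$. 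Replacing $\|\boldsymbol{V}_s-\tilde{\boldsymbol{V}}_s\|_{\mathrm{F}}^2\le\sup_{r\in[0,s]}\|\boldsymbol{V}_r-\tilde{\boldsymbol{V}}_r\|_{\mathrm{F}}^2$ (and likewise for $\boldsymbol{Z}$) and integrating in $s$ gives the $F_2$ part with a constant $C_{a,b,R,\rho,\delta}$.

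For $F_1$ the difference is deterministic (everything sits under an expectation), so no martingale inequality is needed: Cauchy--Schwarz gives $\sup_t\|\cdot\|_{\mathrm{F}}^2\le\delta\int_0^{\delta}\|{C}_{\boldsymbol{Z}_s}^{-1}\mathbb{E}[\boldsymbol{Z}_s a(s,\boldsymbol{V}_s^{\top}\boldsymbol{Z}_s)^{\top}](I_{d\times d}-P^{\mathrm{row}}_{\boldsymbol{V}_s})-{C}_{\tilde{\boldsymbol{Z}}_s}^{-1}\mathbb{E}[\tilde{\boldsymbol{Z}}_s a(s,\tilde{\boldsymbol{V}}_s^{\top}\tilde{\boldsymbol{Z}}_s)^{\top}](I_{d\times d}-P^{\mathrm{row}}_{\tilde{\boldsymbol{V}}_s})\|_{\mathrm{F}}^2\,ds$, and I decompose the integrand by a three-term telescoping over the three factors (Gramian inverse; the matrix $\mathbb{E}[\boldsymbol{Z}_s a(s,\boldsymbol{V}_s^{\top}\boldsymbol{Z}_s)^{\top}]$; the orthogonal-complement projector). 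The first factor's increment is controlled by the local Lipschitz continuity of $\boldsymbol{Z}\mapsto{C}_{\boldsymbol{Z}}^{-1}$ on $\Gamma$ (\cite[Lemma~3.5]{kazashi2021existence}), whose constant depends only on the uniform bound $\|{C}_{\boldsymbol{Z}_s}^{-1}\|_{\mathrm{F}}\le2\gamma$ on $B_{\eta_2}(\boldsymbol{\xi})$, so $\|{C}_{\boldsymbol{Z}_s}^{-1}-{C}_{\tilde{\boldsymbol{Z}}_s}^{-1}\|_{\mathrm{F}}\le C\,(\mathbb{E}[|\boldsymbol{Z}_s-\tilde{\boldsymbol{Z}}_s|^2])^{1/2}$; the third factor's increment uses the Lipschitz continuity of $\boldsymbol{V}\mapsto P^{\mathrm{row}}_{\boldsymbol{V}}$ on $B_{\eta_1}(\boldsymbol{\varphi})$ (Lemma~\ref{lem: lip orth proj}, valid since $\|(\boldsymbol{V}_s\boldsymbol{V}_s^{\top})^{-1}\|_{\mathrm{F}}\le2$ there), so $\|P^{\mathrm{row}}_{\boldsymbol{V}_s}-P^{\mathrm{row}}_{\tilde{\boldsymbol{V}}_s}\|_{\mathrm{F}}\le C\,\|\boldsymbol{V}_s-\tilde{\boldsymbol{V}}_s\|_{\mathrm{F}}$; the middle factor's increment is estimated exactly as in the $F_2$ argument (add and subtract $\mathbb{E}[\tilde{\boldsymbol{Z}}_s a(s,\boldsymbol{V}_s^{\top}\boldsymbol{Z}_s)^{\top}]$, use $\|\mathbb{E}[M]\|_{\mathrm{F}}\le\mathbb{E}[\|M\|_{\mathrm{F}}]$, $\|uv^{\top}\|_{\mathrm{F}}=|u|\,|v|$, Cauchy--Schwarz, linear growth, and the split of $|\boldsymbol{V}_s^{\top}\boldsymbol{Z}_s-\tilde{\boldsymbol{V}}_s^{\top}\tilde{\boldsymbol{Z}}_s|$). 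The remaining ``frozen'' factors in each telescoping term are uniformly bounded ($\|{C}_{\cdot}^{-1}\|_{\mathrm{F}}\le2\gamma$, $\|\mathbb{E}[\cdot\,a(\cdots)^{\top}]\|_{\mathrm{F}}\le(3\rho^2+1)^{1/2}(C_{\mathrm{lgb}}(1+3R(3\rho^2+1)))^{1/2}$, $\|I_{d\times d}-P^{\mathrm{row}}_{\cdot}\|_2=1$), so squaring, summing the three contributions and integrating over $[0,\delta]$ yields the $F_1$ part with a constant $C_{a,b,R,\rho,\delta,\gamma}$. Adding the two estimates and setting $\tilde C:=C_{a,b,R,\rho,\delta}+C_{a,b,R,\rho,\delta,\gamma}$ finishes the proof. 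The main obstacle is this $F_1$ estimate: the threefold product forces the telescoping and one must keep track that every boundedness invoked is \emph{uniform} over the balls $B_{\eta}$ — which is precisely what the choices of $\eta$ via Proposition~\ref{prop: A inverse} and of $\mathbb{D}_{\mathrm{det}},\mathbb{D}_{\mathrm{sto}}$ were arranged to provide; a secondary but important bookkeeping point is that $\|\boldsymbol{V}_s-\tilde{\boldsymbol{V}}_s\|_{\mathrm{F}}$, being deterministic, passes freely through all expectations, keeping the deterministic and stochastic parts of the estimate cleanly separated.
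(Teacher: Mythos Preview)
Your proposal is correct and follows essentially the same approach as the paper's proof: Doob's maximal inequality plus It\^o's isometry for the $F_2$ stochastic integral, a telescoping split of $\boldsymbol{V}_s g(s,\boldsymbol{V}_s^{\top}\boldsymbol{Z}_s)-\tilde{\boldsymbol{V}}_s g(s,\tilde{\boldsymbol{V}}_s^{\top}\tilde{\boldsymbol{Z}}_s)$ handled via Assumptions~\ref{assump:Lip}--\ref{assump:lin-growth} and the $\mathbb{D}$-bounds, and for $F_1$ the three-factor telescoping together with the Lipschitz continuity of ${C}_{\boldsymbol{Z}}^{-1}$ (\cite[Lemma~3.5]{kazashi2021existence}) and of $P^{\mathrm{row}}_{\boldsymbol{V}}$ (Lemma~\ref{lem: lip orth proj}). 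The paper's proof simply says ``similarly'' for the $F_1$ part and cites the same two Lipschitz results; you have spelled out exactly the details the paper leaves implicit.
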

Thanks to the previous results, we have that sequences $(\boldsymbol{U}^{(n)})_n$ and $(\boldsymbol{Y}^{(n)})_n$ not only live in $\mathbb{D}_{\mathrm{det}}$ and $\mathbb{D}_{\mathrm{sto}}$, respectively, but also converge therein.
\begin{lem}
\label{lem:sq-conv}The sequence $(\boldsymbol{U}^{(n)})_{n}$
admits a limit $\boldsymbol{U}\in\mathbb{D}_{\mathrm{det}}\subset C([0,\delta];\mathbb{R}^{R\times d})$,
and the sequence $(\boldsymbol{Y}^{(n)})_{n}$ admits a limit $\boldsymbol{Y}\in\mathbb{D}_{\mathrm{sto}}\subset C([0,\delta];\mathbb{R}^{R})$
almost surely. Moreover, $\boldsymbol{Y}$ is also an $L^{2}(\Omega;C([0,\delta];\mathbb{R}^{R}))$-limit.
\end{lem}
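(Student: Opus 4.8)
The plan is to run the Banach fixed-point closure now that the two preparatory lemmas are in place: Lemma~\ref{lem:sq-in-D} keeps every iterate inside $\mathbb{D}_{\mathrm{det}}\times\mathbb{D}_{\mathrm{sto}}$, and Lemma~\ref{lem: Lipsc Continuity} provides a Lipschitz-in-time estimate for $(F_1,F_2)$ on that set. For $n\ge0$ and $s\in[0,\delta]$ I would introduce the combined increment
\[
\Delta_n(s):=\sup_{t\in[0,s]}\|\boldsymbol{U}^{(n+1)}_t-\boldsymbol{U}^{(n)}_t\|_{\mathrm{F}}^{2}+\mathbb{E}\Bigl[\sup_{t\in[0,s]}\bigl|\boldsymbol{Y}^{(n+1)}_t-\boldsymbol{Y}^{(n)}_t\bigr|^{2}\Bigr].
\]
Since all iterates lie in $\mathbb{D}_{\mathrm{det}}\times\mathbb{D}_{\mathrm{sto}}$ and $\boldsymbol{U}^{(n+1)}=F_1(\boldsymbol{U}^{(n)},\boldsymbol{Y}^{(n)})$, $\boldsymbol{Y}^{(n+1)}=F_2(\boldsymbol{U}^{(n)},\boldsymbol{Y}^{(n)})$, Lemma~\ref{lem: Lipsc Continuity}---whose proof works verbatim on each subinterval $[0,s]\subseteq[0,\delta]$ with a constant non-decreasing in $s$, hence bounded by $\tilde C$---yields the convolution-type recursion
\[
\Delta_n(s)\le\tilde C\int_0^{s}\Delta_{n-1}(r)\,dr,\qquad n\ge1,\ s\in[0,\delta].
\]

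Iterating this bound (using that $s\mapsto\Delta_0(s)$ is non-decreasing) gives the factorial decay $\Delta_n(\delta)\le(\tilde C\delta)^{n}\Delta_0(\delta)/n!$, with $\Delta_0(\delta)<\infty$ because $(\boldsymbol{U}^{(1)},\boldsymbol{Y}^{(1)})\in\mathbb{D}_{\mathrm{det}}\times\mathbb{D}_{\mathrm{sto}}$ while the zeroth iterate is the constant pair $(\boldsymbol{\varphi},\boldsymbol{\xi})$; hence $\sum_{n\ge0}\Delta_n(\delta)^{1/2}<\infty$. From $\sup_{t\in[0,\delta]}\|\boldsymbol{U}^{(n+1)}_t-\boldsymbol{U}^{(n)}_t\|_{\mathrm{F}}\le\Delta_n(\delta)^{1/2}$ the sequence $(\boldsymbol{U}^{(n)})_n$ is Cauchy in the Banach space $(C([0,\delta];\mathbb{R}^{R\times d}),\|\cdot\|_\infty)$ and converges uniformly to a continuous limit $\boldsymbol{U}$, and from $\bigl\|\sup_{t\in[0,\delta]}|\boldsymbol{Y}^{(n+1)}_t-\boldsymbol{Y}^{(n)}_t|\bigr\|_{L^2(\Omega)}\le\Delta_n(\delta)^{1/2}$ the sequence $(\boldsymbol{Y}^{(n)})_n$ is Cauchy in, hence converges in, the Banach space $L^2(\Omega;C([0,\delta];\mathbb{R}^R))$ to some $\boldsymbol{Y}$. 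For the pathwise statement I would upgrade this via monotone convergence: $\mathbb{E}\bigl[\sum_n\sup_{t\in[0,\delta]}|\boldsymbol{Y}^{(n+1)}_t-\boldsymbol{Y}^{(n)}_t|\bigr]\le\sum_n\Delta_n(\delta)^{1/2}<\infty$, so $\sum_n\sup_{t\in[0,\delta]}|\boldsymbol{Y}^{(n+1)}_t-\boldsymbol{Y}^{(n)}_t|<\infty$ almost surely, whence for a.e.\ $\omega$ the path functions $s\mapsto\boldsymbol{Y}^{(n)}_s(\omega)$ form a uniformly Cauchy sequence in $C([0,\delta];\mathbb{R}^R)$ and converge uniformly to a continuous path, which coincides with the $L^2(\Omega;C)$-limit up to indistinguishability.

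It then remains to check that the limits stay in the prescribed sets, which is immediate from closedness: $\sup_t\|\boldsymbol{U}_t\|_{\mathrm{F}}^2\le3R$ and $\|\boldsymbol{U}_t-\boldsymbol{\varphi}\|_{\mathrm{F}}\le\eta$ for all $t$ survive uniform limits, so $\boldsymbol{U}\in\mathbb{D}_{\mathrm{det}}$; $(\mathcal{F}_t)$-adaptedness of $\boldsymbol{Y}$ is inherited from the adapted iterates through the a.s.\ (equivalently $L^2$) limit, $\mathbb{E}[\sup_t|\boldsymbol{Y}_t|^2]\le3\rho^2+1$ follows from lower semicontinuity of the $L^2(\Omega;C)$-norm, and $\|\boldsymbol{Y}_t-\boldsymbol{\xi}\|_{L^2(\Omega)}\le\eta$ for each $t$ since closed $L^2$-balls are $L^2$-closed, so $\boldsymbol{Y}\in\mathbb{D}_{\mathrm{sto}}$. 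The only point requiring a little care is the localization in the first step---reading Lemma~\ref{lem: Lipsc Continuity} on each $[0,s]$ with a constant uniformly bounded by $\tilde C$---so that the recursion is of convolution type and the factorial decay holds on the \emph{entire} interval $[0,\delta]$ with no extra smallness assumption on $\tilde C\delta$; everything downstream is standard completeness bookkeeping.
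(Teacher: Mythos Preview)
Your proposal is correct and follows essentially the same approach as the paper's proof: both iterate the Lipschitz estimate of Lemma~\ref{lem: Lipsc Continuity} to obtain the factorial decay $\Delta_n(\delta)\le(\tilde C\delta)^n\Delta_0(\delta)/n!$, deduce Cauchy convergence in $C([0,\delta];\mathbb{R}^{R\times d})$ and $L^2(\Omega;C([0,\delta];\mathbb{R}^R))$, and verify that the limits remain in $\mathbb{D}_{\mathrm{det}}\times\mathbb{D}_{\mathrm{sto}}$ via Fatou/closedness. The only cosmetic difference is that the paper obtains almost-sure pathwise convergence of $(\boldsymbol{Y}^{(n)})_n$ via Chebyshev plus Borel--Cantelli, whereas you use the equivalent (and slightly slicker) route $\mathbb{E}\bigl[\sum_n\sup_t|\boldsymbol{Y}^{(n+1)}_t-\boldsymbol{Y}^{(n)}_t|\bigr]<\infty$.
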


\begin{proof}
From Lemma~\ref{lem:sq-in-D} we have $(\boldsymbol{U}^{(n)},\boldsymbol{Y}^{(n)})\in\mathbb{D}_{\mathrm{det}}\times\mathbb{D}_{\mathrm{sto}}$ 
for all $n\in\mathbb{N}$. 
Let 
$\Delta_{U}^{(n)}(s):=\sup\limits_{0\leq r\leq s}\|\boldsymbol{U}_{r}^{(n)}-\boldsymbol{U}_{r}^{(n-1)}\|_{\mathrm{F}}^{2}$. 
Then, since $\|\boldsymbol{U}_{}^{(n)}-\boldsymbol{U}_{}^{(n-1)}\|_{\mathrm{F}}^{2}$ is continuous on $[0,\delta]$, so is $\Delta_{U}^{(n)}$, and thus $\Delta_{U}^{(n)}$ is measurable. 
Similarly, $\Delta_{Y}^{(n)}(s):=\sup\limits_{0\leq r\leq s}|\boldsymbol{Y}_{r}^{(n)}-\boldsymbol{Y}_{r}^{(n-1)}|^{2}$ is a.s.~continuous on $[0,\delta]$.
Noting that  $\boldsymbol{Y}^{(n)}\in\mathbb{D}_{\mathrm{sto}}$
implies $\|{C}_{\boldsymbol{Y}_{t}^{(n)}}^{-1}\|_{\mathrm{F}}^{2}\leq2\gamma$, from Lemma~\ref{lem: Lipsc Continuity} we have
\begin{align}
\Delta_{U}^{(n)}(\delta)+\mathbb{E}[\Delta_{Y}^{(n)}(\delta)] & \leq\tilde{C}\int_{0}^{\delta}(\Delta_{U}^{(n-1)}(s)+\mathbb{E}[\Delta_{Y}^{(n-1)}(s)])\,ds\nonumber \\
& \leq\tilde{C}^{n-1}\int_{0}^{\delta}\int_{0}^{s_{n-1}}\dotsb\int_{0}^{s_{2}}(\Delta_{U}^{(1)}(s_1)+\mathbb{E}[\Delta_{Y}^{(1)}(s_1)])\,ds_{1}\dotsb ds_{n-1}\nonumber \\
& =\frac{(\tilde{C}\delta)^{n-1}}{(n-1)!}(\Delta_{U}^{(1)}(\delta)+\mathbb{E}[\Delta_{Y}^{(1)}(\delta)]).\label{eq:Y-increment}
\end{align}
Chebyshev's inequality then implies
\[
\sum_{n=1}^{\infty}\mathbb{P}\Bigl(\Delta_{U}^{(n)}(\delta)+\Delta_{Y}^{(n)}(\delta)\geq\frac{1}{2^{n}}\Bigr)\leq\left(\Delta_{U}^{(1)}(\delta)+\mathbb{E}[\Delta_{Y}^{(1)}(\delta)]\right)2\sum_{n=1}^{\infty}\frac{(2\tilde{C}\delta)^{n-1}}{(n-1)!}<\infty,
\]
and thus from the Borel-Cantelli lemma we have
\[
\mathbb{P}\Bigl(\Bigl\{\exists k=k(\omega)\text{ s.t. }n\geq k\implies\Delta_{U}^{(n)}(\delta)+\Delta_{Y}^{(n)}(\delta)<\frac{1}{2^{n}}\Bigr\}\Bigr)=1.
\]
Hence, $(\boldsymbol{Y}^{(n)}(\omega))_{n}$ has a limit $\boldsymbol{Y}(\omega)\in C([0,\delta];\mathbb{R}^{R})$, where the convergence is uniformly in $t$ on $[0,\delta]$, a.s.
Moreover, 
from the completeness of the underlying probability space, 
$\boldsymbol{Y}$ is $(\mathcal{F}_{t})$-adapted. 
Also, from
$\boldsymbol{Y}^{(n)}\in\mathbb{D}_{\mathrm{sto}}$, $n\in\mathbb{N}$,
Fatou's lemma implies 
\begin{align}
\mathbb{E}\Bigl[\sup_{0\leq t\leq\delta}|\boldsymbol{Y}_{t}|^{2}\Bigr] & \le\liminf_{n\to\infty}\mathbb{E}\Bigl[\sup_{0\leq t\leq\delta}|\boldsymbol{Y}_{t}^{(n)}|^{2}\Bigr]\leq4\rho,\label{eq:Y-bound}\\
\mathbb{E}\Bigl[\sup_{0\leq t\leq\delta}|\boldsymbol{Y}_{t}-\boldsymbol{\xi}|^{2}\Bigr] & \leq\eta^2,
\end{align}
and thus $\boldsymbol{Y}\in\mathbb{D}_{\mathrm{sto}}$. An
analogous argument applies to see that $(\boldsymbol{U}^{(n)})_{n}$
has a limit $\boldsymbol{U}$ in $C([0,\delta];\mathbb{R}^{R \times d})$
with $\boldsymbol{U}\in\mathbb{D}_{\mathrm{det}}$.

The sequences $\boldsymbol{Y}^{(n)}$ converge in $L^{2}(\Omega;C([0,\delta];\mathbb{R}^{R}))$
as well. Indeed, from \eqref{eq:Y-increment}, for $j>n$ we have
\begin{align*}
\sqrt{\mathbb{E}\Bigl[\sup_{0\leq t\leq\delta}|\boldsymbol{Y}_{t}^{(j)}-\boldsymbol{Y}_{t}^{(n)}|^{2}\Bigr]} & \leq\sum_{k=n}^{j-1}\sqrt{\mathbb{E}\Bigl[\sup_{0\leq t\leq\delta}|\boldsymbol{Y}_{t}^{(k+1)}-\boldsymbol{Y}_{t}^{(k)}|^{2}\Bigr]}\\
&\leq \sqrt{\Delta_{U}^{(1)}(\delta)+\mathbb{E}[\Delta_{Y}^{(1)}(\delta)]}\,\sum_{k=n}^{j-1}\sqrt{\frac{(\tilde{C}\delta)^{k-1}}{(k-1)!}}
\end{align*}
and thus Fatou's lemma implies
\[
\mathbb{E}\Bigl[\sup_{0\leq t\leq\delta}|\boldsymbol{Y}_{t}-\boldsymbol{Y}_{t}^{(n)}|^{2}\Bigr]\leq(\Delta_{U}^{(1)}(\delta)+\mathbb{E}[\Delta_{Y}^{(1)}(\delta)])\biggl(\sum_{k=n}^{\infty}\sqrt{\frac{(\tilde{C}\delta)^{k-1}}{(k-1)!}}\biggr)^{2}<\infty,
\]
hence $\lim\limits_{n\to\infty}\mathbb{E}\Bigl[\sup\limits_{0\leq t\leq\delta}|\boldsymbol{Y}_{t}-\boldsymbol{Y}_{t}^{(n)}|^{2}\Bigr]=0$.
\end{proof}
{
	The argument used in Lemmata \ref{lem:sq-in-D}-\ref{lem:sq-conv} to show convergence of the Picard iterates resembles the standard one used in establishing well-posedness of SDEs, however containing important differences. First, the map \eqref{eq:F1} to update $\boldsymbol{U}$ involves a ``stochastic projection" introducing a dependence on the law of the $\boldsymbol{Y}$ process. Second, the same projection involves the term $C^{-1}_{\boldsymbol{Y}}$ (inverse of the Gramian of the stochastic basis), which might not be defined for all iterates or it may fail to be Lipschitz continuous with respect to $\boldsymbol{Y}$. We thus have to restrict to a well-designed set, containing the initial state, in which the inverse of the Gramian is well-defined and Lipschitz continuous. 
	Lemma \ref{lem:sq-in-D} guarantees that the Picard iterates $(\boldsymbol{U}^{(n)},\boldsymbol{Y}^{(n)})$ are contained in such set for all $n$.  Lemma \ref{lem: Lipsc Continuity} shows the Lipschitz continuity of the mapping that defines the iterates on this set, and Lemma \ref{lem:sq-conv} assures that  $(\boldsymbol{U}^{(n)},\boldsymbol{Y}^{(n)})$ admit a limit in the same set. 
	We are now prepared to establish our existence result for the DO system, employing an argument that closely resembles the conventional approach used for SDEs.}
\begin{thm}[Existence of a DO solution]
\label{thm:local-existence}
For any {$(\boldsymbol{\varphi}, \boldsymbol{\xi})$, where $\boldsymbol{\varphi} \in  \mathbb{R}^{d \times R}$ has orthonormal rows and $ \boldsymbol{\xi} \in L^2(\Omega,\mathbb{R}^R)$ satisfies $\|\boldsymbol{\xi}\|_{[L^2(\Omega)]^R} = \rho > 0$ and $\|{C}_{\boldsymbol{\xi}}^{-1}\|_{\mathrm{F}}=\gamma > 0$, the DO equations \eqref{eq:DLR-eq-U} and \eqref{eq:DLR-eq-Y} with the initial condition $(\boldsymbol{U}_0, \boldsymbol{Y}_0)=(\boldsymbol{\varphi}, \boldsymbol{\xi})$} have a local (in time) strong DO solution $(\boldsymbol{U},\boldsymbol{Y}) \in\mathbb{D}_{\mathrm{det}}\times\mathbb{D}_{\mathrm{sto}}$ with $\delta$ given by \eqref{eq:def-delta}. 
\end{thm}

\begin{proof}
We show that the limit $(\boldsymbol{U},\boldsymbol{Y})$ in Lemma \ref{lem:sq-conv} satisfies $\boldsymbol{U}=F_{1}(\boldsymbol{U},\boldsymbol{Y})$ and $\boldsymbol{Y}=F_{2}(\boldsymbol{U},\boldsymbol{Y})$, hence it is a DO solution. 
From Lemma~\ref{lem: Lipsc Continuity}, $F_1$ and $F_2$ are (Lipschitz) continuous on $\mathbb{D}_{\mathrm{det}}\times\mathbb{D}_{\mathrm{sto}}$:
\begin{equation*}
\begin{aligned}
	\max\biggl\{\sup_{t\in[0,\delta]} \|F_{1}(\boldsymbol{V},\boldsymbol{Z})(t)-F_{1}(&\tilde{\boldsymbol{V}},\tilde{\boldsymbol{Z}})(t)\|^{2}_{\mathrm{F}} \,, \, \mathbb{E}\biggl[\sup_{t\in[0,\delta]} |F_{2}(\boldsymbol{V},\boldsymbol{Z})(t)-F_{2}(\tilde{\boldsymbol{V}},\tilde{\boldsymbol{Z}})(t)|^{2}\biggr]\biggr\}\\
	& \leq \tilde{C}\int_{0}^{\delta} \left( \sup_{t\in[0,s]}\|\boldsymbol{V}_{t}-\tilde{\boldsymbol{V}}_{t}\|^{2}_{\mathrm{F}}+\mathbb{E}[\sup_{t\in[0,s]}|\boldsymbol{Z}_{t}-\tilde{\boldsymbol{Z}}_{t}|^{2}]\right)ds \\
	& \leq \tilde{C}\delta \left( \sup_{t\in[0,\delta]}\|\boldsymbol{V}_{t}-\tilde{\boldsymbol{V}}_{t}\|^{2}_{\mathrm{F}}+\mathbb{E}[\sup_{t\in[0,\delta]}|\boldsymbol{Z}_{t}-\tilde{\boldsymbol{Z}}_{t}|^{2}]\right).
\end{aligned}
\end{equation*} 
Thus, we have
\begin{align*}
\mathbb{E}\biggl[\sup_{t\in[0,\delta]}|\boldsymbol{Y}_{t}-F_{2}(\boldsymbol{U},\boldsymbol{Y})(t)|^{2}\biggr] & =\lim_{n\to\infty}\mathbb{E}\biggl[\sup_{t\in[0,\delta]}|\boldsymbol{Y}_{t}^{(n)}-F_{2}(\boldsymbol{U}^{(n)},\boldsymbol{Y}^{(n)})(t)|^{2}\biggr]\\
& =\lim_{n\to\infty}\mathbb{E}\biggl[\sup_{t\in[0,\delta]}|\boldsymbol{Y}_{t}^{(n)}-\boldsymbol{Y}_{t}^{(n+1)}|^{2}\biggr]=0,
\end{align*}
and hence $\boldsymbol{Y}_{t}=F_{2}(\boldsymbol{U},\boldsymbol{Y})(t)$
for $t\in[0,\delta]$, a.s. We see $\boldsymbol{U}_{t}=F_{1}(\boldsymbol{U},\boldsymbol{Y})(t)$
analogously.
\end{proof}
To establish uniqueness, first we will show the following norm bound analogous to Lemma~\ref{lem: gronwall for X}.
\begin{lem}
\label{lem:stab}For $T>0$, suppose that $\boldsymbol{U}\in C([0,T];\mathbb{R}^{R\times d})$, with $\boldsymbol{U}_0$ having orthonormal rows,
and $\boldsymbol{Y}\in L^{2}(\Omega;C([0,T];\mathbb{R}^{R}))$, with $\boldsymbol{Y}_0$ having linearly independent components, satisfy
$\boldsymbol{U}_{t}=F_{1}(\boldsymbol{U},\boldsymbol{Y})(t)$ and
$\boldsymbol{Y}_{t}=F_{2}(\boldsymbol{U},\boldsymbol{Y})(t)$ for all
$t\in[0,T]$. Then, for all $t\in[0,T]$ we have
\begin{align}
\|\boldsymbol{U}_{t}\|_{\mathrm{F}} & =\sqrt{R};\label{eq:U-stab}\\
\mathbb{E}\bigl[|\boldsymbol{Y}_{t}|^{2}\bigr] & \leq3(\mathbb{E}[|\boldsymbol{Y}_{0}|^{2}]+(1+T)TC_{\mathrm{lgb}})\mathrm{exp}\bigl(3(1+T)TC_{\mathrm{lgb}}\bigr)=:M(T).\label{eq:Y-stab}
\end{align}
\end{lem}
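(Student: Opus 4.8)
The plan is to prove the two estimates separately: \eqref{eq:U-stab} by a Gramian-preservation argument, and \eqref{eq:Y-stab} by a standard second-moment estimate combined with Gronwall's inequality, where the orthonormality of $\boldsymbol{U}_t$ coming from the first step is the ingredient that makes the second work. For Step~1, I would note that, since $\boldsymbol{U}_t=F_1(\boldsymbol{U},\boldsymbol{Y})(t)$ is an indefinite integral, $t\mapsto\boldsymbol{U}_t$ is absolutely continuous with $\dot{\boldsymbol{U}}_t={C}_{\boldsymbol{Y}_t}^{-1}\mathbb{E}[\boldsymbol{Y}_t a(t,\boldsymbol{U}_t^{\top}\boldsymbol{Y}_t)^{\top}](I_{d\times d}-P_{\boldsymbol{U}_t}^{\mathrm{row}})$ for a.e.\ $t$. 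Because $P_{\boldsymbol{U}_t}^{\mathrm{row}}$ is the orthogonal projector onto the row space of $\boldsymbol{U}_t$, i.e.\ onto the column space of $\boldsymbol{U}_t^{\top}$, one has $(I_{d\times d}-P_{\boldsymbol{U}_t}^{\mathrm{row}})\boldsymbol{U}_t^{\top}=0$; hence $\dot{\boldsymbol{U}}_t\boldsymbol{U}_t^{\top}=0$ and, transposing, $\boldsymbol{U}_t\dot{\boldsymbol{U}}_t^{\top}=0$ for a.e.\ $t$. Therefore $\tfrac{d}{dt}(\boldsymbol{U}_t\boldsymbol{U}_t^{\top})=0$ a.e., and absolute continuity gives $\boldsymbol{U}_t\boldsymbol{U}_t^{\top}=\boldsymbol{U}_0\boldsymbol{U}_0^{\top}=I_{R\times R}$ for all $t\in[0,T]$, so $\|\boldsymbol{U}_t\|_{\mathrm{F}}^2=\mathrm{tr}(\boldsymbol{U}_t\boldsymbol{U}_t^{\top})=R$, which is \eqref{eq:U-stab}. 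In particular every singular value of $\boldsymbol{U}_t$ equals $1$, so $\|\boldsymbol{U}_t\|_2=\|\boldsymbol{U}_t^{\top}\|_2=1$ and $|\boldsymbol{U}_t^{\top}\boldsymbol{Y}_t|=|\boldsymbol{Y}_t|$.

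For Step~2, set $X_s:=\boldsymbol{U}_s^{\top}\boldsymbol{Y}_s$, so $|X_s|=|\boldsymbol{Y}_s|$ by Step~1. Starting from $\boldsymbol{Y}_t=F_2(\boldsymbol{U},\boldsymbol{Y})(t)$, I would use $|x+y+z|^2\le 3(|x|^2+|y|^2+|z|^2)$, Jensen's inequality on the drift integral, Itô's isometry on the stochastic integral, and the operator-norm bounds $|\boldsymbol{U}_s v|\le|v|$ and $\|\boldsymbol{U}_s B\|_{\mathrm{F}}\le\|B\|_{\mathrm{F}}$ to obtain
\begin{equation*}
\mathbb{E}[|\boldsymbol{Y}_t|^2]\le 3\mathbb{E}[|\boldsymbol{Y}_0|^2]+3t\!\int_0^t\!\mathbb{E}[|a(s,X_s)|^2]\,ds+3\!\int_0^t\!\mathbb{E}[\|b(s,X_s)\|_{\mathrm{F}}^2]\,ds.
\end{equation*}
Then the linear-growth bound \eqref{eq:lin-growth} together with $|X_s|=|\boldsymbol{Y}_s|$ gives
\begin{equation*}
\mathbb{E}[|\boldsymbol{Y}_t|^2]\le 3\bigl(\mathbb{E}[|\boldsymbol{Y}_0|^2]+(1+T)TC_{\mathrm{lgb}}\bigr)+3(1+T)C_{\mathrm{lgb}}\!\int_0^t\!\mathbb{E}[|\boldsymbol{Y}_s|^2]\,ds,
\end{equation*}
and, since $\boldsymbol{Y}\in L^{2}(\Omega;C([0,T];\mathbb{R}^{R}))$ makes $s\mapsto\mathbb{E}[|\boldsymbol{Y}_s|^2]$ bounded and (by dominated convergence and path continuity) continuous, Gronwall's lemma yields \eqref{eq:Y-stab}.

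Neither step is genuinely hard; the only care is needed in Step~1, namely justifying that the formula for $F_1$ is being legitimately differentiated and, in particular, that $P_{\boldsymbol{U}_s}^{\mathrm{row}}$ and ${C}_{\boldsymbol{Y}_s}^{-1}$ are well defined for every $s\in[0,T]$. I would handle this with a short continuity/bootstrap argument: $\boldsymbol{U}$ is continuous and $\boldsymbol{U}_0$ has orthonormal rows, so on the maximal initial interval on which the rows of $\boldsymbol{U}_s$ remain linearly independent the computation of Step~1 already forces $\boldsymbol{U}_t\boldsymbol{U}_t^{\top}=I_{R\times R}$; hence the rows cannot degenerate and the interval must be all of $[0,T]$. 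Once \eqref{eq:U-stab} is in hand, Step~2 is a routine Gronwall estimate.
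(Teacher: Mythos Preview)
Your proof is correct and follows essentially the same route as the paper: showing $\dot{\boldsymbol{U}}_t\boldsymbol{U}_t^{\top}=0$ from the projector identity $(I-P_{\boldsymbol{U}_t}^{\mathrm{row}})\boldsymbol{U}_t^{\top}=0$ to get $\boldsymbol{U}_t\boldsymbol{U}_t^{\top}\equiv I_{R\times R}$, then combining It\^o's isometry, the linear-growth bound, and Gronwall for the $\boldsymbol{Y}$-estimate. Your extra bootstrap remark about the well-definedness of $P_{\boldsymbol{U}_s}^{\mathrm{row}}$ and $C_{\boldsymbol{Y}_s}^{-1}$ is not needed under the stated hypotheses (the assumption that $F_1,F_2$ are defined on $[0,T]$ already presupposes it), but it does no harm.
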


\begin{proof}
First, from $\boldsymbol{U}_{t}=F_{1}(\boldsymbol{U},\boldsymbol{Y})(t)$,
the function $\boldsymbol{U}$ is absolutely continuous on $[0,T]$, and
thus differentiable almost everywhere. The derivative $\dot{\boldsymbol{U}}_{t}$
satisfies 
\[
\dot{\boldsymbol{U}}_{t}\boldsymbol{U}_{t}^{\top}={C}_{\boldsymbol{Y}_{t}}^{-1}\mathbb{E}[\boldsymbol{Y}_t a(t,\boldsymbol{U}_{t}^{\top}\boldsymbol{Y})^{\top}](I_{d\times d}-P^{\mathrm{row}}_{\boldsymbol{U}_{t}})\boldsymbol{U}_{t}^{\top}=0,
\]
and thus $\frac{\mathrm{d}}{\mathrm{d}t}(\boldsymbol{U}_{t}\boldsymbol{U}_{t}^{\top})=0$
a.e.\ on $[0,T]$. 
Therefore, from the orthonormality of the
initial condition $\boldsymbol{\varphi}\boldsymbol{\varphi}^{\top}=I_{R\times R}$,
for all $t\in[0,T]$ we have
\[
\bigl(\boldsymbol{U}_{t}\boldsymbol{U}_{t}^{\top}\bigr)_{jk}=\epsilon_{jk}+\int_{0}^{t}0\,ds=\epsilon_{jk},
\]
where $\epsilon_{jk}=1$ only if $j=k$, and $0$ otherwise. This
shows the identity \eqref{eq:U-stab}. 

For $\boldsymbol{Y}$, Itô's isometry implies
\begin{align*}
\mathbb{E}\bigl[|\boldsymbol{Y}_{t}|^{2}\bigr] & \leq3\mathbb{E}\biggl[|\boldsymbol{Y}_{0}|^{2}+t\int_{0}^{t}\|\boldsymbol{U}_{s}\|_{2}^{2}|a(s,\boldsymbol{U}_{s}^{\top}\boldsymbol{Y}_{s})|^{2}\,ds\biggr]+3\mathbb{E}\Bigl[\int_{0}^{t}\|\boldsymbol{U}_{s}b(s,\boldsymbol{U}_{s}^{\top}\boldsymbol{Y}_{s})\|_{\mathrm{F}}^{2}\,ds\Bigr]\\
& \leq3\mathbb{E}[|\boldsymbol{Y}_{0}|^{2}]+3(1+T)C_{\mathrm{lgb}}\int_{0}^{t}(1+\mathbb{E}[|\boldsymbol{Y}_{s}|^{2}])\,ds,
\end{align*}
where we used Assumption~\ref{assump:lin-growth} $|a(s,x)|^{2}+\|b(s,x)\|_{\mathrm{F}}^{2}\leq C_{\mathrm{lgb}}(1+|x|^{2})$. Thus, Gronwall's lemma implies \eqref{eq:Y-stab}.
\end{proof}

Now the following uniqueness result follows.
\begin{thm}[Uniqueness of DO solutions]\label{thm: uniqueness of DO}
Let the assumptions of Lemma~\ref{lem:stab} hold.
For $T>0$, suppose that $\boldsymbol{U},\tilde{\boldsymbol{U}}\in C([0,T];\mathbb{R}^{R\times d})$
and $\boldsymbol{Y},\tilde{\boldsymbol{Y}}\in L^2(\Omega;C([0,T];\mathbb{R}^R)$
satisfy $\boldsymbol{U}_{t}=F_{1}(\boldsymbol{U},\boldsymbol{Y})(t)$
and $\tilde{\boldsymbol{U}}_{t}=F_{1}(\tilde{\boldsymbol{U}},\tilde{\boldsymbol{Y}})(t)$;
$\boldsymbol{Y}_{t}=F_{2}(\boldsymbol{U},\boldsymbol{Y})(t)$ and
$\tilde{\boldsymbol{Y}}_{t}=F_{2}(\tilde{\boldsymbol{U}},\tilde{\boldsymbol{Y}})(t)$
for $t\in[0,T]$. Then, we have
\[
\mathbb{P}\Bigl(\sup_{0\leq t\leq T}\|\boldsymbol{U}_{t}-\tilde{\boldsymbol{U}}_{t}\|_{\mathrm{F}}^{2}+\sup_{0\leq t\leq T}|\boldsymbol{Y}_{t}-\tilde{\boldsymbol{Y}}_{t}|>0\Bigr)=0.
\]
\end{thm}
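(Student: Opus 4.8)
The plan is to run a Gronwall estimate for the difference of the two fixed points, with the same structure as the proof of Lemma~\ref{lem: Lipsc Continuity} but carried out over the whole interval $[0,T]$ rather than the short interval $[0,\delta]$ of \eqref{eq:def-delta}. Since $F_{1}$ and $F_{2}$ (see \eqref{eq:F1}--\eqref{eq:F2}) carry the fixed initial data $\boldsymbol{\varphi}$ and $\boldsymbol{\xi}$, the hypotheses give $\boldsymbol{U}_{0}=\tilde{\boldsymbol{U}}_{0}=\boldsymbol{\varphi}$ and $\boldsymbol{Y}_{0}=\tilde{\boldsymbol{Y}}_{0}=\boldsymbol{\xi}$ a.s., so the two solutions agree at $t=0$; it then suffices to show that $\Phi(t):=\sup_{0\le s\le t}\|\boldsymbol{U}_{s}-\tilde{\boldsymbol{U}}_{s}\|_{\mathrm{F}}^{2}+\mathbb{E}\bigl[\sup_{0\le s\le t}|\boldsymbol{Y}_{s}-\tilde{\boldsymbol{Y}}_{s}|^{2}\bigr]$ vanishes on $[0,T]$.

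First I would collect the a priori information from Lemma~\ref{lem:stab}: both $\boldsymbol{U}_{t}$ and $\tilde{\boldsymbol{U}}_{t}$ have orthonormal rows for every $t\in[0,T]$ (so $P^{\mathrm{row}}_{\boldsymbol{U}_{t}}=\boldsymbol{U}_{t}^{\top}\boldsymbol{U}_{t}$, $\|\boldsymbol{U}_{t}\|_{2}=1$, and likewise for $\tilde{\boldsymbol{U}}$) and $\mathbb{E}[|\boldsymbol{Y}_{t}|^{2}]\vee\mathbb{E}[|\tilde{\boldsymbol{Y}}_{t}|^{2}]\le M(T)$ for all $t$; in particular $\Phi$ is finite (since $\boldsymbol{Y},\tilde{\boldsymbol{Y}}\in L^{2}(\Omega;C([0,T];\mathbb{R}^{R}))$ and $\boldsymbol{U},\tilde{\boldsymbol{U}}$ are continuous) and non-decreasing. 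The one ingredient not already present in Lemma~\ref{lem: Lipsc Continuity} is a bound on the inverse Gramians that is uniform in $t$ over all of $[0,T]$: since $\boldsymbol{Y}\in L^{2}(\Omega;C([0,T];\mathbb{R}^{R}))$, dominated convergence shows $t\mapsto C_{\boldsymbol{Y}_{t}}=\mathbb{E}[\boldsymbol{Y}_{t}\boldsymbol{Y}_{t}^{\top}]$ is continuous on $[0,T]$, and likewise for $t\mapsto C_{\tilde{\boldsymbol{Y}}_{t}}$; because the components of $\boldsymbol{Y}_{t}$ and $\tilde{\boldsymbol{Y}}_{t}$ are linearly independent for every $t$ — which is implicit in $F_{1}(\boldsymbol{U},\boldsymbol{Y})(t)$ and $F_{1}(\tilde{\boldsymbol{U}},\tilde{\boldsymbol{Y}})(t)$ being well defined (cf.\ the conditions stated before \eqref{eq:F1}) — the continuous functions $t\mapsto\det C_{\boldsymbol{Y}_{t}}$ and $t\mapsto\det C_{\tilde{\boldsymbol{Y}}_{t}}$ are strictly positive on the compact $[0,T]$, so $\gamma_{T}:=\sup_{t\in[0,T]}(\|C_{\boldsymbol{Y}_{t}}^{-1}\|_{\mathrm{F}}\vee\|C_{\tilde{\boldsymbol{Y}}_{t}}^{-1}\|_{\mathrm{F}})<\infty$ (the same compactness argument used for $\gamma$ in the proof of Theorem~\ref{thm:uniqueness of DLR}). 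On the region $\{\boldsymbol{Z}:\|C_{\boldsymbol{Z}}^{-1}\|_{\mathrm{F}}\le\gamma_{T}\}$ the map $\boldsymbol{Z}\mapsto C_{\boldsymbol{Z}}^{-1}$ is then Lipschitz, with constant depending only on $\gamma_{T}$ and $M(T)$, via $\|C_{\boldsymbol{Z}}^{-1}-C_{\boldsymbol{W}}^{-1}\|_{\mathrm{F}}\le\|C_{\boldsymbol{Z}}^{-1}\|_{\mathrm{F}}\|C_{\boldsymbol{W}}^{-1}\|_{\mathrm{F}}\|C_{\boldsymbol{Z}}-C_{\boldsymbol{W}}\|_{\mathrm{F}}$ and Cauchy--Schwarz (cf.\ \cite[Lemma~3.5]{kazashi2021existence}); the projection term is even easier, as $\|\boldsymbol{U}_{t}^{\top}\boldsymbol{U}_{t}-\tilde{\boldsymbol{U}}_{t}^{\top}\tilde{\boldsymbol{U}}_{t}\|_{\mathrm{F}}\le2\|\boldsymbol{U}_{t}-\tilde{\boldsymbol{U}}_{t}\|_{\mathrm{F}}$ by $\|\boldsymbol{U}_{t}\|_{2}=\|\tilde{\boldsymbol{U}}_{t}\|_{2}=1$ (cf.\ Lemma~\ref{lem: lip orth proj}).

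With these bounds in hand I would subtract the four fixed-point identities and estimate $\Phi(t)$ exactly as in the proof of Lemma~\ref{lem: Lipsc Continuity}, with $\delta$ replaced by $T$ and $2\gamma$ by $\gamma_{T}$: the drift integrals are controlled by Jensen/Cauchy--Schwarz, the stochastic integral in $F_{2}$ by Doob's maximal inequality followed by Itô's isometry, and each product such as $C_{\boldsymbol{Y}_{s}}^{-1}\mathbb{E}[\boldsymbol{Y}_{s}a(s,\boldsymbol{U}_{s}^{\top}\boldsymbol{Y}_{s})^{\top}](I_{d\times d}-P^{\mathrm{row}}_{\boldsymbol{U}_{s}})$ (and $\boldsymbol{U}_{s}a(s,\cdot)$, $\boldsymbol{U}_{s}b(s,\cdot)$) is telescoped so that one factor changes at a time, the resulting terms being bounded by the Lipschitz estimates above, Assumptions~\ref{assump:Lip}--\ref{assump:lin-growth}, the a priori bounds of Lemma~\ref{lem:stab}, and the elementary inequality $|\boldsymbol{U}_{s}^{\top}\boldsymbol{Y}_{s}-\tilde{\boldsymbol{U}}_{s}^{\top}\tilde{\boldsymbol{Y}}_{s}|\le|\boldsymbol{Y}_{s}|\,\|\boldsymbol{U}_{s}-\tilde{\boldsymbol{U}}_{s}\|_{\mathrm{F}}+|\boldsymbol{Y}_{s}-\tilde{\boldsymbol{Y}}_{s}|$. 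This produces a constant $C_{T}=C_{T}(a,b,R,T,M(T),\gamma_{T},d)>0$ with $\Phi(t)\le C_{T}\int_{0}^{t}\Phi(s)\,ds$ on $[0,T]$; since $\Phi$ is finite, non-decreasing and $\Phi(0)=0$, Gronwall's lemma forces $\Phi\equiv0$, i.e.\ $\sup_{0\le t\le T}\|\boldsymbol{U}_{t}-\tilde{\boldsymbol{U}}_{t}\|_{\mathrm{F}}^{2}=0$ and $\mathbb{E}[\sup_{0\le t\le T}|\boldsymbol{Y}_{t}-\tilde{\boldsymbol{Y}}_{t}|^{2}]=0$, whence $\sup_{0\le t\le T}|\boldsymbol{Y}_{t}-\tilde{\boldsymbol{Y}}_{t}|=0$ a.s.\ and the claimed probability is zero.

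The only genuinely non-routine point is obtaining the Gramian bound $\gamma_{T}$ uniformly over $[0,T]$: on $\mathbb{D}_{\mathrm{sto}}$ the estimate $\|C_{\boldsymbol{Y}}^{-1}\|_{\mathrm{F}}\le2\gamma$ was built into the Picard iteration, but here it must be recovered a posteriori for two given solutions, which is exactly where continuity in $t$ of $C_{\boldsymbol{Y}_{t}}$ and compactness of $[0,T]$ enter. Once $\gamma_{T}$ is available the Lipschitz bounds on the vector fields are uniform in $t$, and everything downstream is the routine Gronwall estimate of Lemma~\ref{lem: Lipsc Continuity}.
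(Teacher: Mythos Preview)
Your proposal is correct and follows essentially the same approach as the paper: use Lemma~\ref{lem:stab} for the a priori bounds, extract the uniform-in-$t$ bound $\gamma_{T}$ on the inverse Gramians via continuity and compactness of $[0,T]$, and then rerun the Lipschitz/Gronwall argument of Lemma~\ref{lem: Lipsc Continuity} globally on $[0,T]$. You also correctly single out the one non-routine step---obtaining $\gamma_{T}$ a posteriori for the two given solutions---which is exactly where the paper's proof invokes the continuity of $t\mapsto C_{\boldsymbol{Y}_{t}}^{-1}$.
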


\begin{proof}
By hypothesis, the solutions $\boldsymbol{U},\tilde{\boldsymbol{U}}$
and $\boldsymbol{Y},\tilde{\boldsymbol{Y}}$ satisfy
the stability estimates shown in Lemma~\ref{lem:stab}. 
Moreover, from the continuity of $t\to\mathbb{E}[\boldsymbol{Y}_{t}\boldsymbol{Y}_{t}^{\top}]^{-1}$, we have
\[
\max\Big\{
\max\limits_{s\in[0,T]}\|\mathbb{E}[\boldsymbol{Y}_{s}\boldsymbol{Y}_{s}^{\top}]^{-1}\|_{\mathrm{F}}
,
\max\limits_{s\in[0,T]}\|\mathbb{E}[
\tilde{\boldsymbol{Y}}_{s}
\tilde{\boldsymbol{Y}}_{s}^{\top}]^{-1}\|_{\mathrm{F}}
\Big\}=\tilde{\gamma}<\infty
\]for some $\tilde{\gamma}>0$. Then, noting the norm bounds \eqref{eq:U-stab}
and \eqref{eq:Y-stab}, by an argument similar to the proof of Lemma~\ref{lem: Lipsc Continuity} (see also \cite[Lemma 3.5]{kazashi2021existence} and Lemma \ref{lem: lip orth proj}), 
with a constant $\tilde{C}=\tilde{C}(\tilde{\gamma})>0$ we have

\begin{align}
\sup_{0\leq s\leq t}\|\boldsymbol{U}_{s}-\boldsymbol{U}'_{s}\|_{\mathrm{F}}^{2} + \mathbb{E}\bigl[\sup_{0\leq s\leq t}|\boldsymbol{Y}_{s}-\boldsymbol{Y}'_{s}|^{2}\bigr] & \leq \tilde{C} \int_{0}^{t} \left(\sup_{0\leq s\leq r}\|\boldsymbol{U}_{s}-\boldsymbol{U}'_{s}\|_{\mathrm{F}}^{2} + \mathbb{E}\bigl[\sup_{0\leq s\leq r}|\boldsymbol{Y}_{s}-\boldsymbol{Y}'_{s}|^{2}\bigr] \right)dr
\end{align}
for $t \in [0,T]$.
Thus, applying the Gronwall's lemma yields
\[
\sup_{0\leq s\leq t}\|\boldsymbol{U}_{s}-\boldsymbol{U}'_{s}\|_{\mathrm{F}}^{2}+\mathbb{E}\bigl[\sup_{0\leq s\leq t}|\boldsymbol{Y}_{s}-\boldsymbol{Y}'_{s}|^{2}\bigr]=0.
\]
Now the proof is complete.
\end{proof}

\begin{remark}
The uniqueness of the DO solution can be also deduced from 
the proof of Proposition~\ref{prop: equiv of param}. 
To see this, let $(\boldsymbol{U},\boldsymbol{Y})$
and $(\tilde{\boldsymbol{U}},\tilde{\boldsymbol{Y}})$ be two strong DO solutions with the same initial datum $(\boldsymbol{U}_0,\boldsymbol{Y}_0)$.
Then, following the proof of Proposition~\ref{prop: equiv of param}, we have  $(\boldsymbol{U},\boldsymbol{Y})=(\Theta \boldsymbol{U}_t, \Theta \boldsymbol{Y}_t)$ with $\Theta=I_{R\times R}$.
\end{remark}
We conclude this section by showing the continuity of the solution with respect to the initial datum.
{\begin{lem}
Let $(\boldsymbol{U}_t,\boldsymbol{Y}_t)$ and  $(\tilde{\boldsymbol{U}}_t,\tilde{\boldsymbol{Y}}_t)$ 
be DO solutions
on $[0,T]$ with initial data $(\boldsymbol{U}_0,\boldsymbol{Y}_0)$ and  $(\tilde{\boldsymbol{U}}_0,\tilde{\boldsymbol{Y}}_0)$, respectively. Define $\rho = \max\{\|\boldsymbol{Y}_0\|_{[L^{2}(\Omega)]^{R}},\|\tilde{\boldsymbol{Y}}_0\|_{[L^{2}(\Omega)]^{R}}\}$ and $\gamma = \max\{\|C^{-1}_{\boldsymbol{Y}_0}\|_{\mathrm{F}},\|C^{-1}_{\tilde{\boldsymbol{Y}}_0}\|_{\mathrm{F}}\}$. Then, under Assumptions $1$ and $2$,
\begin{equation*}
\mathbb{E}[\sup_{t \in [0,T]} |\boldsymbol{Y}_t-\tilde{\boldsymbol{Y}}_t|^2] +\sup_{t \in [0,T]} \|\boldsymbol{U}_t-\tilde{\boldsymbol{U}}_t\|^2_{\mathrm{F}} \leq C_{a,b,R,\rho,T,\gamma} \left( \mathbb{E}[ |\boldsymbol{Y}_0-\tilde{\boldsymbol{Y}}_0|^2] + \|\boldsymbol{U}_0-\tilde{\boldsymbol{U}}_0\|^2_{\mathrm{F}} \right)
\end{equation*}
holds. 
Moreover, let 
$X_t$
and  $\tilde{X_t}$ be DO approximations on $[0,T]$ with initial data $X_0$ and $\tilde{X}_0$, respectively, and
let 
\[
\tilde{\gamma} := 
\inf_{t \in [0,T]} \max \{\sigma_R \!\!\;\bigl(\mathbb{E}[{X}_t{X}_t^{\top}]\bigr), \sigma_R \!\!\;\bigl(\mathbb{E}[\tilde{X}_t\tilde{X}_t^{\top}]\bigr) \}.
\] 
Then, there exists a positive constant ${C} >0$ such that
\begin{equation}\label{eq:labeling-just-for-discussions}
\sup_{t \in [0,{T}]} \mathbb{E}[ |{X}_t-\tilde{X}_t|^2] \leq {C}\mathbb{E}[ |X_0-\tilde{X}_0|^2].
\end{equation}
\begin{proof}
Using an argument similar to
the proof of Lemma~\ref{lem: Lipsc Continuity} and the Lipschitz continuity  of the projections as in Lemma \ref{lem: lip orth proj}, we obtain the following relation for the stochastic basis:
\begin{equation*}
\begin{aligned}
	\mathbb{E}[\sup_{t \in [0,T]} |\boldsymbol{Y}_t-\tilde{\boldsymbol{Y}}_t|^2]  \leq &2 \mathbb{E}[ |\boldsymbol{Y}_0-\tilde{\boldsymbol{Y}}_0|^2] \\
 &+ C_{a,b,R,\rho,T}\int_{0}^{T}(\sup_{r\in[0,t]}\|\boldsymbol{U}_{r}-\tilde{\boldsymbol{U}}_{r}\|^{2}_{\mathrm{F}}+\mathbb{E}[\sup_{r\in[0,t]}|\boldsymbol{Y}_{r}-\tilde{\boldsymbol{Y}}_{r}|^{2}])dt.
\end{aligned}
\end{equation*} 
Similarly, noting the Lipschitz continuity of the inverse Gram matrix \cite[Lemma~3.5]{kazashi2021existence}, for the deterministic basis we have
\begin{equation*}
\begin{aligned}
	\|\boldsymbol{U}_t-\tilde{\boldsymbol{U}}_t\|^2_{\mathrm{F}}  & \leq 2\|\boldsymbol{U}_0-\tilde{\boldsymbol{U}}_0\|^2_{\mathrm{F}} + C_{a,b,R,\rho,T,{\gamma}} \int_{0}^{T}(\sup_{r\in[0,t]} \|\boldsymbol{U}_{r}-\tilde{\boldsymbol{U}}_{r}\|^{2}_{\mathrm{F}}+\mathbb{E}[\sup_{r\in[0,t]}|\boldsymbol{Y}_{r}-\tilde{\boldsymbol{Y}}_{r}|^{2}])dt.
\end{aligned}
\end{equation*}
Then, the Gronwall's lemma yields the first part of the statement. 
Finally, to prove {\eqref{eq:labeling-just-for-discussions}}, we proceed as above and as done in Theorem~\ref{thm:uniqueness of DLR}.
Therefore, for all $t \in [0,{T}]$ there exists a positive constant $C$ such that it holds
\begin{equation*}	
\begin{aligned}    \mathbb{E}[|{X}_{t}-\tilde{X}_{t}|^2] & \leq 2  \mathbb{E}[|{X}_{0}-\tilde{X}_{0}|^2] + 2\overline{C}_{a,b,R,T,\tilde{\gamma}} \int_{0}^{T} \mathbb{E}[|{X}_{s}-\tilde{X}_{s}|^2] ds, \\
	& \leq C \mathbb{E}[|{X}_{0}-\tilde{X}_{0}|^2],
\end{aligned}
\end{equation*}
where in the last line we use the Gronwall's lemma.  
\end{proof}
\end{lem}}
\section{Maximality}\label{sec: maximality}
In the previous section, we established the existence and uniqueness of strong DO solutions locally in time on an interval $[0,T]$.
In this section, we investigate how much such an interval can be extended. 

We give a characterisation of the \textrm{maximal} interval of existence of the strong DO solution in terms of $\|{C}_{\boldsymbol{Y}_{t}}^{-1}\|_{\mathrm{F}}$.   
It turns out that the DO solution can be extended until  $\|{C}_{\boldsymbol{Y}_{t}}^{-1}\|_{\mathrm{F}}$ explodes. 
If $\|{C}_{\boldsymbol{Y}_{t}}^{-1}\|_{\mathrm{F}}$ stays bounded for all $t>0$, then the DO solution exists globally.    
If $\|{C}_{\boldsymbol{Y}_{t}}^{-1}\|_{\mathrm{F}}$ explodes at a finite \textit{explosion time} $T_e$,  the couple $(\boldsymbol{U},\boldsymbol{Y})$ inevitably ceases to exist at $T_e$; we will show nevertheless that the DO approximation $X = \boldsymbol{U}^{\top}\boldsymbol{Y}$ can be extended beyond $T_e$.

\subsection{Explosion time of the DO solution} \label{sec:blow-up}

Theorem \ref{thm:local-existence} guarantees the (unique) existence of a strong DO solution, albeit up to possibly a short time $T$. In this section, we show that the solution can be extended until ${C}_{\boldsymbol{Y}_{t}}^{-1}$
becomes singular. 

Analogous maximality results have been considered in the DLRA literature for deterministic and random PDEs  \cite{koch2007regularity,kazashi2021existence}. 
For the SDE case of this paper, we need to proceed with caution. 
Indeed, adhering to the definition of SDEs, we need the DO solution to be path-wise continuous a.s. 
As such, we consider extension with countable number of operations, which we describe in the following.

Let $[0,T]$ be the interval on which the DO solution $(\boldsymbol{U},\boldsymbol{Y})$ exists; such existence is guaranteed by Theorem \ref{thm:local-existence}. 
Let us choose $n \in \mathbb{N}$ such that the following two bounds are satisfied
\begin{equation}\label{eq:rhogamma-ub}
\begin{aligned}
\mathbb{E}[\|\boldsymbol{Y}_T\|^2] & \leq \mathbb{E}[\|\boldsymbol{Y}_0\|^2] + n := \rho^2_n ;\\
\|{C}_{\boldsymbol{Y}_{T}}^{-1}\|^2_{\mathrm{F}} & \leq \|{C}_{\boldsymbol{Y}_{0}}^{-1}\|^2_{\mathrm{F}} + n := \gamma^2_n .
\end{aligned}
\end{equation}
We will show that the solution can be extended at least until $\mathbb{E}[\|\boldsymbol{Y}_t\|^2]$ or  $\|{C}_{\boldsymbol{Y}_{t}}^{-1}\|^2_{\mathrm{F}}$ hits the bound  $\rho^2_n$ or $\gamma^2_n$. 
Define $\delta(n)$ by
{
\begin{equation}\label{delta_n}
\delta(n):=\min\biggl\{1 \,,\,
\frac{\min\{1,
\eta^2_n
\}}{36RC_{\mathrm{lgb}}(1+3R(3\rho_n^2 +1))} \,,\,
\frac{\min\{
\eta^2_n
,R\}}{8\gamma_n^2(3\rho_n^2+1) C_{\mathrm{lgb}}(1+3R(3\rho_n^2 +1))}\biggr\},
\end{equation}}where  $\eta_n:=\min\{\eta(\rho_n,\gamma_n),\eta(\sqrt{R},\sqrt{R})\}$, with $\eta(\cdot,\cdot)$ defined in \eqref{eq:def-eta}.
Then from the proofs of Lemmata~\ref{lem:sq-in-D} and~\ref{lem:sq-conv}, and Theorem~\ref{thm:local-existence},
we can construct a convergent Picard-iteration for the interval $[T,T+\delta(n)]$, which yields the extension of the DO solution up to $[0,T+\delta(n)]$. 
Set $T:=T+\delta(n)$. 
By construction $\delta(n)$ is independent of $T$, and with the same $\delta(n)$ we can repeat the same argument as long as the bound \eqref{eq:rhogamma-ub} is satisfied. Hence, the solution can be extended until either  $\mathbb{E}[\|\boldsymbol{Y}_{T+\delta(n)}\|^2] \leq \rho^2_n$ or  $\|{C}^{-1}_{\boldsymbol{Y}_{T+\delta(n)}}\|^2_{\mathrm{F}} \leq \gamma^2_n$ gets violated.

From the argument above, the following quantities are well defined for any $n \in \mathbb{N}$:
\begin{align*}
\tau_{n}^{1}&:=\inf\{t>0\mid\|{C}_{\boldsymbol{Y}_{t}}^{-1}\|_{\mathrm{F}}=\|{C}_{\boldsymbol{Y}_{0}}^{-1}\|_{\mathrm{F}}+n\},\\
\tau_{n}^{2}&:=\inf\{t>0\mid\|\boldsymbol{Y}_{t}\|_{[L^{2}(\Omega)]^{R}}=\|\boldsymbol{Y}_{0}\|_{[L^{2}(\Omega)]^{R}}+n\}
\end{align*}
with a convention $\inf\varnothing=\infty$. With these, we define  the sequence
\begin{align}\label{eq: tau_n}
\tau_{n}:=\tau_{n}(\boldsymbol{U}_{0},\boldsymbol{Y}_{0}):=\min\{\tau_{n}^{1},\tau_{n}^{2}\},\quad n\in\mathbb{N},
\end{align}
which is a sequence of stopping times. 
By continuity of the paths,  $(\tau_{n})_{n\in\mathbb{N}}$ 
is a non-decreasing sequence, which allows us to define  $T_e:=\lim\limits_{n\to\infty}\tau_{n}$. 

Now we will show that if $T_e<\infty$, then the norm of the inverse of the Gram matrix must blow up.

\begin{prop}\label{prop: max T_e}
We have either $T_e=\lim\limits_{n\to\infty}\tau_{n}=\infty$ or 
$T_e<\infty$, where $\tau_n$ is defined in~\eqref{eq: tau_n}. If $T_e<\infty$, we necessarily have $\lim\limits_{t\uparrow T_e}\|{C}_{\boldsymbol{Y}_{t}}^{-1}\|_{\mathrm{F}}=\infty$.
\end{prop}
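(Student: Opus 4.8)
The plan is the following. The dichotomy is immediate: $(\tau_n)_n$ is non-decreasing, so $T_e=\lim_n\tau_n$ is either $+\infty$ or a finite number, and there is nothing to prove in the first case. Assume from now on that $T_e<\infty$; I must show $\|C_{\boldsymbol{Y}_t}^{-1}\|_{\mathrm{F}}\to\infty$ as $t\uparrow T_e$. Two preliminary observations will be used. First, by the extension argument described above, the DO solution $(\boldsymbol{U},\boldsymbol{Y})$ is defined on $[0,T_e)$. Second, applying Lemma~\ref{lem:stab} on each interval $[0,T]$ with $T<T_e$ and using that $T\mapsto M(T)$ is non-decreasing, one gets $\sup_{t\in[0,T_e)}\mathbb{E}[|\boldsymbol{Y}_t|^2]\le M(T_e)=:K<\infty$, together with $\|\boldsymbol{U}_t\|_{\mathrm{F}}=\sqrt{R}$ and $\boldsymbol{U}_t\boldsymbol{U}_t^\top=I$ for all such $t$; thus the $\boldsymbol{Y}$-norm stays bounded near $T_e$ and, should the solution cease to exist at $T_e$, it can only be due to $C_{\boldsymbol{Y}_t}^{-1}$.

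I would then argue by contradiction. Suppose $\|C_{\boldsymbol{Y}_t}^{-1}\|_{\mathrm{F}}$ does not diverge as $t\uparrow T_e$; then there exist $L\in(0,\infty)$ and a sequence $t_k\uparrow T_e$ with $\|C_{\boldsymbol{Y}_{t_k}}^{-1}\|_{\mathrm{F}}\le L$ for every $k$. For each $k$, the pair $(\boldsymbol{U}_{t_k},\boldsymbol{Y}_{t_k})$ is an admissible initial datum for the DO equations: $\boldsymbol{U}_{t_k}$ has orthonormal rows by Lemma~\ref{lem:stab}, and $\boldsymbol{Y}_{t_k}$ has linearly independent components and is $\mathcal{F}_{t_k}$-measurable, with $\|\boldsymbol{Y}_{t_k}\|_{[L^2(\Omega)]^R}^2\le K$ and $\|C_{\boldsymbol{Y}_{t_k}}^{-1}\|_{\mathrm{F}}\le L$. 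Re-running the Picard construction of Theorem~\ref{thm:local-existence} from $(\boldsymbol{U}_{t_k},\boldsymbol{Y}_{t_k})$ (with the time-shifted Brownian motion $s\mapsto W_{t_k+s}-W_{t_k}$) produces a DO solution on $[t_k,t_k+\delta_*]$ where $\delta_*>0$ can be chosen independently of $k$: indeed $\eta(\cdot,\cdot)$ is non-increasing in both arguments (Proposition~\ref{prop: A inverse}), the right-hand side of \eqref{eq:def-delta} is non-increasing in $\rho$ and in $\gamma$, so the uniform bounds $\rho^2\le K$ and $\gamma\le L$ give a uniform lower bound $\delta_*=\delta_*(K,L,R,d,C_{\mathrm{lgb}})>0$. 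Crucially, the Picard iterates, hence the limit, remain in the ball $B_\eta(\boldsymbol{Y}_{t_k})$, and every $\boldsymbol{w}$ in that ball satisfies $\|C_{\boldsymbol{w}}^{-1}\|_{\mathrm{F}}\le 2\gamma\le 2L$; therefore $\|C_{\boldsymbol{Y}_t}^{-1}\|_{\mathrm{F}}\le 2L$ for \emph{all} $t\in[t_k,t_k+\delta_*]$, not merely at $t=t_k$.

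To conclude, fix $k$ with $t_k>T_e-\delta_*$. By uniqueness of DO solutions (Theorem~\ref{thm: uniqueness of DO}) this new solution coincides with $(\boldsymbol{U},\boldsymbol{Y})$ on $[t_k,T_e)$ and continues it to $[0,T_e+\varepsilon]$ with $\varepsilon:=t_k+\delta_*-T_e>0$. On the compact interval $[0,T_e+\varepsilon]$ the continuous maps $t\mapsto\|C_{\boldsymbol{Y}_t}^{-1}\|_{\mathrm{F}}$ and $t\mapsto\|\boldsymbol{Y}_t\|_{[L^2(\Omega)]^R}$ are bounded, say by $\Gamma<\infty$. Choosing $n\in\mathbb{N}$ with $\|C_{\boldsymbol{Y}_0}^{-1}\|_{\mathrm{F}}+n>\Gamma$ and $\|\boldsymbol{Y}_0\|_{[L^2(\Omega)]^R}+n>\Gamma$, neither the level defining $\tau_n^1$ nor the one defining $\tau_n^2$ is attained on $[0,T_e+\varepsilon]$, whence $\tau_n\ge T_e+\varepsilon>T_e=\sup_m\tau_m\ge\tau_n$, a contradiction. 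Therefore $\lim_{t\uparrow T_e}\|C_{\boldsymbol{Y}_t}^{-1}\|_{\mathrm{F}}=\infty$.

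I expect the main obstacle to be the second paragraph: one must verify that the restart of the fixed-point argument has a step size $\delta_*$ that does not shrink as $k\to\infty$ (this reduces to the monotonicity of $\eta$ and of \eqref{eq:def-delta}), and — more importantly — that the bound $\|C_{\boldsymbol{Y}_t}^{-1}\|_{\mathrm{F}}\le 2L$ propagates over the entire restart interval through the ball $B_\eta(\boldsymbol{\xi})$ that is built into the iteration. It is precisely this self-improving control that prevents $\|C_{\boldsymbol{Y}_t}^{-1}\|_{\mathrm{F}}$ from oscillating back to finite values near $T_e$: a finite $\liminf$ would already force extendability past $T_e$, contradicting the definition of $T_e$ as the supremum of the $\tau_n$.
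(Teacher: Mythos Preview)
Your proof is correct and takes a genuinely different route from the paper's. You run the classical \emph{maximal-solution} argument: assuming $\liminf_{t\uparrow T_e}\|C_{\boldsymbol{Y}_t}^{-1}\|_{\mathrm{F}}<\infty$, you restart the Picard iteration from a late time $t_k$ with uniformly controlled data, obtain a uniform step size $\delta_*$, and push the solution past $T_e$, contradicting the definition of $T_e$ as $\sup_n\tau_n$. The paper instead never re-invokes local existence. It first observes that, since $\tau_n\uparrow T_e<\infty$ and (by continuity) $\|C_{\boldsymbol{Y}_{\tau_n}}^{-1}\|_{\mathrm{F}}=\|C_{\boldsymbol{Y}_0}^{-1}\|_{\mathrm{F}}+n\to\infty$, one already has $\limsup_{t\uparrow T_e}\|C_{\boldsymbol{Y}_t}^{-1}\|_{\mathrm{F}}=\infty$; it then upgrades $\limsup$ to $\lim$ by a separate oscillation argument using the Lipschitz continuity of $\boldsymbol{Y}\mapsto C_{\boldsymbol{Y}}^{-1}$ (from \cite[Lemma~3.5]{kazashi2021existence}) together with the $L^2$-continuity of $t\mapsto\boldsymbol{Y}_t$.

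Your approach is conceptually cleaner in that it yields the full limit in one stroke (a finite $\liminf$ already produces the contradiction), and it is the standard template from ODE blow-up theory. The trade-off is that you must verify carefully---as you note in your last paragraph---that the Picard step size is uniform along the subsequence and that the inverse-Gram bound propagates over the whole restart interval via the ball $B_\eta(\boldsymbol{Y}_{t_k})\subset\mathbb{D}_{\mathrm{sto}}$; both facts are available from Proposition~\ref{prop: A inverse} and Lemma~\ref{lem:sq-conv}, so this is fine. The paper's route is lighter on the existence machinery but imports an external Lipschitz lemma and needs the extra step to rule out oscillations. One minor wording point: in your final contradiction, for large $n$ the level is never attained on $[0,T_e)$, so with the convention $\inf\varnothing=\infty$ one gets $\tau_n=\infty$, hence $T_e=\infty$ directly; writing $\tau_n\ge T_e+\varepsilon$ presupposes you have redefined $\tau_n$ for the extended solution, which is harmless but worth making explicit.
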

\begin{proof}
From the norm bound \eqref{eq:Y-stab},
for sufficiently large $n$ we must have

\noindent $\|\boldsymbol{Y}_{t}\|_{[L^{2}(\Omega)]^{R}}<\|\boldsymbol{Y}_{0}\|_{[L^{2}(\Omega)]^{R}}+n$
for any $t\in(0,T_e)$, and thus without loss of generality we  assume
\[
\tau_{n}=\tau_{n}^{1}=\inf\{t>0\mid\|{C}_{\boldsymbol{Y}_{t}}^{-1}\|_{\mathrm{F}}=\|{C}_{\boldsymbol{Y}_{0}}^{-1}\|_{\mathrm{F}}+n\},\quad n\in\mathbb{N}.
\]

We will first show that $T_e<\infty$ implies $\limsup\limits_{t\uparrow T_e}\|{C}_{\boldsymbol{Y}_{t}}^{-1}\|_{\mathrm{F}}=\infty$.
We argue by contradiction, and assume $T_e<\infty$ and $\limsup\limits_{t\uparrow T_e}\|{C}_{\boldsymbol{Y}_{t}}^{-1}\|_{\mathrm{F}}<\infty$.
Then, we have $\sup\limits_{t\in[T_e-\delta,T_e)}\|{C}_{\boldsymbol{Y}_{t}}^{-1}\|_{\mathrm{F}}<K<\infty$
for some $K>0$ and $\delta>0$. But then since $T_e<\infty$, the continuity
of $t\mapsto\boldsymbol{Y}_{t}$ implies $\|{C}_{\boldsymbol{Y}_{\tau_{n}}}^{-1}\|_{\mathrm{F}}=\|{C}_{\boldsymbol{Y}_{0}}^{-1}\|_{\mathrm{F}}+n$
and thus for any $n$ sufficiently large we have 
\[
\|{C}_{\boldsymbol{Y}_{\tau_{n}}}^{-1}\|_{\mathrm{F}}=\|{C}_{\boldsymbol{Y}_{0}}^{-1}\|_{\mathrm{F}}+n<K,
\]
which is absurd. Hence, $T_e<\infty$ implies $\limsup\limits_{t\uparrow T_e}\|{C}_{\boldsymbol{Y}_{t}}^{-1}\|_{\mathrm{F}}=\infty$.

To conclude the proof we will show 
\[
\lim_{t\uparrow T_e}\|{C}_{\boldsymbol{Y}_{t}}^{-1}\|_{\mathrm{F}}=\infty.
\]
If this is false, then there exist a sequence $t_{m}\uparrow T_e$ and
$\gamma>0$ such that $\|{C}_{\boldsymbol{Y}_{t_{m}}}^{-1}\|_{\mathrm{F}}\leq\gamma$
for all $m\ge0$. But since $\limsup\limits_{t\uparrow T_e}\|{C}_{\boldsymbol{Y}_{t}}^{-1}\|_{\mathrm{F}}=\infty$
there is a sequence $s_{k}\uparrow e$ such that $\|{C}_{\boldsymbol{Y}_{s_{k}}}^{-1}\|_{\mathrm{F}}\geq\gamma+1$
for all $k\ge0$. We take a sub-sequence $(s_{k_{m}})_{m}$ so that
$t_{m}<s_{k_{m}}$ for all $m$. From the continuity of $t\mapsto\|{C}_{\boldsymbol{Y}_{t}}^{-1}\|_{\mathrm{F}}$
on $[t_{m},s_{k_{m}}]$, there exists $h_{m}\in[0,s_{k_{m}}-t_{m}]$
such that $\|{C}_{\boldsymbol{Y}_{t_{m}+h_{m}}}^{-1}\|_{\mathrm{F}}=\gamma+1$.
Now, from \eqref{eq:Y-stab} and \cite[Lemma 3.5]{kazashi2021existence}
we have for any $m\geq0$
\[
1\leq\|{C}_{\boldsymbol{Y}_{t_{m}+h_{m}}}^{-1}\|_{\mathrm{F}}-\|{C}_{\boldsymbol{Y}_{t_{m}}}^{-1}\|_{\mathrm{F}}\leq C_{T_e,R,\gamma}\|\boldsymbol{Y}_{t_{m}+h_{m}}-\boldsymbol{Y}_{t_{m}}\|_{[L^{2}(\Omega)]^{R}},
\]
which is {impossible} since $h_{m}\to0$ as $m\to\infty$ and $\boldsymbol{Y}$
is continuous on $[0,T_e)$. Hence, the proof is complete.
\end{proof} 
\subsection{Extension up to the explosion time}
Even when the explosion time $T_e$ for the DO solution is finite, and thus $\boldsymbol{U}$ and $\boldsymbol{Y}$ cease to exist at $T_e$, we will show that the product $X=\boldsymbol{U}^\top\boldsymbol{Y}$ nevertheless admits a continuous extension up to $[0,T_e]$, and beyond $T_e$, under suitable assumptions.

For any $t'<t<T_e$, from \eqref{eq:eq-manifold DLR} we have 
\begin{equation}\label{eq:DLR-diff}
\begin{aligned}
X_{t}-X_{t'}
=\int_{t'}^{t}\bigl(\mathscr{P}_{\mathcal{U}({X}_s)}+\mathscr{P}_{\mathcal{Y}({X}_s)}-\mathscr{P}_{\mathcal{U}({X}_s)}\mathscr{P}_{\mathcal{Y}({X}_s)}\bigr)a(s,X_{s})\,ds
+\int_{t'}^{t}\mathscr{P}_{\mathcal{U}({X}_s)}b(s,X_{s})\,dW_{s},
\end{aligned}
\end{equation}
and, hence, the Itô's isometry implies
\[
\mathbb{E}[|X_{t}-X_{t'}|^{2}]\leq2T_e\int_{t'}^{t}|a(s,X_{s})|^{2}\,ds+2\int_{t'}^{t}\|b(s,X_{s})\|_{\mathrm{F}}^{2}\,ds.
\]
From Assumption~\ref{assump:lin-growth}, the orthogonality {of the rows}
of $\boldsymbol{U}_{t}$ and the norm bound \eqref{eq:Y-stab} of $\boldsymbol{Y}_{t}$
it follows that
\[
\mathbb{E}[|X_{t}-X_{t'}|^{2}]\leq C(t-t')
\]
with $C:=4\max\{1,T_e\}C_{\mathrm{lgb}}\bigl(1+M(T_e)\bigr).$
Therefore, $(X_{t})_{0\leq t< T_e}$ admits a
unique extension $X_{T_e}:=\lim_{t\uparrow T_e}X_{t}\in L^{2}(\Omega;\mathbb{R}^{d})$.

Thus obtained $(X_{t})_{0\leq t\leq  T_e}$ is continuous from $[0,T_e]$ to $L^{2}(\Omega;\mathbb{R}^{d})$, but not necessarily path-wise a.s.~continuous on $[0,T_e]$.
It turns out that for initial data with suitable integrability, the DO approximation $X_{t}$  actually admits a.s.\ Hölder continuous paths. Namely, we assume the following $\mathbb{P}$-integrability condition.
\begin{assumption}\label{assump:2k moment X0} The initial condition $\boldsymbol{X}_0$ satisfies
\begin{equation} \label{2k moment in 0}
\mathbb{E}[|\boldsymbol{X}_{0}|^{2k}] < +\infty,\qquad \text{for some}\ \  k \in \mathbb{N}.
\end{equation}
\end{assumption}
Notice that this condition is equivalent to
\begin{equation} \label{2k moment Y0}
\mathbb{E}[|\boldsymbol{Y}_{0}|^{2k}]<\infty,\qquad \text{for some}\ \  k \in \mathbb{N},
\end{equation}
where $\boldsymbol{Y}_{0}\in [L^2(\Omega)]^R$ is arbitrary such that $X_0=\boldsymbol{U}_0^\top\boldsymbol{Y}_{0}$ for $\boldsymbol{U}_0\in\mathbb{R}^{R\times d}$ orthonormal. 
Indeed, for any DO initial condition 
$\boldsymbol{U}_0^\top \boldsymbol{Y}_0=X_0$, {the orthogonality of the rows} of $\boldsymbol{U}_0$ implies
\[\mathbb{E}[|\boldsymbol{Y}_{0}|^{2k}] \leq \mathbb{E}[\left(\|\boldsymbol{U}^{\top}_{0}\|^2_2 |\boldsymbol{U}^{\top}_{0}\boldsymbol{Y}_{0}|^{2}\right)^{k}] \leq \mathbb{E}[|X_{0}|^{2k}] \leq \mathbb{E}[\left(\|\boldsymbol{U}^{\top}_{0}\|^2_2 |\boldsymbol{Y}_{0}|^{2}\right)^{k}] = \mathbb{E}[|\boldsymbol{Y}_{0}|^{2k}] < +\infty,\]
provided that the $2k$-th moment of either $\boldsymbol{Y}_0$ or $X_0$ (hence both) exists.
Thus,
$
\mathbb{E}[|X_{0}|^{2k}] = \mathbb{E}[|\boldsymbol{Y}_{0}|^{2k}] 
$. 
Similarly, for any other DO initial condition 
$\tilde{\boldsymbol{U}}_0^\top \tilde{\boldsymbol{Y}}_0=X_0$ we have \[\mathbb{E}[|\tilde{\boldsymbol{Y}}_{0}|^{2k}]=\mathbb{E}[|X_{0}|^{2k}] = \mathbb{E}[|\boldsymbol{Y}_{0}|^{2k}].\]
Hence, Assumption~\ref{assump:2k moment X0} is equivalent to \eqref{2k moment Y0}, with an arbitrarily fixed stochastic basis.

Analogously, for all $t>0$ we have
\begin{equation}\label{eq:2k-mom}
\mathbb{E}[|X_{t}|^{2k}] = \mathbb{E}[|\boldsymbol{Y}_{t}|^{2k}] < +\infty.
\end{equation}

The DO solution preserves the $\mathbb{P}$-integrability of the initial datum.
\begin{lem}[Even order moments of the solution]\label{lem: 2k moments}
Let $a$ and $b$ satisfy Assumptions~\ref{assump:Lip} and~\ref{assump:lin-growth}.
Suppose that the DO solution for \eqref{eq:true-eq}  exists on $[0,T]$. 
Suppose that Assumption~\ref{assump:2k moment X0} is fulfilled for some $k\in\mathbb{N}$. 
Then, 
for any $t\in[0,T]$ we have
\begin{equation*}
\mathbb{E}[|\boldsymbol{Y}_{t}|^{2k}]  \leq \bigl(\mathbb{E}[|\boldsymbol{Y}_{0}|^{2k}] + K_1(T)\bigr)K_2(T),
\end{equation*}
with $K_1(T):= \Bigl(3k^2 \frac{C_{\mathrm{lgb}}\,T}{(1+{1}/{C_{\mathrm{lgb}}})^{k-1}} \Bigr)$ and $K_2(T) := \exp\{6k^2C_{\mathrm{lgb}}(1+{1}/{C_{\mathrm{lgb}}})T\}$.
\end{lem}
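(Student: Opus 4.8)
The plan is to run an Itô--Gronwall estimate directly on $|\boldsymbol{Y}_t|^{2k}$, with a localisation so that the stochastic integral becomes a genuine martingale. Recall that a strong DO solution satisfies \eqref{eq:DLR-eq-Y}, i.e.\ $d\boldsymbol{Y}_t=\boldsymbol{U}_ta(t,X_t)\,dt+\boldsymbol{U}_tb(t,X_t)\,dW_t$ with $X_t=\boldsymbol{U}_t^\top\boldsymbol{Y}_t$, and that $\boldsymbol{U}_t$ has orthonormal rows, so $\|\boldsymbol{U}_t\|_2=1$; hence $|X_t|=|\boldsymbol{Y}_t|$, the drift of $\boldsymbol{Y}$ has modulus $\le|a(t,X_t)|$, the diffusion has Frobenius norm $\le\|b(t,X_t)\|_{\mathrm{F}}$, and by Assumption~\ref{assump:lin-growth} both are bounded by $C_{\mathrm{lgb}}(1+|\boldsymbol{Y}_t|^2)$. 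First I would apply It\^o's formula with $f(y)=|y|^{2k}$, using $\nabla f(y)=2k|y|^{2k-2}y$ and $\nabla^2 f(y)=2k|y|^{2k-2}I+2k(2k-2)|y|^{2k-4}yy^\top$, together with the elementary bounds $|y^\top\beta|^2\le|y|^2\|\beta\|_{\mathrm{F}}^2$ and $|y|^{2k-1}|a|\le\tfrac12|y|^{2k}+\tfrac12|y|^{2k-2}|a|^2$, to arrive at
\begin{equation*}
d|\boldsymbol{Y}_t|^{2k}\le\Bigl(k|\boldsymbol{Y}_t|^{2k}+k(2k-1)|\boldsymbol{Y}_t|^{2k-2}\bigl(|a(t,X_t)|^2+\|b(t,X_t)\|_{\mathrm{F}}^2\bigr)\Bigr)\,dt+d(\text{martingale}).
\end{equation*}

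Next I would localise with $\tau_N:=\inf\{t\in[0,T]\mid|\boldsymbol{Y}_t|\ge N\}$, which increases to $+\infty$ almost surely since the paths of $\boldsymbol{Y}$ are a.s.\ continuous on $[0,T]$; stopping at $\tau_N$ turns the martingale term into a true (bounded) martingale. Taking expectations, using $|X_t|=|\boldsymbol{Y}_t|$ and the linear-growth bound, and writing $u_N(t):=\mathbb{E}[|\boldsymbol{Y}_{t\wedge\tau_N}|^{2k}]$, I would be left with a term of the form $\mathbb{E}[|\boldsymbol{Y}_s|^{2k-2}]$; this is the delicate point, and I would absorb it with the sharp Young inequality $x^{k-1}\le c\,x^{k}+\tfrac1k\bigl(\tfrac{k-1}{kc}\bigr)^{k-1}$ (valid for all $x\ge0$, $c>0$), applied with $x=|\boldsymbol{Y}_s|^2$ and with the specific choice $c=\tfrac{k-1}{k}\bigl(1+1/C_{\mathrm{lgb}}\bigr)$, so that $\bigl(\tfrac{k-1}{kc}\bigr)^{k-1}=\bigl(1+1/C_{\mathrm{lgb}}\bigr)^{-(k-1)}$ (for $k=1$ the term is simply $1$, so no Young step is needed). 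Using $\mathbf{1}_{\{s<\tau_N\}}|\boldsymbol{Y}_s|^{2k}\le|\boldsymbol{Y}_{s\wedge\tau_N}|^{2k}$ and the crude bounds $2k-1\le 3k^2$, $1+c\le 2+1/C_{\mathrm{lgb}}$, this yields
\begin{equation*}
u_N(t)\le \mathbb{E}[|\boldsymbol{Y}_0|^{2k}]+\int_0^t\bigl(A+B\,u_N(s)\bigr)\,ds,\qquad AT\le K_1(T),\quad BT\le\log K_2(T),
\end{equation*}
with $A=(2k-1)C_{\mathrm{lgb}}\bigl(1+1/C_{\mathrm{lgb}}\bigr)^{-(k-1)}$ and $B=k+k(2k-1)C_{\mathrm{lgb}}(1+c)$.

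Finally, since $u_N$ is bounded (by $N^{2k}$), Gronwall's lemma gives $u_N(t)\le\bigl(\mathbb{E}[|\boldsymbol{Y}_0|^{2k}]+At\bigr)e^{Bt}\le\bigl(\mathbb{E}[|\boldsymbol{Y}_0|^{2k}]+K_1(T)\bigr)K_2(T)$ for all $t\in[0,T]$, uniformly in $N$; here $\mathbb{E}[|\boldsymbol{Y}_0|^{2k}]=\mathbb{E}[|X_0|^{2k}]<\infty$ by Assumption~\ref{assump:2k moment X0} and \eqref{eq:2k-mom}. Letting $N\to\infty$ and invoking Fatou's lemma (legitimate because $t\wedge\tau_N\to t$ a.s.\ and $\boldsymbol{Y}$ has a.s.\ continuous paths) gives $\mathbb{E}[|\boldsymbol{Y}_t|^{2k}]\le\liminf_N u_N(t)\le\bigl(\mathbb{E}[|\boldsymbol{Y}_0|^{2k}]+K_1(T)\bigr)K_2(T)$, which is the assertion. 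The main obstacle I anticipate is the constant bookkeeping: the Young parameter $c$ has to be picked precisely so that the residual constant carries the factor $\bigl(1+1/C_{\mathrm{lgb}}\bigr)^{-(k-1)}$, and then $k(2k-1)$, $1+c$ and similar quantities must be over-estimated just generously enough to fit inside $3k^2$ and $6k^2\bigl(1+1/C_{\mathrm{lgb}}\bigr)$, so that $AT\le K_1(T)$ and $e^{BT}\le K_2(T)$. The localisation/Fatou step is routine but has to be spelled out, since a priori one only knows $\mathbb{E}[\sup_{[0,T]}|\boldsymbol{Y}_t|^2]<\infty$.
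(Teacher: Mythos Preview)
Your proposal is correct and follows essentially the same route as the paper: apply It\^o's formula to $|\boldsymbol{Y}_t|^{2k}$, use the orthonormality of $\boldsymbol{U}_t$ together with the linear-growth bound, absorb the lower-order term $|\boldsymbol{Y}_t|^{2k-2}$ by an elementary inequality, and close with Gronwall. The only differences are cosmetic --- the paper dispatches the $|\boldsymbol{Y}|^{2k-2}$ term via the rescaling trick $(1+r^2)r^{2k-2}\le 1+2r^{2k}$ instead of your Young inequality, and it takes expectations of the It\^o expansion directly without localising; your stopping-time/Fatou step is in fact more careful, since the paper does not justify why the stochastic integral has zero mean before the $2k$-th moment bound is established.
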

\begin{proof}
From the It\^o formula,  $|\boldsymbol{Y}_{t}|^{2k}$ satisfies the following SDE (see also \cite[Theorem~4.5.4]{kloeden1992stochastic}):
\begin{equation*}
\begin{aligned}
	|\boldsymbol{Y}_{t}|^{2k} & = |\boldsymbol{Y}_{0}|^{2k} +\int_{0}^{t} 2k |\boldsymbol{Y}_{s}|^{2k-2} \left(\boldsymbol{U}_{s} a(s,\boldsymbol{U}_{s}^{\top}\boldsymbol{Y}_{s})\right)^{\top}\boldsymbol{Y}_{s} + \\
	& + k|\boldsymbol{Y}_{s}|^{2k-2} \mathrm{Tr}\left(\left(\boldsymbol{U}_{s} b(s,\boldsymbol{U}_{s}^{\top}\boldsymbol{Y}_{s})\right)\left(\boldsymbol{U}_{s} b(s,\boldsymbol{U}_{s}^{\top}\boldsymbol{Y}_{s})\right)^{\top}\right) \\ 
	& + k(2k-1)|\boldsymbol{Y}_{s}|^{2k-4} |\boldsymbol{Y}^{\top}_{s}\boldsymbol{U}_{s} b(s,\boldsymbol{U}_{s}^{\top}\boldsymbol{Y}_{s})|^2ds \\ 
	& + \int_{0}^{t} 2k |\boldsymbol{Y}_{s}|^{2k-2} \left( \boldsymbol{Y}_{s}
	\right)^{\top} \boldsymbol{U}_{s} b(s,\boldsymbol{U}_{s}^{\top}\boldsymbol{Y}_{s})  dW_s.
\end{aligned}
\end{equation*}
We take the expectation of both sides,  and, noting the progressive measurability of $\boldsymbol{Y}_t$, $a(t,\boldsymbol{U}_{t}^{\top}\boldsymbol{Y}_{t})$, and $b(t,\boldsymbol{U}_{t}^{\top}\boldsymbol{Y}_{t})$, use Fubini's theorem to obtain 
\begin{equation*}
\begin{aligned}
	\mathbb{E}[|\boldsymbol{Y}_{t}|^{2k}] & \leq \mathbb{E}[|\boldsymbol{Y}_{0}|^{2k}] +\int_{0}^{t}\mathbb{E}[ 2k |\boldsymbol{Y}_{s}|^{2k-2} \left(\boldsymbol{U}_{s} a(s,\boldsymbol{U}_{s}^{\top}\boldsymbol{Y}_{s})\right)^{\top}\boldsymbol{Y}_{s} ]ds + \\
	& + \int_{0}^{t}\mathbb{E}[2k^2|\boldsymbol{Y}_{s}|^{2k-2} \mathrm{Tr}\left(\left(\boldsymbol{U}_{s} b(s,\boldsymbol{U}_{s}^{\top}\boldsymbol{Y}_{s})\right)\left(\boldsymbol{U}_{s} b(s,\boldsymbol{U}_{s}^{\top}\boldsymbol{Y}_{s})\right)^{\top}\right)]ds  \\
	& \leq \mathbb{E}[|\boldsymbol{Y}_{0}|^{2k}] +\int_{0}^{t}\mathbb{E}[ k |\boldsymbol{Y}_{s}|^{2k-2} \left(| a(s,\boldsymbol{U}_{s}^{\top}\boldsymbol{Y}_{s})|^2+ |\boldsymbol{Y}_{s}|^2 \right)] ds  \\
	& + \int_{0}^{t}\mathbb{E}[2k^2|\boldsymbol{Y}_{s}|^{2k-2} \|b(s,\boldsymbol{U}_{s}^{\top}\boldsymbol{Y}_{s})\|_{\mathrm{F}}^2ds \\
	& \leq \mathbb{E}[|\boldsymbol{Y}_{0}|^{2k}] +\int_{0}^{t}\mathbb{E}[ k |\boldsymbol{Y}_{s}|^{2k-2} C_{\mathrm{lgb}}\left(1+(1+1/C_{\mathrm{lgb}})|\boldsymbol{Y}_{s}|^2\right)] ds \\
	& + \int_{0}^{t}\mathbb{E}[2k^2|\boldsymbol{Y}_{s}|^{2k-2} C_{\mathrm{lgb}}(1+|\boldsymbol{Y}_{s}|^2)]ds \\
	& \leq \mathbb{E}[|\boldsymbol{Y}_{0}|^{2k}] +\int_{0}^{t}\mathbb{E}[ k |\boldsymbol{Y}_{s}|^{2k-2} C_{\mathrm{lgb}}\left(1+(1+1/C_{\mathrm{lgb}})|\boldsymbol{Y}_{s}|^2\right)] ds \\
	& + \int_{0}^{t}\mathbb{E}[2k^2|\boldsymbol{Y}_{s}|^{2k-2} C_{\mathrm{lgb}}\left(1+(1+1/C_{\mathrm{lgb}})|\boldsymbol{Y}_{s}|^2\right)]ds \\
	& \leq \mathbb{E}[|\boldsymbol{Y}_{0}|^{2k}] +\int_{0}^{t}\mathbb{E}[ 3k^2 |\boldsymbol{Y}_{s}|^{2k-2} C_{\mathrm{lgb}}\left(1+(1+1/C_{\mathrm{lgb}})|\boldsymbol{Y}_{s}|^2\right)]ds \\
	& \leq \mathbb{E}[|\boldsymbol{Y}_{0}|^{2k}] +   3k^2\frac{C_{\mathrm{lgb}}}{\left(\sqrt{(1+{1}/{C_{\mathrm{lgb}}})}\right)^{2k-2}}  \\
	& \cdot \int_{0}^{t}\mathbb{E}[ \left(\sqrt{(1+{1}/{C_{\mathrm{lgb}}})}\right)^{2k-2}|\boldsymbol{Y}_{s}|^{2k-2} \left(1+\left(\sqrt{(1+1/C_{\mathrm{lgb}})}\right)^2|\boldsymbol{Y}_{s}|^2\right)]ds 
\end{aligned}
\end{equation*}
Then, by using the relation $(1+r^2)r^{2k-2} \leq 1+2r^{2k}$ for $r\in \mathbb{R}_{+}$, the statement follows by Gronwall's lemma:
\begin{equation*}
\begin{aligned}
	\mathbb{E}[|\boldsymbol{Y}_{t}|^{2k}] & \leq \mathbb{E}[|\boldsymbol{Y}_{0}|^{2k}] + 3k^2 \frac{C_{\mathrm{lgb}}}{\left(\sqrt{(1+{1}/{C_{\mathrm{lgb}}})}\right)^{2k-2}}\int_{0}^{t}\mathbb{E}[ (1+2(\sqrt{(1+1/C_{\mathrm{lgb}})})^{2k}|\boldsymbol{Y}_{s}|^{2k}]ds \\
	& \leq \mathbb{E}[|\boldsymbol{Y}_{0}|^{2k}] + 3k^2 \frac{C_{\mathrm{lgb}}}{\left(\sqrt{(1+{1}/{C_{\mathrm{lgb}}})}\right)^{2k-2}}T + 6k^2C_{\mathrm{lgb}}(1+1/C_{\mathrm{lgb}})\int_{0}^{t}\mathbb{E}[ |\boldsymbol{Y}_{s}|^{2k}]ds
\end{aligned}
\end{equation*}
\end{proof}
With Lemma \ref{lem: 2k moments}, we can now establish a H\"{o}lder-type bound on $\mathbb{E}[|X_t - X_{t'}|^{2k}]$ for  

\noindent $0\leq t' < t < T_e$, which by Kolmogorov-Chenstov theorem, implies the existence of an a.s. continuous version of $X_t$.\begin{prop}\label{prop: 2k differences}
Suppose that the DO approximation $X_t$ of \eqref{eq:true-eq} exists on $[0,T_e)$. 
Suppose that Assumption~\ref{assump:2k moment X0} holds for some $k \in \mathbb{N}$.
If $a$ and $b$ satisfy Assumption~\ref{assump:lin-growth}, then there exists a constant $C:=C(X_0,R,k,T_e,C_{\mathrm{lgb}}) >0$ such that
\begin{equation}\label{eq:hold-2k}
\mathbb{E}[|X_t - X_{t'}|^{2k}] \leq C(t - t')^{k}, \quad 0 \leq t' < t < T_e.
\end{equation}
\end{prop}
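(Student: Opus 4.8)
The plan is to start from the integral form of the parametrisation-free dynamics~\eqref{eq:eq-manifold DLR} on the sub-interval $[t',t]$, that is \eqref{eq:DLR-diff}, and to estimate the drift integral $\int_{t'}^{t}\mu_s\,ds$ and the stochastic integral $\int_{t'}^{t}\Sigma_s\,dW_s$ separately, where $\mu_s:=(I_{d\times d}-\mathscr{P}_{\mathcal{U}(X_s)})[\mathscr{P}_{\mathcal{Y}(X_s)}a(s,X_s)]+\mathscr{P}_{\mathcal{U}(X_s)}a(s,X_s)$ and $\Sigma_s:=\mathscr{P}_{\mathcal{U}(X_s)}b(s,X_s)$. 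Using $|x+y|^{2k}\le 2^{2k-1}(|x|^{2k}+|y|^{2k})$, the bound \eqref{eq:hold-2k} follows once we show $\mathbb{E}\bigl[\bigl|\int_{t'}^t\mu_s\,ds\bigr|^{2k}\bigr]\le C(t-t')^{k}$ and $\mathbb{E}\bigl[\bigl|\int_{t'}^t\Sigma_s\,dW_s\bigr|^{2k}\bigr]\le C(t-t')^{k}$. The crucial preliminary fact is that all ``ambient'' $2k$-th moments are uniformly bounded on $[0,T_e)$: by Lemma~\ref{lem: 2k moments} together with \eqref{eq:2k-mom}, $\sup_{s<T_e}\mathbb{E}[|X_s|^{2k}]=\sup_{s<T_e}\mathbb{E}[|\boldsymbol{Y}_s|^{2k}]<\infty$, hence, by Assumption~\ref{assump:lin-growth}, $\sup_{s<T_e}\mathbb{E}[|a(s,X_s)|^{2k}]<\infty$ and $\sup_{s<T_e}\mathbb{E}[\|b(s,X_s)\|_{\mathrm F}^{2k}]<\infty$, with all these bounds depending only on $\mathbb{E}[|X_0|^{2k}]$, $k$, $T_e$, $C_{\mathrm{lgb}}$.

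For the stochastic integral, since $\mathscr{P}_{\mathcal{U}(X_s)}$ is a \emph{deterministic} orthogonal projection matrix on $\mathbb{R}^d$, one has $\|\Sigma_s\|_{\mathrm F}\le\|b(s,X_s)\|_{\mathrm F}$ pathwise; then the Burkholder--Davis--Gundy inequality followed by Jensen's inequality gives $\mathbb{E}\bigl[\bigl|\int_{t'}^t\Sigma_s\,dW_s\bigr|^{2k}\bigr]\le c_k(t-t')^{k-1}\int_{t'}^{t}\mathbb{E}[\|b(s,X_s)\|_{\mathrm F}^{2k}]\,ds\le C(t-t')^{k}$. For the drift integral, Jensen's inequality gives $\mathbb{E}\bigl[\bigl|\int_{t'}^t\mu_s\,ds\bigr|^{2k}\bigr]\le (t-t')^{2k-1}\int_{t'}^{t}\mathbb{E}[|\mu_s|^{2k}]\,ds\le T_e^{k}(t-t')^{k}\,\sup_{s<T_e}\mathbb{E}[|\mu_s|^{2k}]$, using $t-t'<T_e$, so it remains to bound $\sup_{s<T_e}\mathbb{E}[|\mu_s|^{2k}]$. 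Since the ranges of $I_{d\times d}-\mathscr{P}_{\mathcal{U}(X_s)}$ and $\mathscr{P}_{\mathcal{U}(X_s)}$ are Euclidean-orthogonal, $|\mu_s|^2=|(I_{d\times d}-\mathscr{P}_{\mathcal{U}(X_s)})\mathscr{P}_{\mathcal{Y}(X_s)}a(s,X_s)|^2+|\mathscr{P}_{\mathcal{U}(X_s)}a(s,X_s)|^2\le|\mathscr{P}_{\mathcal{Y}(X_s)}a(s,X_s)|^2+|a(s,X_s)|^2$ pathwise; the second summand is controlled by the preliminary fact, so everything hinges on $\sup_{s<T_e}\mathbb{E}[|\mathscr{P}_{\mathcal{Y}(X_s)}a(s,X_s)|^{2k}]$.

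To bound this term I would use the orthonormal basis $\boldsymbol{\varphi}_s=(\varphi_1,\dots,\varphi_R)^{\top}$ of $\mathrm{span}\{Y^1_s,\dots,Y^R_s\}$ from Lemma~\ref{lem:SVD-finite-rank-op} (a whitening of $\boldsymbol{Y}_s$, so that $\mathbb{E}[\boldsymbol{\varphi}_s\boldsymbol{\varphi}_s^{\top}]=I_{R\times R}$ and $|\boldsymbol{\varphi}_s|\le\|{C}_{\boldsymbol{Y}_s}^{-1}\|_{\mathrm F}^{1/2}|\boldsymbol{Y}_s|$). Applying $\mathscr{P}_{\mathcal{Y}(X_s)}$ componentwise to $a(s,X_s)$ gives $\mathscr{P}_{\mathcal{Y}(X_s)}a(s,X_s)=\mathbb{E}[a(s,X_s)\boldsymbol{\varphi}_s^{\top}]\,\boldsymbol{\varphi}_s$, whence, pathwise, $|\mathscr{P}_{\mathcal{Y}(X_s)}a(s,X_s)|\le\|\mathbb{E}[a(s,X_s)\boldsymbol{\varphi}_s^{\top}]\|_{\mathrm F}\,|\boldsymbol{\varphi}_s|$; by Bessel's inequality $\|\mathbb{E}[a(s,X_s)\boldsymbol{\varphi}_s^{\top}]\|_{\mathrm F}^2\le\mathbb{E}[|a(s,X_s)|^2]$, which is uniformly bounded on $[0,T_e)$ by the preliminary fact. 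Hence $\mathbb{E}[|\mathscr{P}_{\mathcal{Y}(X_s)}a(s,X_s)|^{2k}]\le(\mathbb{E}[|a(s,X_s)|^2])^{k}\,\mathbb{E}[|\boldsymbol{\varphi}_s|^{2k}]\le(\mathbb{E}[|a(s,X_s)|^2])^{k}\,\|{C}_{\boldsymbol{Y}_s}^{-1}\|_{\mathrm F}^{k}\,\mathbb{E}[|\boldsymbol{Y}_s|^{2k}]$, and collecting the three estimates yields \eqref{eq:hold-2k}; the Kolmogorov--Chentsov criterion (with $\alpha=2k$ and exponent $k$) then produces an a.s. Hölder-continuous version of $X$.

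The main obstacle is precisely the factor $\mathbb{E}[|\boldsymbol{\varphi}_s|^{2k}]$: the orthogonal projection $\mathscr{P}_{\mathcal{Y}(X_s)}$ is an $L^2(\Omega)$-contraction but \emph{not} an $L^{2k}(\Omega)$-contraction, so the higher moment of the whitened stochastic basis cannot be controlled by the ambient data alone and genuinely involves the inverse Gram matrix ${C}_{\boldsymbol{Y}_s}^{-1}$. By the continuity of $s\mapsto{C}_{\boldsymbol{Y}_s}^{-1}$, the quantity $\sup_{s\in[0,T]}\|{C}_{\boldsymbol{Y}_s}^{-1}\|_{\mathrm F}$ is finite on every $[0,T]$ with $T<T_e$ (although, by Proposition~\ref{prop: max T_e}, it need not be finite on all of $[0,T_e)$ when $T_e<\infty$); carrying out the above estimate on such intervals suffices, since the Kolmogorov--Chentsov criterion only requires the bound on compact time-intervals, which then patch together on $[0,T_e)$ and, combined with the $L^2$-continuity of $X$ at $T_e$ established earlier, give the desired a.s.\ continuous extension of $X$ up to (and, under the further hypotheses of the section, beyond) the explosion time.
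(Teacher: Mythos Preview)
For the stochastic integral your argument matches the paper's (the paper derives the BDG-type inequality directly via It\^o's formula for $|I_t|^{2k}$, you quote BDG; both then use Jensen and the pathwise bound $\|\Sigma_s\|_{\mathrm F}\le\|b(s,X_s)\|_{\mathrm F}$). The substantive difference is in the drift. After Jensen in time, the paper bounds the integrand via
\[
|\mathfrak{P}_s a(s,X_s)|^{2}\le\|\mathfrak{P}_s\|_2^{2}\,|a(s,X_s)|^{2}\le|a(s,X_s)|^{2}
\]
pathwise, where $\mathfrak{P}_s=\mathscr{P}_{\mathcal{U}(X_s)}+\mathscr{P}_{\mathcal{Y}(X_s)}-\mathscr{P}_{\mathcal{U}(X_s)}\mathscr{P}_{\mathcal{Y}(X_s)}$; this gives $\mathbb{E}[|\mu_s|^{2k}]\le\mathbb{E}[|a(s,X_s)|^{2k}]$, and then Lemma~\ref{lem: 2k moments} together with Assumption~\ref{assump:lin-growth} produces a constant depending only on $(X_0,R,k,T_e,C_{\mathrm{lgb}})$, uniformly on $[0,T_e)$. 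You are right to flag that $\mathscr{P}_{\mathcal{Y}(X_s)}$ is only an $L^2(\Omega)$-contraction, so this pathwise inequality is delicate; nonetheless, that is the mechanism the paper uses to obtain the uniform constant in~\eqref{eq:hold-2k}, and it is precisely what your route lacks.

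Your alternative through the whitened basis $\boldsymbol{\varphi}_s$ does not recover the proposition as stated. The bound
\[
\mathbb{E}\bigl[|\mathscr{P}_{\mathcal{Y}(X_s)}a(s,X_s)|^{2k}\bigr]\le\bigl(\mathbb{E}[|a(s,X_s)|^2]\bigr)^{k}\,\|{C}_{\boldsymbol{Y}_s}^{-1}\|_{\mathrm F}^{k}\,\mathbb{E}[|\boldsymbol{Y}_s|^{2k}]
\]
brings in $\|{C}_{\boldsymbol{Y}_s}^{-1}\|_{\mathrm F}$, which by Proposition~\ref{prop: max T_e} diverges as $s\uparrow T_e$ whenever $T_e<\infty$. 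You therefore obtain~\eqref{eq:hold-2k} only on compact subintervals $[0,T]\subset[0,T_e)$, with a constant $C(T)\to\infty$ as $T\to T_e$; this is a strict weakening of the claim, whose whole point is that $C$ is uniform on $[0,T_e)$. Your proposed repair for the downstream application---patching Kolmogorov--Chentsov on compacts and then appealing to the $L^2$-limit at $T_e$---does not close the gap for Theorem~\ref{thm:extension} either: a.s.\ path-continuity on $[0,T_e)$ together with an $L^{2}$-limit at $T_e$ does not yield an a.s.\ continuous extension to $T_e$; the Kolmogorov--Chentsov argument on $[0,T_e]$ requires exactly the uniform H\"older bound that your estimate fails to deliver.
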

\begin{proof}
The proof follows closely the one in \cite[Theorem $4.5.4$]{kloeden1992stochastic}. 
First, notice that, in the view of \eqref{eq:2k-mom}, Lemma \ref{lem: 2k moments} gives an estimate of the $2k$-moments of the DO approximation $X$ for some $k \in \mathbb{N}$.  
Now we use the inequality $|r_1 + \dots + r_m|^{p} \leq m^{p-1}\sum\limits_{i=1}^{m} |r_i|^p $ for $r \in \mathbb{R}$ twice and Jensen's inequality to \eqref{eq:DLR-diff} to obtain
\begin{equation*}
\begin{aligned}
	\mathbb{E}[|X_t - X_{t'}|^{2k}] & =  
	\mathbb{E}[|  \int_{t'}^{t}\bigl(\mathscr{P}_{\mathcal{U}({X}_s)}+\mathscr{P}_{\mathcal{Y}({X}_s)}-\mathscr{P}_{\mathcal{U}({X}_s)}\mathscr{P}_{\mathcal{Y}({X}_s)}\bigr)a(s,X_{s})\,ds \\
	&\qquad\qquad+ \int_{t'}^{t} \mathscr{P}_{\mathcal{U}(X_s)} b(s, X_s) dW_s|^{2k}]\\     
	& \leq 2^{2k-1}   \mathbb{E}[|
	\int_{t'}^{t}\bigl(\mathscr{P}_{\mathcal{U}({X}_s)}+\mathscr{P}_{\mathcal{Y}({X}_s)}-\mathscr{P}_{\mathcal{U}({X}_s)}\mathscr{P}_{\mathcal{Y}({X}_s)}\bigr)a(s,X_{s})\,ds
|^{2k}] \\
	&\qquad\qquad + 2^{2k-1}  \mathbb{E}[|\int_{t'}^{t}\mathscr{P}_{\mathcal{U}(X_s)}   b(s, X_s) dW_s |^{2k}] \\
	& \leq 2^{2k-1} (t-t')^{2k-1}  \mathbb{E}[\int_{t'}^{t}  \left(\| \mathscr{P}_{\mathcal{U}(X_s)}+\mathscr{P}_{\mathcal{Y}(X_s)} -\mathscr{P}_{\mathcal{U}(X_s)}\mathscr{P}_{\mathcal{Y}(X_s)} \|^{2}_2 |a(s, X_s) |^{2}\right)^k ds] \\
	&\qquad\qquad+ 2^{2k-1}  \mathbb{E}[|\int_{t'}^{t} \mathscr{P}_{\mathcal{U}(X_s)} b(s, X_s) dW_s |^{2k} ]\\
	& \leq 2^{2k-1} (t-t')^{2k-1}  \mathbb{E}[\int_{t'}^{t} |a(s, X_s) |^{2k} ds] + 2^{2k-1}  \mathbb{E}[|\int_{t'}^{t} \mathscr{P}_{\mathcal{U}(X_s)} b(s, X_s) dW_s |^{2k} ]\\
\end{aligned}
\end{equation*}

For the first term in the last inequality above, using
the inequality $(1+r^2)^k \leq 2^{k-1}(1+r^{2k})$ for $r\in \mathbb{R}_{+}$, Assumption~\ref{assump:lin-growth}
and Lemma \ref{lem: 2k moments}, we get
\begin{equation}\label{bound moments a}
\begin{aligned}
	\mathbb{E}[\int_{t'}^{t} |a(s, X_s) |^{2k} ds ]& \leq  \mathbb{E}[\int_{t'}^{t} C_{\mathrm{lgb}}^{k} (1+ |X_s |^{2})^k ds ]\\
	&  \leq  C_{\mathrm{lgb}}^{k} 2^{k-1} \int_{t'}^{t}  (1+ \mathbb{E}[|\boldsymbol{U}_{s}^{\top}\boldsymbol{Y}_{s} |^{2k}]) ds \\
	& \leq C_{\mathrm{lgb}}^{k} 2^{k-1} \int_{t'}^{t}  (1+ \mathbb{E}[|\boldsymbol{Y}_{s} |^{2k}]) ds \\
	& \leq C_{\mathrm{lgb}}^{k} 2^{k-1} \int_{t'}^{t}  \left(1+  \left(\mathbb{E}[|\boldsymbol{Y}_{0}|^{2k}] + K_1(T_e) \right)K_2(T_e) \right)ds \\
	& = C_{\mathrm{lgb}}^{k} 2^{k-1} \left(1+  \left(\mathbb{E}[|X_{0}|^{2k}] + K_1(T_e) \right)K_2(T_e) \right)(t-t').
\end{aligned}
\end{equation}
For the second term, we will show
\begin{equation}\label{ito inequality 2k}
\mathbb{E}[|\int_{t'}^{t} \mathscr{P}_{\mathcal{U}(X_s)} b(s,X_s)dW_s|^{2k}] \leq (t - t')^{k-1}[k(2k-1)]^k \int_{t'}^{t}\mathbb{E}[\| b(s,X_s)\|_{\mathrm{F}}^{2k}]ds.
\end{equation}
To see this, let 
$I_t := \int_{t'}^{t} \mathscr{P}_{\mathcal{U}(X_s)} b(s,X_s)dW_s$. Then the Itô formula implies
\begin{equation} \label{eq: 2k-der of I_t}
d |I_t |^{2k} = 2k |I_t |^{2k-1} \mathrm{sgn}(I_t) \mathscr{P}_{\mathcal{U}(X_t)}b(t,X_t)dW_t + \frac{1}{2} 2k(2k-1) |I_t |^{2k-2}\|\mathscr{P}_{\mathcal{U}(X_t)}b(t,X_t)\|_{\mathrm{F}}^2dt.
\end{equation}
Taking the expectation of both sides, by Hölder's inequality we get
\begin{equation*}
\begin{aligned}
	\mathbb{E}[|\int_{t'}^{t}\mathscr{P}_{\mathcal{U}(X_s)} b(s,X_s)dW_s|^{2k}]=\mathbb{E}[|I_t|^{2k}]
	& = 0 + k(2k-1) \int_{t'}^{t}\mathbb{E}[|I_s |^{2k-2}\|\mathscr{P}_{\mathcal{U}(X_s)}b(s,X_s)\|_{\mathrm{F}}^2]ds \\
	& \leq k(2k-1)\int_{t'}^{t}\mathbb{E}[|I_s |^{2k}]^{1-\frac{1}{k}} \mathbb{E}[\|b(s,X_s)\|_{\mathrm{F}}^{2k}]^{\frac{1}{k}}ds 
\end{aligned}
\end{equation*}
By Lemma \ref{lem: 2k moments}, we have $\int_{t'}^t \mathbb{E}|X_s|^{2k}ds=\int_{t'}^t \mathbb{E}|\boldsymbol{Y}_s|^{2k}ds<\infty$ and, hence, by $\|A B\|_F \leq \|A\|_2\|B\|_F$ and \eqref{eq:lin-growth}, it holds $\mathbb{E}[|\int_{t'}^{t} \mathscr{P}_{\mathcal{U}(X_s)} b(s,X_s)dW_s|^{2k}]<\infty$. 

This implies that $|I_\cdot|^{2k}=|\int_{t'}^{\cdot} \mathscr{P}_{\mathcal{U}(X_s)} b(s,X_s)dW_s|^{2k}$ is a submartingale. Therefore, it follows
\begin{equation*}
\begin{aligned}
	\mathbb{E}[|\int_{t'}^{t} \mathscr{P}_{\mathcal{U}(X_s)} b(s,X_s)dW_s|^{2k}] & \leq k(2k-1) \mathbb{E}[|I_t |^{2k}]^{1-\frac{1}{k}} \int_{t'}^{t}\mathbb{E}[ \| b(s,X_s)\|_{\mathrm{F}}^{2k}]^{\frac{1}{k}}ds.
\end{aligned}
\end{equation*}
Raising both sides to the $k$-th power and dividing them by $\mathbb{E}[|I_t |^{2k}]^{k-1}$, we get
\begin{equation*}
\mathbb{E}[|\int_{t'}^{t} \mathscr{P}_{\mathcal{U}(X_s)} b(s,X_s)dW_s|^{2k}] \leq [k(2k-1)]^k \left(\int_{t'}^{t} \mathbb{E}[\| b(s,X_s)\|_{\mathrm{F}}^{2k}]^{\frac{1}{k}}ds\right)^k,
\end{equation*}
and thus \eqref{ito inequality 2k} follows by Jensen's inequality. 
Therefore, bounding $\int_{t'}^{t} \mathbb{E}[\| b(s,X_s)\|_{\mathrm{F}}^{2k}]ds$ similarly to \eqref{bound moments a} we obtain
\begin{equation*}
\mathbb{E}[|\int_{t'}^{t} \mathscr{P}_{\mathcal{U}(X_s)} b(s,X_s)dW_s|^{2k}] \leq (t - t')^{k}[k(2k-1)]^k C_{\mathrm{lgb}}^{k} 2^{2k-1} \left(1+  \left(\mathbb{E}[|X_{0}|^{2k}] + K_1(T_e) \right)K_2(T_e) \right).
\end{equation*}
Putting all together, \eqref{eq:hold-2k} follows:
\begin{equation}
\begin{aligned}
	\mathbb{E}[|X_t - X_{t'}|^{2k}] & \leq 2^{2k-1} (t-t')^{2k-1}  \mathbb{E}[\int_{t'}^{t} |a(s, X_s) |^{2k} ds] + 2^{2k-1}  \mathbb{E}[|\int_{t'}^{t} \mathscr{P}_{\mathcal{U}(\mathcal{X}_s)} b(s, X_s) dW_s |^{2k} ] \\
	& \leq 2^{4k-2} (t-t')^{2k}  C_{\mathrm{lgb}}^{k} \left(1+  \left(\mathbb{E}[|X_{0}|^{2k}] + K_1(T_e) \right)K_2(T_e) \right) \\
	& + 2^{4k-2} (t - t')^{k}[k(2k-1)]^k C_{\mathrm{lgb}}^{k} \left(1+  \left(\mathbb{E}[|X_{0}|^{2k}] + K_1(T_e) \right)K_2(T_e) \right) \\
	&  \leq (t - t')^{k} \left(T_e^{k} + [k(2k-1)]^k\right) \left[ 2^{4k-2}C_{\mathrm{lgb}}^{k} \left(1 + \left(\mathbb{E}[|X_{0}|^{2k}] + K_1(T_e) \right) K_2(T_e) \right)\right].
\end{aligned}
\end{equation}
\end{proof}

Having proved these results concerning boundedness of $2k$-moments of the DO solution, we are now ready to extend it up to the explosion time $T_e$ with a.s. continuous paths.
\begin{thm}\label{thm:extension}
Let $a$ and $b$ satisfy Assumptions~\ref{assump:Lip} and~\ref{assump:lin-growth}.
Let an DO approximation $X$ on $[0,T_e)$ be given.
If $X_0$ satisfies Assumption~\ref{assump:2k moment X0} for some integer $k > 1$, then 
$X$ admits a unique continuous extension to $[0,T_e]$, almost surely. This extension is Hölder continuous on $[0,T_e]$, and satisfies $\mathbb{E}[|X_t|^{2k}]<\infty$ for all $t\in[0,T_e]$.
\end{thm}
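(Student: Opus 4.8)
The plan is to combine the moment estimate of Proposition~\ref{prop: 2k differences} with the Kolmogorov--Chentsov continuity theorem, the one extra ingredient being that this estimate has to be pushed to the \emph{closed} interval $[0,T_e]$ before Kolmogorov--Chentsov can be applied there.

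First I would use the $L^{2}$-extension constructed earlier in this subsection: $X_{T_e}:=\lim_{t\uparrow T_e}X_{t}$ exists in $L^{2}(\Omega;\mathbb{R}^{d})$, hence $X_{t}\to X_{T_e}$ in $L^{2}$ as $t\uparrow T_e$ and there is a subsequence $t_{m}\uparrow T_e$ along which $X_{t_{m}}\to X_{T_e}$ almost surely. From this, two applications of Fatou's lemma give what is needed. On the one hand, $\mathbb{E}[|X_{T_e}|^{2k}]\le\liminf_{m}\mathbb{E}[|X_{t_{m}}|^{2k}]\le(\mathbb{E}[|\boldsymbol{Y}_{0}|^{2k}]+K_{1}(T_e))K_{2}(T_e)<\infty$ by Lemma~\ref{lem: 2k moments} and \eqref{eq:2k-mom}, which together with \eqref{eq:2k-mom} establishes $\mathbb{E}[|X_{t}|^{2k}]<\infty$ for all $t\in[0,T_e]$. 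On the other hand, applying Fatou along $t_{m}\uparrow T_e$ to the bound of Proposition~\ref{prop: 2k differences} yields $\mathbb{E}[|X_{T_e}-X_{t'}|^{2k}]\le C(T_e-t')^{k}$ for every $t'\in[0,T_e)$, with the same $C$ as in \eqref{eq:hold-2k}; hence $\mathbb{E}[|X_{t}-X_{s}|^{2k}]\le C|t-s|^{k}$ for all $s,t\in[0,T_e]$.

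Next, since $k>1$ so that $k=1+(k-1)$ with $k-1>0$, I would invoke the Kolmogorov--Chentsov continuity theorem on the compact interval $[0,T_e]$: it provides a modification $\widehat{X}$ of $(X_{t})_{t\in[0,T_e]}$ with almost surely Hölder-continuous paths on $[0,T_e]$, of any exponent $\gamma<(k-1)/(2k)$. To conclude that $\widehat{X}$ is the sought extension I would note that $\widehat{X}_{t}=X_{t}$ almost surely for each fixed $t$, that $\widehat{X}$ and $X$ both have almost surely continuous paths on $[0,T_e)$ (for $X$ this follows from the definition of a DO solution on each $[0,T]\subset[0,T_e)$ and the absolute continuity of $\boldsymbol{U}$), whence $\widehat{X}$ and $X$ are indistinguishable on $[0,T_e)$ and $\widehat{X}_{T_e}=X_{T_e}$ almost surely. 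For uniqueness, any two processes on $[0,T_e]$ with almost surely continuous paths that agree almost surely at each $t\in[0,T_e)$ must agree almost surely on $[0,T_e)\cap\mathbb{Q}$ and therefore, by path continuity, be indistinguishable on $[0,T_e]$. The moment bound $\mathbb{E}[|X_{t}|^{2k}]<\infty$ on $[0,T_e]$ is exactly what was shown above.

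I expect the only genuinely delicate step to be the extension of the two-parameter moment bound to the endpoint $T_e$: it relies on already having the $L^{2}$-limit $X_{T_e}$ at hand (so that Fatou's lemma may be applied along a subsequence) and on the uniform-in-$T$ moment bound of Lemma~\ref{lem: 2k moments}; once that is in place, the rest is a routine application of Kolmogorov--Chentsov together with the standard fact that continuous modifications of a process are indistinguishable.
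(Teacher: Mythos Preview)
Your proposal is correct and follows essentially the same route as the paper: extend the Hölder-type moment bound of Proposition~\ref{prop: 2k differences} to the closed interval, apply Kolmogorov--Chentsov, and then argue indistinguishability with the original process on $[0,T_e)$. The only minor technical difference is that the paper observes directly that \eqref{eq:hold-2k} makes $(X_t)_{t<T_e}$ Cauchy in $L^{2k}(\Omega;\mathbb{R}^d)$, so the limit $X_{T_e}$ lies in $L^{2k}$ and the bound passes to the endpoint by $L^{2k}$-continuity, whereas you reach the same conclusion via the $L^2$-limit plus an a.s.\ convergent subsequence and Fatou's lemma; both work equally well.
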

\begin{proof}
By Assumption~\ref{assump:2k moment X0}, 
Proposition~\ref{prop: 2k differences} holds and $(X_{t})_{0\leq t< T_e}$ admits an extension $X_{T_e}:=\lim_{t\uparrow T_e}X_{t}\in L^{2k}(\Omega;\mathbb{R}^{d})$ that is unique in $L^{2k}(\Omega;\mathbb{R}^{d})$.
From $\|X_{t}-X_{t'}\|_{L^2(\Omega;\mathbb{R}^d)}\leq \|X_{t}-X_{t'}\|_{L^{2k}(\Omega;\mathbb{R}^d)}$ for $0\leq t' < t < T_e$, the extension is unique in $L^{p}(\Omega;\mathbb{R}^{d})$ for $p\in[2,2k]$. 
By construction of $X_{T_e}$, we have
\begin{equation}
\mathbb{E}[|X_t - X_{t'}|^{2k}] \leq C(t - t')^{k}, \quad 0 \leq t' < t \leq T_e.
\end{equation} 
Therefore, by Kolmogorov-Chenstov Theorem (see for example \cite{karatzas2012brownian}) there exists a version $(\tilde{X}_{t})_{t\in[0,T_e]}$ of the DO approximation that is $q$-Hölder continuous for all $0 < q < \frac{k-1}{2k}$. 
By construction, $(\tilde{X}_{t})_{t\in[0,T_e]}$ is a process with bounded $2k$-moments. 

Finally, let us see that $(X_t)_{t\in[0,T_e]}$ and $(\tilde{X}_t)_{t\in[0,T_e]}$ are indistinguishable. 
Indeed, by construction of $\tilde{X}$ we have $X_{T_e}=\tilde{X}_{T_e}$ a.s. 
Moreover, 
$(X_t)_{t\in[0,T_e)}$ and $(\tilde{X}_t)_{t\in[0,T_e)}$ have a.s. continuous paths and are versions of each other, and thus indistinguishable. Hence, $(X_t)_{t\in[0,T_e]}$ and $(\tilde{X}_t)_{t\in[0,T_e]}$ are indistinguishable. This completes the proof.
\end{proof}
Together with Proposition~\ref{prop: max T_e}, Theorem~\ref{thm:extension} gives us an insight into how to continue a DO approximation \emph{beyond} the explosion time $T_e$. Let a DO approximation $X$ of \eqref{eq:true-eq} with explosion time $T_e$ that satisfies \eqref{2k moment in 0} be given. Suppose $\mathrm{dim}\left(\mathrm{Im}(\mathbb{E}[\mathcal{X}_t\,\cdot\,])\right)=R$ for all $t \in [0,T_e)$, with a positive integer $R$. By  Theorem~\ref{thm:extension}, $X$ can be continuously extended up to $T_e$, while from Proposition~\ref{prop: max T_e} we know $\lim\limits_{t\uparrow T_e}\|{C}_{\boldsymbol{Y}_{t}}^{-1}\|_{\mathrm{F}}=\infty$. This implies that $\mathrm{dim}\left(\mathrm{Im}(\mathbb{E}[\mathcal{X}_{T_e}\,\cdot\,])\right)=R'<R$. Therefore, one can extend our DO approximation $X$ continuously in $t$ beyond $T_e$ by considering the DO system \eqref{eq:DLR-eq-U}--\eqref{eq:DLR-eq-Y} with initial datum $X_{T_e}$. The corresponding DO solution satisfies 
$\boldsymbol{U}_t \in \mathbb{R}^{R'\times d}$ and  $\boldsymbol{Y}_t \in [L^2(\Omega)]^{R'}$  for 
$t\in[T_e,T_e+T'_e)$, where $T'_e$ is the new explosion time for $(\boldsymbol{U}_t,\boldsymbol{Y}_t)$.

\subsection{The case of uniformly positive-definite diffusion} \label{sec: cov}
In the previous section, we have shown that by assuming the boundedness of the $2k$ moments of the initial condition of the DO approximation for $k > 1$, the DO approximation can be extended up to the explosion time $T_e$. 
It turns out that, under the condition that we will introduce in the following, the explosion time $T_e$ is never finite; as a result,  rank-$R$ DO solution exists globally.

A sufficient condition is that the diffusion matrix $b(t, x)b(t, x)^{\top}$ is positive definite with a lower bound on the smallest eigenvalue, where the bound is uniform in $t$ and $x$. This condition turns out to ensure that the smallest eigenvalue of the Gram matrix $\mathbb{E}[\boldsymbol{Y}_{t}\boldsymbol{Y}^{\top}_{t}]$ remains bounded below uniformly in $t$ and $x$, which in turn guarantees the global existence of the DO solution.
In the following, we use the notation $A \succ B$ (respectively  $A \succeq B$) with $A,B$ square matrices to indicate that $A-B$ is positive definite (respectively positive semidefinite). 

\begin{prop} \label{prop:covariance full-rank}
Suppose the rank $R$ DO solution $(\boldsymbol{U},\boldsymbol{Y})$ exists on $[0,T]$. Assume moreover that there exist $\sigma_{\boldsymbol{Y}_{0}},\sigma_B >0$ such that
\begin{align}
C_{\boldsymbol{Y}_{0}}&:= \mathbb{E}[\boldsymbol{Y}_{0}\boldsymbol{Y}_{0}^{\top}] \succeq \sigma_{\boldsymbol{Y}_{0}} \, I_{R \times R}\label{eq: sigma_0};\\
b(t, x)b(t, x)^{\top}&\succeq\sigma_{B}\, I_{d \times d}, \ \text{ for any }\ t \in [0,T]\text{ and for any }\ x \in \mathbb{R}^d. \label{eq: sigma_B}
\end{align}
Then for all  $t \in [0,T]$ we have
\begin{equation}
C_{\boldsymbol{Y}_{t}}\succeq \min\{ \sigma_{\boldsymbol{Y}_{0}}; \frac{\sigma^2_{B}}{4C_{\mathrm{lgb}}(1+M)} \} \, I_{R \times R} ,
\end{equation}
where $M=M(T)$ is defined in Lemma \ref{lem:stab}.
\end{prop}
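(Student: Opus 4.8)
The plan is to reduce the matrix inequality to a scalar differential inequality. Fix $v\in\mathbb{R}^R$ with $|v|=1$ and set $f(t):=v^{\top}C_{\boldsymbol{Y}_t}v=\mathbb{E}[(\boldsymbol{Y}_t^{\top}v)^2]\ge 0$, where throughout I write $X_t:=\boldsymbol{U}_t^{\top}\boldsymbol{Y}_t$. Applying Itô's formula to the entries of $\boldsymbol{Y}_t\boldsymbol{Y}_t^{\top}$ via the DO equation \eqref{eq:DLR-eq-Y}, taking expectations (the stochastic-integral part is a genuine martingale by $\boldsymbol{Y}\in L^2(\Omega;C([0,T];\mathbb{R}^R))$, Assumption~\ref{assump:lin-growth} and \eqref{eq:U-stab}), and contracting with $v$ on both sides — exactly as in the derivation of \eqref{eq:covariance} — one finds that $f$ is absolutely continuous on $[0,T]$ with
\begin{equation*}
f'(t)=2\,\mathbb{E}\bigl[(\boldsymbol{Y}_t^{\top}v)\,(v^{\top}\boldsymbol{U}_t a(t,X_t))\bigr]+\mathbb{E}\bigl[v^{\top}\boldsymbol{U}_t b(t,X_t)b(t,X_t)^{\top}\boldsymbol{U}_t^{\top}v\bigr]
\end{equation*}
for a.e.\ $t\in[0,T]$.

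Next I estimate the two terms. For the diffusion term, \eqref{eq: sigma_B} gives $b(t,x)b(t,x)^{\top}\succeq\sigma_B I_{d\times d}$ pointwise, so $v^{\top}\boldsymbol{U}_t b\,b^{\top}\boldsymbol{U}_t^{\top}v\ge\sigma_B|\boldsymbol{U}_t^{\top}v|^2=\sigma_B$, where I used $\boldsymbol{U}_t\boldsymbol{U}_t^{\top}=I_{R\times R}$ (valid by \eqref{eq:U-stab}) and hence $|\boldsymbol{U}_t^{\top}v|=|v|=1$. For the drift term, Cauchy–Schwarz in $L^2(\Omega)$ together with $|\boldsymbol{U}_t^{\top}v|=1$ and Assumption~\ref{assump:lin-growth} give
\begin{equation*}
\bigl|2\,\mathbb{E}[(\boldsymbol{Y}_t^{\top}v)(v^{\top}\boldsymbol{U}_t a(t,X_t))]\bigr|\le 2\sqrt{\mathbb{E}[|a(t,X_t)|^2]}\,\sqrt{f(t)}\le 2\sqrt{C_{\mathrm{lgb}}(1+M)}\,\sqrt{f(t)},
\end{equation*}
using $\mathbb{E}[|X_t|^2]=\mathbb{E}[|\boldsymbol{Y}_t|^2]\le M$ from \eqref{eq:Y-stab} (again $\boldsymbol{U}_t\boldsymbol{U}_t^{\top}=I_{R\times R}$). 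Writing $\kappa:=\sqrt{C_{\mathrm{lgb}}(1+M)}$, this yields the differential inequality $f'(t)\ge -2\kappa\sqrt{f(t)}+\sigma_B$ for a.e.\ $t\in[0,T]$.

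Finally I run a barrier argument on this inequality. The value $u^{\ast}:=\sigma_B^2/(4\kappa^2)=\sigma_B^2/\bigl(4C_{\mathrm{lgb}}(1+M)\bigr)$ satisfies $-2\kappa\sqrt{u^{\ast}}+\sigma_B=0$, and $u\mapsto-2\kappa\sqrt{u}+\sigma_B$ is strictly positive for $0\le u<u^{\ast}$. Hence $f'(t)>0$ whenever $f(t)<u^{\ast}$. Combined with the continuity of $f$ and $f(0)=v^{\top}C_{\boldsymbol{Y}_0}v\ge\sigma_{\boldsymbol{Y}_0}$ (from \eqref{eq: sigma_0}), a first-crossing argument gives $f(t)\ge m:=\min\{\sigma_{\boldsymbol{Y}_0},u^{\ast}\}$ for all $t\in[0,T]$: if $f(t_1)<m$ for some $t_1$, then, since $f(0)\ge m$, by continuity $t_0:=\sup\{t\le t_1: f(t)\ge m\}$ satisfies $f(t_0)=m\le u^{\ast}$ and $f<m\le u^{\ast}$ on $(t_0,t_1]$, so $f'>0$ a.e.\ there and $f(t_1)\ge f(t_0)=m$, a contradiction. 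Since $v$ was an arbitrary unit vector, $C_{\boldsymbol{Y}_t}\succeq\min\{\sigma_{\boldsymbol{Y}_0},\,\sigma_B^2/(4C_{\mathrm{lgb}}(1+M))\}\,I_{R\times R}$ for all $t\in[0,T]$.

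The only real obstacle is technical: justifying the differentiation under the expectation and the absolute continuity of $t\mapsto C_{\boldsymbol{Y}_t}$ — that is, that the stochastic-integral part of $\mathrm{d}(\boldsymbol{Y}_t\boldsymbol{Y}_t^{\top})$ is a true martingale and that the resulting drift is locally integrable in time — which follows from the a priori bounds of Lemma~\ref{lem:stab} and the progressive measurability already exploited in Section~\ref{sec: Consistency}; once the inequality $f'\ge -2\kappa\sqrt{f}+\sigma_B$ is available, the comparison step is elementary. One may additionally note that $f(t)>0$ for every $t$ since $\boldsymbol{Y}_t$ has linearly independent components, so $\sqrt{f(t)}$ is harmless, although the barrier argument does not need this.
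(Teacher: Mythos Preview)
Your proof is correct and follows the same overall architecture as the paper's: derive an ODE for the Rayleigh quotient $f(t)=v^\top C_{\boldsymbol{Y}_t}v$ via It\^o's formula, bound the diffusion contribution below by $\sigma_B$ using $\boldsymbol{U}_t\boldsymbol{U}_t^\top=I$, control the drift contribution, and conclude by a comparison argument.

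The one genuine difference is in the drift estimate and the resulting differential inequality. The paper uses Young's inequality with a free parameter,
\[
|v^{\top}\mathbb{E}[\boldsymbol{U}_t a_t\,\boldsymbol{Y}_t^{\top}]v|\le \tfrac{1}{2\varepsilon}A_t+\tfrac{\varepsilon}{2}C_{\mathrm{lgb}}(1+M),
\]
and then chooses $\varepsilon=\sigma_B/(2C_{\mathrm{lgb}}(1+M))$ to obtain the \emph{linear} inequality $A_t'\ge -\varepsilon^{-1}A_t+\sigma_B/2$, which it integrates explicitly. You instead apply Cauchy--Schwarz directly to get the sharper \emph{square-root} inequality $f'\ge -2\kappa\sqrt{f}+\sigma_B$, and then run a barrier/first-crossing argument at the equilibrium $u^\ast=\sigma_B^2/(4\kappa^2)$. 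Both routes land on exactly the same lower bound $\min\{\sigma_{\boldsymbol{Y}_0},\,\sigma_B^2/(4C_{\mathrm{lgb}}(1+M))\}$; the paper's choice of $\varepsilon$ is precisely the one that makes Young's inequality match Cauchy--Schwarz at the equilibrium level. Your version trades an explicit ODE solution for a qualitative comparison step, which is arguably cleaner but slightly less quantitative (no transient estimate). Either way, the argument is sound.
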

\begin{proof}
By introducing the shorthand notation $a_t = a(t, X_t) \in \mathbb{R}^d$ and $b_t = b(t, X_t) \in \mathbb{R}^{d \times m}$, the $k$-th component ${Y}_{t}^{k}$ of $\boldsymbol{Y}_t\in [L^2(\Omega)]^R$ satisfies
\begin{equation*}
d{Y}_{t}^{k} = \sum\limits_{i=1}^{d} U^{ki}_t a^{i}_t dt + \sum\limits_{l = 1}^{m}\sum\limits_{r=1}^{d} {U}_{t}^{kr}b_t^{rl} dW_t^{l} 
\end{equation*}
Hence, 
using Itô's formula for $1\leq j,k \leq R$, we have 
\begin{equation*}
\begin{aligned}
	d({Y}_{t}^{j}{Y}_{t}^{k})  = & d({Y}_{t}^{j}){Y}_{t}^{k} +  {Y}_{t}^{j}d({Y}_{t}^{k}) + \sum_{l=1}^{m} \sum_{i=1}^{d} {U}_{t}^{ji}b_t^{il} \sum_{r=1}^{d} {U}_{t}^{kr}b_t^{rl} dt \\
	 = & (\sum\limits_{i=1}^{d} U^{ji}_t a^{i}_t dt + \sum\limits_{l = 1}^{m}\sum\limits_{i=1}^{d} {U}_{t}^{ji}b_t^{il} dW_t^{l}){Y}_{t}^{k} \\
	& +  {Y}_{t}^{j}(\sum\limits_{i=1}^{d} U^{ki}_t a^{i}_t dt + \sum\limits_{l = 1}^{m}\sum\limits_{i=1}^{d} {U}_{t}^{kr}b_t^{rl} dW_t^{l}) \\
	& + \sum_{l=1}^{m} \sum_{i=1}^{d} {U}_{t}^{ji}b_t^{il} \sum_{r=1}^{d} {U}_{t}^{kr}b_t^{rl} dt
\end{aligned}
\end{equation*}
Hence, 
\begin{equation*}
d(\boldsymbol{Y}_{t}\boldsymbol{Y}_{t}^{\top}) = (d \boldsymbol{Y}_{t})\boldsymbol{Y}_{t}^{\top} + \boldsymbol{Y}_{t} (d \boldsymbol{Y}_{t})^{\top} + \boldsymbol{U}_{t} b_t (\boldsymbol{U}_{t} b_t)^{\top} dt
\end{equation*}
and taking the expectation of both sides yields
\begin{equation*}
\frac{d\mathbb{E}[(\boldsymbol{Y}_{t}\boldsymbol{Y}_{t}^{\top})]}{dt} = \mathbb{E}[ \boldsymbol{U}_{t}a_t(\boldsymbol{Y}_{t}^{\top})] + \mathbb{E}[\boldsymbol{Y}_{t} (\boldsymbol{U}_{t}a_t)^{\top}] + \mathbb{E}[\boldsymbol{U}_{t} b_t b_t^{\top} \boldsymbol{U}_{t}^{\top}].
\end{equation*}
We now aim at analyzing the smallest eigenvalue of $C_{\boldsymbol{Y}_{t}}:=\mathbb{E}[(\boldsymbol{Y}_{t}\boldsymbol{Y}_{t}^{\top})]$ through the Rayleigh quotient. 
For any unit vector $v\in \mathbb{R}^{R}$, we have
\begin{equation}\label{rayleigh quot continuous}
v^{\top}\frac{d\mathbb{E}[(\boldsymbol{Y}_{t}\boldsymbol{Y}_{t}^{\top})]}{dt}v = 2 v^{\top}\mathbb{E}[ \boldsymbol{U}_{t}a_t(\boldsymbol{Y}_{t}^{\top})]v  + v^{\top}\mathbb{E}[\boldsymbol{U}_{t} b_t b_t^{\top} \boldsymbol{U}_{t}^{\top}]v.
\end{equation}
Thanks to \eqref{eq: sigma_B},  the last term can be bounded below as
\begin{equation*}
v^{\top}\mathbb{E}[\boldsymbol{U}_{t} b_t b_t^{\top} \boldsymbol{U}_{t}^{\top}]v = v^{\top}\boldsymbol{U}_{t}\mathbb{E}[ b_t b_t^{\top}] \boldsymbol{U}_{t}^{\top}v \geq v^{\top}\boldsymbol{U}_{t}\sigma_B I_{R \times R} \boldsymbol{U}_{t}^{\top}v \geq \sigma_B.
\end{equation*}   
The first term can be bounded above as
\begin{equation*}
\begin{aligned}
	|v^{\top}\mathbb{E}[ \boldsymbol{Y}_{t} (\boldsymbol{U}_{t}a_t)^{\top}]v |  & = |\mathbb{E}[ v^{\top}\boldsymbol{Y}_{t} a_t^{\top}] \boldsymbol{U}_{t}^{\top}v | \leq |\mathbb{E}[ v^{\top}\boldsymbol{Y}_{t} a_t^{\top}]| \\
	& \leq  \frac{1}{2 \varepsilon}\mathbb{E}[|v^{\top}\boldsymbol{Y}_{t}|^2]+ \frac{\varepsilon}{2}\mathbb{E}[|a_t|^2] \\
	& \leq  \frac{1}{2 \varepsilon}\mathbb{E}[v^{\top}\boldsymbol{Y}_{t}\boldsymbol{Y}_{t}^{\top}v]+ \frac{\varepsilon}{2}C_{\mathrm{lgb}}(1+\mathbb{E}[|\boldsymbol{Y}_{t}|^2] ) \\
	& \leq  \frac{1}{2 \varepsilon}v^{\top}\mathbb{E}[\boldsymbol{Y}_{t}\boldsymbol{Y}_{t}^{\top}]v+ \frac{\varepsilon}{2}C_{\mathrm{lgb}}(1+M )\ \text{ for any } \varepsilon >0.
\end{aligned}
\end{equation*}   
Taking $\varepsilon = \frac{\sigma_{B}}{2C_{\mathrm{lgb}}(1+M)}$ leads to the following estimate on the derivative of $A_t := v^{\top}C_{\boldsymbol{Y}_{t}}v = v^{\top}\mathbb{E}[(\boldsymbol{Y}_{t}\boldsymbol{Y}_{t}^{\top})]v$:

\begin{equation}\label{eq:A_t}
\frac{d}{dt}A_t \geq -\frac{1}{ \varepsilon}A_t + \frac{\sigma_B}{2},
\end{equation}
from which we deduce
\begin{equation*}
A_t \geq \frac{\varepsilon\sigma_B}{2} (e^{\frac{t}{ \varepsilon}} -1) e^{-\frac{t}{ \varepsilon}} + A_0 e^{-\frac{t}{\varepsilon}}.
\end{equation*}
Noting that \eqref{eq: sigma_0} implies $A_0 \geq \sigma_{\boldsymbol{Y}_{0}}$, we conclude
\begin{equation*}
\begin{aligned}
	v^{\top}\mathbb{E}[\boldsymbol{Y}_{t}\boldsymbol{Y}_{t}^{\top}]v & \geq \frac{\varepsilon\sigma_B}{2} + (\sigma_{\boldsymbol{Y}_{0}} - \frac{\varepsilon\sigma_B}{2}) e^{-\frac{t}{ \varepsilon}} \\
	& \geq \min\{ \sigma_{\boldsymbol{Y}_{0}}, \frac{\sigma^2_{B}}{4C_{\mathrm{lgb}}(1+M)} \} > 0 \quad \ \text{ for any }\ t \in [0,T].
\end{aligned}
\end{equation*}   
\end{proof}
As a consequence of the proposition above, the following global existence result is obtained.
\begin{thm}[Global Existence of DO solution]\label{thm:global existence of DLR}
Let Assumptions 1--3 hold. Suppose that the assumptions of Proposition~\ref{prop:covariance full-rank} hold. Then, the DO solution exists for all $t \geq 0$.
\end{thm}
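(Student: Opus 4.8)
The plan is to deduce global existence by contradiction, combining the explosion-time characterisation of Proposition~\ref{prop: max T_e} with the uniform non-degeneracy of the Gram matrix established in Proposition~\ref{prop:covariance full-rank}. First I would invoke Theorem~\ref{thm:local-existence} together with the extension procedure of Section~\ref{sec:blow-up} to obtain a strong DO solution $(\boldsymbol{U},\boldsymbol{Y})$ on the maximal interval $[0,T_e)$, where $T_e=\lim_{n\to\infty}\tau_n$. Assume, towards a contradiction, that $T_e<\infty$.

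For each fixed $T<T_e$ the DO solution exists on $[0,T]$ and, by Definition~\ref{def: D0 sol}, the components of $\boldsymbol{Y}_t$ are linearly independent for all $t\in[0,T]$, so $\boldsymbol{Y}_t$ has rank $R$ there. Hence Proposition~\ref{prop:covariance full-rank} applies on $[0,T]$ and yields
\[
C_{\boldsymbol{Y}_{t}}\succeq \min\Bigl\{\sigma_{\boldsymbol{Y}_{0}},\,\tfrac{\sigma_{B}^{2}}{4C_{\mathrm{lgb}}(1+M(T))}\Bigr\}\,I_{R\times R},\qquad t\in[0,T],
\]
where $M(T)$ is the explicit, continuous, nondecreasing function of the horizon coming from Lemma~\ref{lem:stab}. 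Since $T_e<\infty$, the quantity $M(T_e):=\lim_{T\uparrow T_e}M(T)$ is finite, and the monotonicity of $M$ gives a single positive constant
\[
c:=\min\Bigl\{\sigma_{\boldsymbol{Y}_{0}},\,\tfrac{\sigma_{B}^{2}}{4C_{\mathrm{lgb}}(1+M(T_e))}\Bigr\}>0 ,\qquad C_{\boldsymbol{Y}_{t}}\succeq c\,I_{R\times R}\ \text{ for all } t\in[0,T_e).
\]

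From this uniform spectral bound I would conclude that every eigenvalue of $C_{\boldsymbol{Y}_{t}}^{-1}$ is at most $1/c$, whence $\|C_{\boldsymbol{Y}_{t}}^{-1}\|_{\mathrm{F}}\le\sqrt{R}\,\|C_{\boldsymbol{Y}_{t}}^{-1}\|_{2}\le\sqrt{R}/c$ for all $t\in[0,T_e)$. In particular $\limsup_{t\uparrow T_e}\|C_{\boldsymbol{Y}_{t}}^{-1}\|_{\mathrm{F}}<\infty$, which contradicts Proposition~\ref{prop: max T_e}, according to which $T_e<\infty$ forces $\lim_{t\uparrow T_e}\|C_{\boldsymbol{Y}_{t}}^{-1}\|_{\mathrm{F}}=\infty$. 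Therefore $T_e=+\infty$, i.e., the rank-$R$ DO solution exists for all $t\ge0$; equivalently, rerunning the local existence/extension argument on every compact $[0,T]$ using that the same $\delta(n)$ works indefinitely gives the solution on $[0,\infty)$.

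The only delicate point is making sure the lower bound on $C_{\boldsymbol{Y}_t}$ does not degenerate as $t\uparrow T_e$; this is immediate here because the dependence on the time horizon enters the bound of Proposition~\ref{prop:covariance full-rank} solely through $M(T)$, which is an explicit continuous function and hence remains bounded on the compact interval $[0,T_e]$ whenever $T_e<\infty$. Everything else — the passage from the smallest-eigenvalue bound to $\|C_{\boldsymbol{Y}_t}^{-1}\|_{\mathrm{F}}$, and the verification that the hypotheses of Proposition~\ref{prop:covariance full-rank} (namely \eqref{eq: sigma_0} and the uniform-in-$(t,x)$ bound \eqref{eq: sigma_B}) are exactly those assumed here — is routine.
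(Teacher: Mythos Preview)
Your argument is correct and follows essentially the same route as the paper: assume $T_e<\infty$, invoke Proposition~\ref{prop:covariance full-rank} (with the observation that $M(T)\le M(T_e)<\infty$ for all $T<T_e$) to obtain a uniform lower bound on the smallest eigenvalue of $C_{\boldsymbol{Y}_t}$, hence a uniform upper bound on $\|C_{\boldsymbol{Y}_t}^{-1}\|_{\mathrm{F}}$, and then contradict Proposition~\ref{prop: max T_e}. Your treatment is in fact slightly more explicit than the paper's in justifying why the bound does not degenerate as $T\uparrow T_e$ and in passing from the spectral bound to the Frobenius-norm bound.
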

\begin{proof}
From Theorems \ref{thm:local-existence} and \ref{thm: uniqueness of DO}, the rank $R$ solution uniquely exists up to a certain time $T>0$. 
Denote by $T_e$ its explosion time and suppose, to obtain a contradiction, $T_e < +\infty$. 
Then, from Proposition~\ref{prop: max T_e} we have $\lim\limits_{t \uparrow T_e} \|C^{-1}_{\boldsymbol{Y}_t}\|_{\mathrm{F}} = +\infty$. 
But by Proposition \ref{prop:covariance full-rank} the Rayleigh quotient $A_t = v^{\top}C_{\boldsymbol{Y}_{t}}v$ with $v\in \mathbb{R}^{R}$ satisfies
\begin{equation*}
A_t \geq \min\{ \sigma_{\boldsymbol{Y}_{0}}; \frac{\sigma^2_{B}}{4C_{\mathrm{lgb}}(1+M(T_e))} \} > 0,
\end{equation*}
and, hence, for some constant $\bar{k} > 0$ we have $\|C^{-1}_{\boldsymbol{Y}_t}\|_{\mathrm{F}} \leq \bar{k}$ for all $ t \in [0, T_e)$. This contradicts $\lim\limits_{t \uparrow T_e} \|C^{-1}_{\boldsymbol{Y}_t}\|_{\mathrm{F}} = +\infty$. Therefore, $T_e = +\infty$.
\end{proof}
\begin{Remark}
It is worth
noticing that \eqref{eq: sigma_B} is satisfied in the case of additive non-degenerate noise.
\end{Remark}

\section{Conclusion}\label{sec:conclusion}
In this work, we achieved to set a rigorous DO setting for SDEs under the conditions that the studied drift and diffusion satisfy a Lipschitz condition and a linear-growth bound. First, we (re-)derived the equations which characterize the evolution of the deterministic and stochastic modes, in a DO formulation, and showed how these can be re-interpreted as a projected dynamics leading to a DLRA formulation. Our derivation makes use of the It\^o's formula and avoids the direct use of time derivatives. We proved local-existence and uniqueness of the DO formulation and analyzed the possibility of extending the solution up to and beyond the explosion time. Finally, we gave a sufficient condition that assures the global existence of the DLR approximation.

One natural development of this work would be to extend this DO framework and well-posedness results to accommodate weaker assumptions on the drift and diffusion (e.g. local lipschitzianity and/or weak monotonicity).

Furthermore, it would be interesting to build a DLR formulation as a fully projecting dynamics as in \eqref{eq:eq-manifold2} by giving a rigorous meaning to the term  $\mathscr{P}_{\mathcal{Y}(\mathcal{X}_t)}[b(t,\mathcal{X}_{t})\,dW_{t}]$, possibly in a distributional sense. In case one manages to achieve this goal, the direct connection to the standard DLRA formulation for deterministic or random equations would allow us to apply numerical projector splitting schemes, which have been shown to perform very well in those contexts \cite{ceruti2022unconventional,lubich2014projector}.

\section*{Acknowledgements}
Yoshihito Kazashi acknowledges the financial support of the University of Strathclyde through a Faculty of Science Starter Grant. This work has also been supported by the Swiss National Science Foundation under the
Project n. 200518 “Dynamical low rank methods for uncertainty quantification and data assimilation”.

\appendix
\section{Appendix}
For a Hilbert space $(H,\langle\cdot,\cdot\rangle)$,
denote by $[H]^{R}$ the product Hilbert space equipped with the
norm $\|\boldsymbol{Y}\|_{[H]^{R}}=\sqrt{\sum_{j=1}^{R}\|Y^{j}\|_{H}^{2}}$
for $\boldsymbol{Y}=(Y^{j})\in[H]^{R}$. For $\boldsymbol{Y}\in[H]^{R}$
let $Z_{\boldsymbol{Y}}$ be the Gram matrix 
\[
Z_{\boldsymbol{Y}}:=\left(\langle{Y}^{j},{Y}^{k}\rangle\right)_{j,k=1,\dots R}\in\mathbb{R}^{R\times R}.
\]
If $Z_{\boldsymbol{Y}_0}$ is invertible, then for $\boldsymbol{Y}$ close to $\boldsymbol{Y}_0$, $Z_{\boldsymbol{Y}}$ is also invertible. The following result makes this intuition precise. 
\begin{prop} \label{prop: A inverse}
Suppose that $\boldsymbol{Y}_{0}\in[H]^{R}$ has linearly independent
components $Y^{j}_0$, $j=1,\dots,R$ in $H$, and that $\|\boldsymbol{Y}_{0}\|_{H}\leq\rho$
and $\|Z_{\boldsymbol{Y}_{0}}^{-1}\|_{\mathrm{F}}\leq\gamma$ hold
for $\rho,\gamma>0$. Then, there exists $\eta:=\eta(\rho,\gamma)>0$ {defined as
\begin{equation}
	\eta(\rho,\gamma):=-\rho+\sqrt{\rho^{2}+\frac{1}{2\gamma}}\label{eq:def-eta}
\end{equation}
}such that we have 
\[
\|Z_{\boldsymbol{Y}}^{-1}\|_{\mathrm{F}}\leq2\gamma,\quad\text{for any }\boldsymbol{Y}\in B_{\eta}(\boldsymbol{Y}_{0}),
\]
and $\eta(\rho,\gamma)$ is decreasing in both $\rho$ and $\gamma$.
Here, $B_{\eta}(\boldsymbol{Y}_{0})$ is the open ball in $[H]^{R}$
of radius $\eta$ around $\boldsymbol{Y}_{0}$. \end{prop} 
\begin{proof}
Any $\boldsymbol{Y}\in B_{\eta}(\boldsymbol{Y}_{0})$ may be written
as $\boldsymbol{Y}=\boldsymbol{Y}_{0}+r\boldsymbol{\delta}$ with
$r<\eta$ and $\|\boldsymbol{\delta}\|_{[H]^{R}}=1$. We will derive
an upper bound $\eta$ on $r$ that guarantees $\|Z_{\boldsymbol{Y}}^{-1}\|_{\mathrm{F}}\leq2\gamma$. 

Define $\boldsymbol{Y}(s):=\boldsymbol{Y}_{0}+s\boldsymbol{\delta}$
and let $f(s):=Z_{\boldsymbol{Y}(s)}^{-1}$. Then, we have 
\[
\frac{\mathrm{d}}{\mathrm{d}s}f(s)=-f(s)\left(\underbrace{(\langle\delta^{j},{Y(s)}^{k}\rangle)_{j,k=1,\dots R}}_{:=Z_{\boldsymbol{\delta},\boldsymbol{Y}(s)}}+\underbrace{(\langle{Y(s)}^{j},\delta^{k}\rangle)_{j,k=1,\dots R}}_{:=Z_{\boldsymbol{Y}(s),\boldsymbol{\delta}}}\right)f(s).
\]
Therefore, it holds 
\[
\begin{aligned}\frac{\mathrm{d}}{\mathrm{d}s}\|f(s)\|_{\mathrm{F}}^{2}  =&\frac{\mathrm{d}}{\mathrm{d}s}\mathrm{Tr}(f(s)f^{\top}(s))\\
 =&-\mathrm{Tr}\bigl(f(s)(Z_{\boldsymbol{\delta},\boldsymbol{Y}(s)}+Z_{\boldsymbol{Y}(s),\boldsymbol{\delta}})f(s)f^{\top}(s)\bigr)-\mathrm{{Tr}}\bigl(f(s)f^{\top}(s)(Z_{\boldsymbol{Y}(s),\boldsymbol{\delta}}+Z_{\boldsymbol{\delta},\boldsymbol{Y}(s)})f^{\top}(s)\bigr)\\
 \leq&2\|Z_{\boldsymbol{\delta},\boldsymbol{Y}(s)}+Z_{\boldsymbol{Y}(s),\boldsymbol{\delta}}\|_{\mathrm{F}}\|f^{3}(s)\|_{\mathrm{F}}\\
 \leq&2(\|Z_{\boldsymbol{\delta},\boldsymbol{Y}(s)}\|_{\mathrm{F}}+\|Z_{\boldsymbol{Y}(s),\boldsymbol{\delta}}\|_{\mathrm{F}})\|f^{3}(s)\|_{\mathrm{F}}\\
 \leq&4(\|\boldsymbol{\delta}\|_{[H]^{R}}\|\boldsymbol{Y}(s)\|_{[H]^{R}})\|f(s)\|_{\mathrm{F}}^{3}\\
 \leq&4(\rho+s)\|f(s)\|_{\mathrm{F}}^{3}.
\end{aligned}
\]
Let $E_{s}=\|f(s)\|_{\mathrm{F}}^{2}$ so that $E_{s}^{-\frac{3}{2}}\frac{\mathrm{d}E_{s}}{\mathrm{d}s}\leq4(\rho+s)$
and
\[
\int_{0}^{r}2\frac{\mathrm{d}(-E_{s}^{-1/2})}{\mathrm{d}s}\mathrm{d}s=\int_{0}^{r}E_{s}^{-\frac{3}{2}}\frac{\mathrm{d}E_{s}}{\mathrm{d}s}\mathrm{d}s\leq4\rho r+2r^{2},
\]
which implies $-2E_{r}^{-\frac{1}{2}}+2E_{0}^{-\frac{1}{2}}\leq4\rho r+2r^{2}$.
Now we use $E_{0}\leq\gamma^{2}$ to obtain the bound $\gamma^{-1}-2\rho r-r^{2}\leq E_{r}^{-\frac{1}{2}}$.
For any $0<r<-\rho+\sqrt{\rho^{2}+\frac{1}{\gamma}}$, we have $\frac{1}{\gamma}-2\rho r-r^{2}>0$
and thus this bound yields
\[
\|Z_{\boldsymbol{Y}_{r}}^{-1}\|_{\mathrm{F}}\leq\frac{1}{\frac{1}{\gamma}-2\rho r-r^{2}}=\frac{1}{\frac{1}{\gamma}+\rho^{2}-(\rho+r)^{2}}.
\]
Then, $\|Z_{\boldsymbol{Y}_{r}}^{-1}\|_{\mathrm{F}}^{2}\leq4\gamma^{2}$
is guaranteed by the condition $0<r\leq-\rho+\sqrt{\rho^{2}+\frac{1}{2\gamma}}$,
which also implies $\frac{1}{\gamma}-2\rho r-r^{2}>0$ above. Finally, {the function $\eta(\rho,\gamma)$, defined as in \eqref{eq:def-eta}}, is decreasing
in $\rho$ and $\gamma$:
\[
\begin{aligned}\frac{\partial\eta}{\partial\rho} & =\frac{\rho}{\sqrt{\rho^{2}+\frac{1}{2\gamma}}}-1<0,\quad\forall\rho,\gamma>0;\\
\frac{\partial\eta}{\partial\gamma} & =-\frac{1}{4\gamma^{2}}\frac{1}{\sqrt{\rho^{2}+\frac{1}{2\gamma}}}<0,\quad\forall\rho,\gamma>0.
\end{aligned}
\]	
\end{proof}

{Recall that a finite rank function $X \in L^2\left(\Omega; H\right)$ admits a representation reminiscent of the singular value decomposition as in \eqref{eq:A1} below; see \cite[Lemma 2.1]{kazashi2021existence}, as well as Lemma \ref{lem:SVD-finite-rank-op} and Remark 
	\ref{rem: rank of span X}.}
{For this decomposition, in Proposition \ref{prop: lip of the proj} we show that projector-valued mappings, where the projection is onto the space spanned by the spatial basis of this decomposition, are Lipschitz continuous. In order to achieve this goal, we need the following Lemma \ref{lem: PuPu^}. The proofs of Lemma \ref{lem: PuPu^} and Proposition \ref{prop: lip of the proj} follow closely \cite[Proof of Lemma A.2]{bachmayr2021existence}; see also \cite[Lemmata 4.1 and 4.2]{wei2016guarantees} for a finite dimensional version. However, unlike \cite[Lemma A.2]{bachmayr2021existence} we do not assume $L^2(\Omega;H)$ to be separable.} \begin{lem}\label{lem: PuPu^}
{	Consider $X, \hat{X} \in L^2(\Omega; H)$ that have the following representations
	\begin{equation}\label{eq:A1}
		X = \sum_{j=1}^{R} \sigma_j U_j V_j, \quad \hat{X} = \sum_{j=1}^{R} \hat{\sigma}_j \hat{U}_j \hat{V}_j,
	\end{equation}
	with $\sigma_j,\hat{\sigma}_j>0$, and $ U_j, \hat{U}_j \in H$, $V_j, \hat{V}_j \in L^2(\Omega; \mathbb{R})$ all orthonormal in their respective Hilbert spaces, for $j=1,\dots,R$.
	Here, $\sigma_j$ and $\hat{\sigma}_j$, $j=1,\dots,R$, are ordered in the descending order. 
	Define the projections $P_{U} = \sum_{j=1}^{R} \langle U_j, \cdot \rangle_{H} U_j : L^{2}(\Omega;H) \to L^{2}(\Omega;H)$, $P_{V} = \sum_{j=1}^{R} \mathbb{E}[V_j\,\cdot\,] V_j : L^2(\Omega; H) \to L^2(\Omega; H)$, $P_{U^{\perp}}:=\mathrm{id}-P_{U}$, and $P_{V^{\perp}}:=\mathrm{id}-P_{V}$; and similarly for $P_{\hat{U}}$, $P_{\hat{U}^{\perp}}$,
	$P_{\hat{V}}$, and $P_{\hat{V}^{\perp}}$.
	Then, we have
	\begin{equation}\label{eq: P_UP_u^}
		\begin{aligned}
		\|P_{U}-P_{\hat{U}}\|_{L^{2}(\Omega;H)\to L^{2}(\Omega;H)}&=\|P_{\hat{U}^{\perp}}P_{U}\|_{L^{2}(\Omega;H)\to L^{2}(\Omega;H)}=\|P_{\hat{U}}P_{U^{\perp}}\|_{L^{2}(\Omega;H)\to L^{2}(\Omega;H)}; \\
	\|P_{V}-P_{\hat{V}}\|_{L^{2}(\Omega;H)\to L^{2}(\Omega;H)}&=\|P_{\hat{V}^{\perp}}P_{V}\|_{L^{2}(\Omega;H)\to L^{2}(\Omega;H)}=\|P_{\hat{V}}P_{V^{\perp}}\|_{L^{2}(\Omega;H)\to L^{2}(\Omega;H)}.
 \end{aligned}
\end{equation}}
\end{lem}
\begin{proof}
{	The mappings $P_{U}$ and $P_{V}$
	are orthogonal projections. Indeed, 
	the idempotency of $P_U$ and $P_V$ can be easily checked and the $L^2(\Omega;H)$-orthogonality of $P_U$ follows from its $H$-orthogonality. 
	To show the orthogonality of $P_V$, first consider $z,w\in L^2(\Omega;H)$. Noting that $\langle\mathbb{E}[V_{k}w],\cdot\rangle_{H}\colon H\to H$ is a continuous linear functional on $H$
	for $k=1,\dots,R$, from \cite[Theorem 8.13]{Leoni.G_2017_book_Sobolev_2nd} we have:
		\begin{equation*}
		\begin{aligned}
			\langle z, P_V w \rangle_{L^2(\Omega; H)}	& = \mathbb{E}[\langle z, \sum_{k=1}^{R} \mathbb{E}[ w V_k ] V_k \rangle_H]  = \sum_{k=1}^{R} \mathbb{E}[\langle z, \mathbb{E}[ w V_k ] V_k \rangle_H] \\
& =\sum_{k=1}^{R} \langle \mathbb{E}[z V_k], \mathbb{E}[ w V_k ]  \rangle_H 
				 =
			\sum_{k=1}^{R}  \mathbb{E}[ \langle \mathbb{E}[z V_k],w V_k  \rangle_H ] = 	\langle P_V z, w \rangle_{L^2(\Omega; H)}. \\
					\end{aligned}
	\end{equation*}
Similarly, $P_{\hat{U}}$ and $P_{\hat{V}}$ are orthogonal projections. }

{Let us prove the second identity in \eqref{eq: P_UP_u^}. For $f\in L^{2}(\Omega;H)$, we have
	\begin{equation}\label{eq: Pv - Pv^}
		(P_{V}-P_{\hat{V}})f=(P_{\hat{V}}+P_{\hat{V}^{\perp}})(P_{V}-P_{\hat{V}})(P_{V}+P_{V^{\perp}})f=(P_{\hat{V}^{\perp}}P_{V}-P_{\hat{V}}P_{V^{\perp}})f,
	\end{equation}
	As $P_{\hat{V}}$ is an orthogonal projector, $\mathbb{E}[\langle P_{\hat{V}^{\perp}}f,P_{\hat{V}}g\rangle_{H}]=0$  for any $f,g \in L^{2}(\Omega;H)$. 
	Hence, together with \eqref{eq: Pv - Pv^}, for all $f\in L^{2}(\Omega;H)$ one has
	\begin{equation}\label{eq: proj f}
	\|(P_{V}-P_{\hat{V}})f\|_{L^{2}(\Omega;H)}^{2}=\|P_{\hat{V}^{\perp}}P_{V}f\|_{L^{2}(\Omega;H)}^{2}+\|P_{\hat{V}}P_{V^{\perp}}f\|_{L^{2}(\Omega;H)}^{2}.
	\end{equation}
	Moreover, for any $f \in L^{2}(\Omega;H)$ it also holds
	\begin{equation}\label{eq: f de 2}
		\|f\|_{L^{2}(\Omega;H)}^{2}=\|P_{V}f\|_{L^{2}(\Omega;H)}^{2}+\|P_{V^{\perp}}f\|_{L^{2}(\Omega;H)}^{2},
	\end{equation}
	and 
	\begin{equation}\label{eq: f de 1}
	f=\|P_{V}f\|_{L^{2}(\Omega;H)}\tilde{f}_{1}+\|P_{V^{\perp}}f\|_{L^{2}(\Omega;H)}\tilde{f}_{2}
	\end{equation}
	with unit vectors $\tilde{f}_{1}:=P_{V}f/\|P_{V}f\|_{L^{2}(\Omega;H)}$
	and $\tilde{f}_{2}:=P_{V^{\perp}}f/\|P_{V^{\perp}}f\|_{L^{2}(\Omega;H)}$. 
    From \eqref{eq: proj f} , one has
	\begin{align*}
		\|(P_{V}-P_{\hat{V}})f\|_{L^{2}(\Omega;H)}^{2} & =\|P_{V}f\|_{L^{2}(\Omega;H)}^{2}\|P_{\hat{V}^{\perp}}P_{V}\tilde{f}_{1}\|_{L^{2}(\Omega;H)}^{2}+\|P_{V^{\perp}}f\|_{L^{2}(\Omega;H)}^{2}\|P_{\hat{V}}P_{V^{\perp}}\tilde{f}_{2}\|_{L^{2}(\Omega;H)}^{2}\\
		\leq(\|P_{V}f\|_{L^{2}(\Omega;H)}^{2}&  +\|P_{V^{\perp}}f\|_{L^{2}(\Omega;H)}^{2})\max\big\{\|P_{\hat{V}^{\perp}}P_{V}\tilde{f}_{1}\|_{L^{2}(\Omega;H)}^{2},\|P_{\hat{V}}P_{V^{\perp}}\tilde{f}_{2}\|_{L^{2}(\Omega;H)}^{2}\big\}.
	\end{align*}
	Using \eqref{eq: f de 2} and $\|\tilde{f}_{1}\|_{L^{2}(\Omega;H)}=\|\tilde{f}_{2}\|_{L^{2}(\Omega;H)}=1$,
	we obtain
	\[
	\|P_{V}-P_{\hat{V}}\|_{L^{2}(\Omega;H)\to L^{2}(\Omega;H)}\leq\max\big\{\|P_{\hat{V}^{\perp}}P_{V}\|_{L^{2}(\Omega;H)\to L^{2}(\Omega;H)},\|P_{\hat{V}}P_{V^{\perp}}\|_{L^{2}(\Omega;H)\to L^{2}(\Omega;H)}\big\}.
	\]}

{To show the opposite inequality we first notice that 
		\begin{equation*}
			\begin{aligned}
			\|P_{\hat{V}^{\perp}}P_{V}\|^2_{L^{2}(\Omega;H)\to L^{2}(\Omega;H)} &=\sup\limits_{\|f\|_{L^{2}(\Omega;H)}=1}\|P_{\hat{V}^{\perp}}P_{V}f\|_{L^{2}(\Omega;H)}^{2}\\
			&=\sup\limits_{\|f\|_{L^{2}(\Omega;H)}=1,\,P_{V^{\perp}}f=0}\|P_{\hat{V}^{\perp}}P_{V}f\|_{L^{2}(\Omega;H)}^{2}.
		\end{aligned}
		\end{equation*}
		Therefore, from \eqref{eq: proj f} one has
	\begin{align*}
		\|P_{V}-P_{\hat{V}}\|_{L^{2}(\Omega;H)\to L^{2}(\Omega;H)}^{2} & \geq\sup_{\|f\|_{L^{2}(\Omega;H)}=1,\,P_{V^{\perp}}f=0}\|P_{\hat{V}^{\perp}}P_{V}f\|_{L^{2}(\Omega;H)}^{2}\\
		& =\sup_{\|f\|_{L^{2}(\Omega;H)}=1}\|P_{\hat{V}^{\perp}}P_{V}f\|_{L^{2}(\Omega;H)}^{2}=\|P_{\hat{V}^{\perp}}P_{V}\|_{L^{2}(\Omega;H)\to L^{2}(\Omega;H)}^{2}.
	\end{align*}
	Similarly, $\|P_{V}-P_{\hat{V}}\|_{L^{2}(\Omega;H)\to L^{2}(\Omega;H)}\geq\|P_{\hat{V}}P_{V^{\perp}}\|_{L^{2}(\Omega;H)\to L^{2}(\Omega;H)}.$
	Hence,
	\[
	\|P_{V}-P_{\hat{V}}\|_{L^{2}(\Omega;H)\to L^{2}(\Omega;H)}=\max\big\{\|P_{\hat{V}^{\perp}}P_{V}\|_{L^{2}(\Omega;H)\to L^{2}(\Omega;H)},\|P_{\hat{V}}P_{V^{\perp}}\|_{L^{2}(\Omega;H)\to L^{2}(\Omega;H)}\big\}.
	\]}

{Finally, let us prove that $\|P_{\hat{V}^{\perp}}P_{V}\|_{L^{2}(\Omega;H)\to L^{2}(\Omega;H)}=\|P_{\hat{V}}P_{V^{\perp}}\|_{L^{2}(\Omega;H)\to L^{2}(\Omega;H)}$. First define the vector space $H^{R} := [H]^R$ and endow it with the norm $\| z \|_{H^{R}} = \sqrt{\sum_{j=1}^{R} \|z_j\|^2_H}$ for $z=(z_j)_{j=1,\dots, R} \in H^R$. Thanks to the orthogonality of $(V_j)_j$ and the properties of orthogonal projectors, the following chain of equalities holds:
	\begin{equation}\label{eq: long chain}
		\begin{aligned}
			\|P_{\hat{V}^{\perp}}P_{V}\|^2_{L^{2}(\Omega;H)\to L^{2}(\Omega;H)}&= \sup\limits_{\|f\|_{L^{2}(\Omega;H)}=1}\Big\|P_{\hat{V}^{\perp}}\sum_{j=1}^{R}\mathbb{E}[V_{j}f]V_{j}\Big\|^2_{L^{2}(\Omega;H)} \\
			&= \sup\limits_{\|f\|_{L^{2}(\Omega;H)}=1,\,P_{V^{\perp}}f=0}\Big\|P_{\hat{V}^{\perp}}\sum_{j=1}^{R}\mathbb{E}[V_{j}f]V_{j}\Big\|^2_{L^{2}(\Omega;H)} \\
			&=\sup\limits_{x \in
				H^R, \|x\|_{H^{R}} =1}\Big\|P_{\hat{V}^{\perp}}\sum_{j=1}^{R}x_jV_{j}\Big\|^2_{L^{2}(\Omega;H)} \\
			&= \sup\limits_{x \in H^R, \|x\|_{H^{R}} =1} \Big( \Big\|\sum_{j=1}^{R}x_jV_{j}\Big\|^2_{L^{2}(\Omega;H)}  - \Big\|P_{\hat{V}}\Big(\sum_{j=1}^{R}x_jV_{j}\Big)\Big\|^2_{L^{2}(\Omega;H)}\Big)\\
			&= 1 - \inf\limits_{x \in H^R, \|x\|_{H^{R}} =1}  \Big\|\sum_{i=1}^{R}\hat{V}_{i}\mathbb{E}\Big[\hat{V}_{i}\Big(\sum_{j=1}^{R}x_jV_{j}\Big)\Big]\Big\|^2_{L^{2}(\Omega;H)} \\
			&= 1 - \inf\limits_{x \in H^R, \|x\|_{H^{R}} =1}  \sum_{i=1}^{R}\Big\|\sum_{j=1}^{R}x_j\underbrace{\mathbb{E}[\hat{V}_{i}V_{j}]}_{=:(G)_{ij}}\Big\|^2_{H} \\
			& = 1 - \inf\limits_{x \in H^R, \|x\|_{H^{R}} =1 }\left\| Gx\right\|^2_{H^{R}} .
		\end{aligned}
	\end{equation}
    Considering the Stiefel manifold $\mathrm{St}(H;R) = \{ (q_1,\dots, q_R) : q_i \in H, \ \langle q_i, q_j \rangle_H = \delta_{ij}, \ \forall i,j =1,\dots,R\}$ the following equality holds
    \begin{equation}\label{eq: small chain}
    	\begin{aligned}
    	 \inf\limits_{x \in H^R, \|x\|_{H^{R}} =1 }\| Gx\|^2_{H^{R}}& = \inf\limits_{x \in H^R, \|x\|_{H^{R}} =1}  \sum_{i=1}^{R}\Big\|\sum_{j=1}^{R}x_jG_{ij}\Big\|^2_{H}\\
    	 &=\inf\limits_{(q_1,\dots q_R) \in \mathrm{St}(H;R)} \inf\limits_{\substack{b_1,\dots,b_R \in \mathbb{R}^R \\ \sum_{i=1}^{R}\|b_i\|^2_{2} =1 }}\sum_{i=1}^{R}\Big\|\sum_{j=1}^{R}\sum_{k=1}^{R}b^{(k)}_jq_kG_{ij}\Big\|^2_{H}\\
    	 &=\inf\limits_{(q_1,\dots q_R) \in \mathrm{St}(H;R)} \inf\limits_{\substack{b_1,\dots,b_R \in \mathbb{R}^R \\ \sum_{i=1}^{R}\|b_i\|^2_{2} =1 }}\sum_{i=1}^{R}\sum_{j=1}^{R}\sum_{l=1}^{R}\sum_{k=1}^{R}b^{(k)}_lG_{il}G_{ij}b^{(k)}_j\\
    	 &= \inf\limits_{\substack{B \in \mathbb{R}^{R \times R} \\ \|B\|_F =1 }}\sum_{l=1}^{R}\sum_{j=1}^{R}\sum_{k=1}^{R} B_{kl}(G^{\top}G)_{lj}B_{kj}\\
    	 &= \inf\limits_{\substack{B \in \mathbb{R}^{R \times R} \\ \|B\|_F =1 }}\sum_{k=1}^{R} B_{k,:} G^{\top}G B_{k,:}^{\top}\\
    	 &\geq \inf\limits_{\substack{B \in \mathbb{R}^{R \times R} \\ \|B\|_F =1 }}\sum_{k=1}^{R} \lambda_R(G^{\top}G) \sum_{j=1}^R B_{k,j}^{2}= \lambda_R(G^{\top}G),  \\ 	
    	 \end{aligned}
    \end{equation}
where in the fourth line we defined $B= (B_{ij})_{ij} = (b^{(i)}_j)_{ij}\in \mathbb{R}^{R \times R}$, in the second-to-last line $B_{k,:}\in \mathbb{R}^{R}$ indicates the $k$-th row of $B$, and in the last line $\lambda_R(G^{\top}G)$ indicates the $R$-th eigenvalue of $G^{\top}G$. 
The infimum above can be attained  by taking $B \in \mathbb{R}^{R \times R}$ in \eqref{eq: small chain} such that $B^{\top}_{1,:}$ is the unit-norm  eigenvector of $G^{\top}G$ associated with $\lambda_R(G^{\top}G)$, and $B^{\top}_{j,:}= 0 \in\mathbb{R}^R$ for $j >1$. Then, $\inf\limits_{x \in H^R, \|x\|_{H^{R}} =1 }\| Gx\|^2_{H^{R}}=\lambda_R(G^{\top}G)$ independently of  
$(q_1,\dots, q_R) \in \mathrm{St}(H;R)$.  Coming back to \eqref{eq: long chain}, we get
    \begin{equation*}
    	\begin{aligned}
    		\|P_{\hat{V}^{\perp}}P_{V}\|^2_{L^{2}(\Omega;H)\to L^{2}(\Omega;H)}
    		& = 1 - \inf\limits_{x \in H^R, \|x\|_{H^{R}} =1 }\| Gx\|^2_{H^{R}} = 1 - \lambda_R(G^{\top}G)\\
    		& = 1 - \lambda_R(GG^{\top})= 1 - \inf\limits_{b \in \mathbb{R}^R, \|b\|_2=1 }\| G^{\top}b\|^2_2 \\
    		&= \sup\limits_{\|f\|_{L^{2}(\Omega;H)}=1}\|P_{V^{\perp}}P_{\hat{V}}f\|^2_{L^{2}(\Omega;H)} = 		\|P_{V^{\perp}}P_{\hat{V}}\|^2_{L^{2}(\Omega;H)\to L^{2}(\Omega;H)},
    	\end{aligned}
    \end{equation*}
    where in the last line we just follow the same discussion of \eqref{eq: long chain} and \eqref{eq: small chain} in a reverse order. 
Since $P_{V^{\perp}}$ and $P_{\hat{V}}$ are orthogonal projections and thus self-adjoint, we get $\|P_{\hat{V}^{\perp}}P_{V}\|_{L^{2}(\Omega;H)\to L^{2}(\Omega;H)}=\|P_{\hat{V}}P_{V^{\perp}}\|_{L^{2}(\Omega;H)\to L^{2}(\Omega;H)}.$
	The first equality in \eqref{eq:A1} follows similarly.}
\end{proof}
{We now come back to the following Lipschitz continuity of projector-valued mappings of finite rank functions. 
\begin{prop}
	\label{prop: lip of the proj}Suppose that $X,\hat{X}\in L^{2}(\Omega;H)$
	have the representation \eqref{eq:A1}
	with $\sigma_{j},\hat{\sigma}_{j}>0$, and $U_{j},\hat{U}_{j}\in H$,
	$V_{j},\hat{V}_{j}\in L^{2}(\Omega)$ all orthonormal in their respective
	Hilbert spaces, for $j=1,\dots,R$. Here, $\sigma_{j}$ and $\hat{\sigma}_{j}$,
	$j=1,\dots,R$, are ordered in the descending order. Then, the projections
	$P_{U}=\sum_{j=1}^{R}\langle U_{j},\cdot\rangle_{H}U_{j}\colon H\to H$
	and $P_{V}=\sum_{j=1}^{R}\mathbb{E}[V_{j}\,\cdot\,]V_{j}\colon L^{2}(\Omega;H)\to L^{2}(\Omega;H)$
	satisfy 
	\begin{equation}
		\begin{aligned}\|P_{U}-P_{\hat{U}}\|_{H\to H} = \|P_{U}-P_{\hat{U}}\|_{L^{2}(\Omega;H)\to L^{2}(\Omega;H)}&\leq\min \{\frac{1}{\sigma_{R}},\frac{1}{\hat{\sigma}_{R}}\}\|X-\hat{X}\|_{L^{2}(\Omega;H)};\\
			\|P_{V}-P_{\hat{V}}\|_{L^{2}(\Omega;H)\to L^{2}(\Omega;H)}&\leq\min \{\frac{1}{\sigma_{R}},\frac{1}{\hat{\sigma}_{R}}\}\|X-\hat{X}\|_{L^{2}(\Omega;H)}.
		\end{aligned}
		\label{eq: proj U and V}
	\end{equation}
   Moreover, we have 
	\begin{equation}
		\|P_{U}+P_{V}-P_{U}P_{V}-(P_{\hat{U}}+P_{\hat{V}}-P_{\hat{U}}P_{\hat{V}})\|_{L^{2}(\Omega;H)\to L^{2}(\Omega;H)}\leq2\min \{\frac{1}{\sigma_{R}},\frac{1}{\hat{\sigma}_{R}}\}\|X-\hat{X}\|_{L^{2}(\Omega;H)}.\label{eq:tangent-proj-Lip}
	\end{equation}	
\end{prop}
}
\begin{proof}
	{
	First, let us prove that $\|P_{U}-P_{\hat{U}}\|_{H\to H} = \|P_{U}-P_{\hat{U}}\|_{L^{2}(\Omega;H)\to L^{2}(\Omega;H)}$.
	On the one hand, it holds for all $v \in L^{2}(\Omega;H)$ that
    \begin{equation*}
    	\begin{aligned}
    		\|(P_{U}-P_{\hat{U}})v\|^2_{L^{2}(\Omega;H)} =  \mathbb{E}[\|(P_{U}-P_{\hat{U}})v(\omega)\|^2_H] \leq
    		\|P_{U}-P_{\hat{U}}\|^2_{H\to H} \|v\|^2_{L^{2}(\Omega;H)}.
    	\end{aligned}
    \end{equation*}
    On the other hand, 
since $v\in H$ with $\|v\|_H=1$ is trivially in $L^2(\Omega;H)$ with $\|v\|_{L^2(\Omega;H)}=1$ we have
    \begin{equation*}
    	\begin{aligned}
    	\|P_{U}-P_{\hat{U}}\|_{H\to H} = \sup\limits_{v\in H, \|v\|_H=1} \|(P_{U}-P_{\hat{U}})v\|_{L^{2}(\Omega;H)} &\leq  \sup\limits_{f \in L^{2}(\Omega;H), \|f\|_{L^2(\Omega;H)}=1} \|(P_{U}-P_{\hat{U}})f\|_{L^{2}(\Omega;H)}\\
    	& = \|P_{U}-P_{\hat{U}}\|_{L^{2}(\Omega;H)\to L^{2}(\Omega;H)}.
    		\end{aligned}
    \end{equation*}
}

{
	Fix $f\in L^{2}(\Omega;H)$. Using $\langle U_{j},X\rangle_{H}=\sigma_{j}V_{j}$
	in $\|P_{\hat{V}^{\perp}}P_{V}f\|_{L^{2}(\Omega;H)}=\bigl\|(\mathrm{id}-P_{\hat{V}})\sum_{j=1}^{R}\mathbb{E}[V_{j}f]V_{j}\bigr\|_{L^{2}(\Omega;H)}$
	yields 
	\[
	\begin{aligned}\|P_{\hat{V}^{\perp}}P_{V}f\|_{L^{2}(\Omega;H)} & =\Bigl\|(\mathrm{id}-P_{\hat{V}})\sum_{j=1}^{R}\frac{1}{\sigma_{j}}\mathbb{E}[V_{j}f]\langle U_{j},X\rangle_{H}\Bigr\|_{L^{2}(\Omega;H)}.\end{aligned}
	\]	
	We have $\sigma_{j}^{-1}\mathbb{E}[V_{j}f](\mathrm{id}-P_{\hat{V}})\langle U_{j},\hat{X}\rangle_{H}=0$
	for $j\in1,\dots,R$; indeed, 
	\[
	0=\langle U_{j},(\mathrm{id}-P_{\hat{V}})\hat{X}\rangle_{H}=\langle U_{j},\hat{X}\rangle_{H}-\langle U_{j},P_{\hat{V}}\hat{X}\rangle_{H}
	\]
	holds because $(\mathrm{id}-P_{\hat{V}})\hat{X}=0$, but \cite[Theorem 8.13]{Leoni.G_2017_book_Sobolev_2nd}
	implies 
	\begin{align*}
		\langle U_{j},P_{\hat{V}}\hat{X}\rangle_{H} & =\sum_{j=1}^{R}\langle U_{j},\mathbb{E}[\hat{V}_{j}\hat{X}]\rangle_{H}\hat{V}_{j}\\
		& =\sum_{j=1}^{R}\mathbb{E}[\langle U_{j},\hat{V}_{j}\hat{X}\rangle_{H}]\hat{V}_{j}=\sum_{j=1}^{R}\mathbb{E}[\hat{V}_{j}\langle U_{j},\hat{X}\rangle_{H}]\hat{V}_{j}=P_{\hat{V}}(\langle U_{j},\hat{X}\rangle_{H}),
	\end{align*}
	and thus $(\mathrm{id}-P_{\hat{V}})\langle U_{j},\hat{X}\rangle_{H}=0$
	for $j\in1,\dots,R$.	
	Hence, the Cauchy--Schwarz inequality implies 
	\[
	\begin{aligned}\|P_{\hat{V}^{\perp}}P_{V}f\|_{L^{2}(\Omega;H)}= & \Bigl\|(\mathrm{id}-P_{\hat{V}})\sum_{j=1}^{R}\frac{1}{\sigma_{j}}\mathbb{E}[V_{j}f]\langle U_{j},(X-\hat{X})\rangle_{H}\Bigr\|_{L^{2}(\Omega;H)}\\
		\leq & \|\mathrm{id}-P_{\hat{V}}\|_{L^{2}(\Omega;H)\to L^{2}(\Omega;H)}
		 \cdot\left(\sum_{j=1}^{R}\frac{1}{\sigma_{j}}\sqrt{\bigl\|\mathbb{E}[V_{j}f]\bigr\|_{H}^{2}\mathbb{E}\bigl[|\langle U_{j},(X-\hat{X})\rangle_{H}|^{2}\bigr]}\right)\\
		\leq & \frac{1}{\sigma_{R}}\left(\sum_{j=1}^{R}\bigl\|\mathbb{E}[V_{j}f]\bigr\|_{H}^{2}\right)^{1/2}\left(\sum_{j=1}^{R}\mathbb{E}\bigl[|\langle U_{j},(X-\hat{X})\rangle_{H}|^{2}\bigr]\right)^{1/2}.
	\end{aligned}
	\]
	Recalling the orthonormality assumptions on $U_{j}$ and $V_{j}$,
	we use Bessel's inequality to obtain $\sum_{j=1}^{R}\mathbb{E}\bigl[|\langle U_{j},(X-\hat{X})\rangle_{H}|^{2}\bigr]\leq\|X-\hat{X}\|_{L^{2}(\Omega;H)}^{2}$,
	Moreover, one can also prove that $\sum_{j=1}^{R}\|\mathbb{E}[fV_{j}]\|_{H}^{2}\leq\|f\|_{L^{2}(\Omega;H)}^{2}$
	following a similar argument to show  Bessel's inequality. 
	It follows $\|P_{\hat{V}^{\perp}}P_{V}f\|_{L^{2}(\Omega;H)}\le\frac{\|f\|_{L^{2}(\Omega;H)}}{\sigma_{R}}\|X-\hat{X}\|_{L^{2}(\Omega;H)}$.
	Therefore, via using Lemma \ref{lem: PuPu^} we conclude that
	\[
	\|P_{V}-P_{\hat{V}}\|_{L^{2}(\Omega;H)\to L^{2}(\Omega;H)}\leq\frac{1}{\sigma_{R}}\|X-\hat{X}\|_{L^{2}(\Omega;H)}.
	\]
	Likewise, $\|P_{U}-P_{\hat{U}}\|_{L^{2}(\Omega;H)\to L^{2}(\Omega;H)}\leq\frac{1}{\sigma_{R}}\|X-\hat{X}\|_{L^{2}(\Omega;H)}$
	holds. By exchanging the roles of $U$,$V$ with $\hat{U}$,$\hat{V}$, respectively, using the homogeneity property of the norm and \eqref{eq: P_UP_u^}, then the inequalities \eqref{eq: proj U and V} follow. 	
	Finally, the last claim \eqref{eq:tangent-proj-Lip} comes from
	\begin{equation*}
		\begin{aligned}\|P_{U}+P_{V}-P_{U}P_{V} &- (P_{\hat{U}}+P_{\hat{V}}-P_{\hat{U}}P_{\hat{V}})\|_{L^{2}(\Omega;H)\to L^{2}(\Omega;H)}\\
			= & \|(P_{U}-P_{\hat{U}})+(\mathrm{id}-P_{U})P_{V}-(\mathrm{id}-P_{\hat{U}})P_{\hat{V}}\|_{L^{2}(\Omega;H)\to L^{2}(\Omega;H)}\\
			= & \|(P_{U}-P_{\hat{U}})+(\mathrm{id}-P_{U})P_{V}-(\mathrm{id}-P_{U})P_{\hat{V}}\\
			& \qquad\qquad\qquad+(\mathrm{id}-P_{U})P_{\hat{V}}-(\mathrm{id}-P_{\hat{U}})P_{\hat{V}}\|_{L^{2}(\Omega;H)\to L^{2}(\Omega;H)}\\
			\leq & \|(P_{U}-P_{\hat{U}})(\mathrm{id}-P_{\hat{V}})\|_{L^{2}(\Omega;H)\to L^{2}(\Omega;H)}+\|(\mathrm{id}-P_{U})(P_{V}-P_{\hat{V}})\|_{L^{2}(\Omega;H)\to L^{2}(\Omega;H)}\\
			\leq &  2\min \{\frac{1}{\sigma_{R}},\frac{1}{\hat{\sigma}_{R}}\}\|X-\hat{X}\|_{L^{2}(\Omega;H)}.
		\end{aligned}
	\end{equation*}
}
\end{proof} 

We conclude this section giving another useful result inherent to orthogonal projections.

\begin{lem}
\label{lem: lip orth proj} Given $\gamma,{N}>0$ two positive constants,
let 
\[
\mathbb{B}_{\gamma,N}:=\{t\in[0,T]\to\boldsymbol{U}_{t}\in\mathbb{R}^{R\times d}\mid\sup\limits _{s\in[0,T]}\|(\boldsymbol{U}_{{s}}\boldsymbol{U}_{{s}}^{\top})^{-1}\|_{\mathrm{F}}\leq\gamma\text{ and }\sup\limits _{s\in[0,T]}\|\boldsymbol{U}_{s}\|_{\mathrm{F}}<\sqrt{{N}}\},
\]
where the invertibility of  $\boldsymbol{U}_{t}\boldsymbol{U}_{t}^{\top}$ is implicitly assumed.  {Given $\boldsymbol{U},\tilde{\boldsymbol{U}} \in B_{\gamma,N}$,} define the orthogonal projectors $P_{\boldsymbol{U}_{t}}^{\mathrm{row}}:=\boldsymbol{U}_{t}{}^{\top}(\boldsymbol{U}_{t}{}\boldsymbol{U}_{t}^{\top})^{-1}\boldsymbol{U}_{t},$
$P_{\tilde{\boldsymbol{U}_{t}}}^{\mathrm{row}}:=\tilde{\boldsymbol{U}_{t}}{}^{\top}(\tilde{\boldsymbol{U}_{t}}\tilde{\boldsymbol{U}_{t}}^{\top})^{-1}\tilde{\boldsymbol{U}_{t}}$
onto the rows of $\boldsymbol{U}_{t}$ and $\tilde{\boldsymbol{U}}_{t}$,
respectively. Then there exists a constant $C_{1}>0$ such that the
following holds 
\[
\sup\limits _{t\in[0,T]}\|P_{\boldsymbol{U}_{t}}^{\mathrm{row}}-P_{\tilde{\boldsymbol{U}_{t}}}^{\mathrm{row}}\|_{\mathrm{F}}\leq C_{1}\sup_{t\in[0,T]}\|\boldsymbol{U}_{t}-\tilde{\boldsymbol{U}_{t}}\|_{\mathrm{F}}.
\]
Further, given $\tilde{\gamma},M>0$ two positive constants, let 
\[
\mathbb{B}_{\tilde{\gamma},M}:=\{t\in[0,T]\to\boldsymbol{Y}_{t}\in[L^{2}(\Omega)]^{R}\mid\sup\limits _{s\in[0,T]}\|\mathbb{E}[{\boldsymbol{Y}}_{s}{\boldsymbol{Y}}_{s}^{\top}]^{-1}\|_{\mathrm{F}}\leq\tilde{\gamma}\text{ and }\mathbb{E}\bigl[\sup\limits _{s\in[0,T]}|\boldsymbol{Y}_{s}|^{2}\bigr]<M\}.
\]
{Given $\boldsymbol{Y},\tilde{\boldsymbol{Y}}\in\mathbb{B}_{\tilde{\gamma},M}$},
let $P_{\boldsymbol{Y}_{t}}[\cdot]:=\boldsymbol{Y}_{t}^{\top}\mathbb{E}[{\boldsymbol{Y}}_{t}{\boldsymbol{Y}}_{t}^{\top}]^{-1}\mathbb{E}[\boldsymbol{Y}_{t}\cdot]$,
$P_{\tilde{\boldsymbol{Y}}_{t}}[\cdot]:=\tilde{\boldsymbol{Y}}_{t}^{\top}\mathbb{E}[{\tilde{\boldsymbol{Y}}}_{t}{\tilde{\boldsymbol{Y}}}_{t}^{\top}]^{-1}\mathbb{E}[\tilde{\boldsymbol{Y}}_{t}\cdot]$
be the orthogonal projectors onto the subspace spanned by the components
of $\boldsymbol{Y}_{t}$ and $\tilde{\boldsymbol{Y}}_{t}$, respectively.
Then there exists a constant $C_{2}>0$ such that the following holds
\[
\mathbb{E}[\sup\limits _{t\in[0,T]}\|P_{\boldsymbol{Y}_{t}}-P_{\tilde{\boldsymbol{Y}}_{t}}\|_{L^{2}(\Omega)\to L^{2}(\Omega)}^{2}]^{\frac{1}{2}}\leq C_{2}\mathbb{E}[\sup\limits _{t\in[0,T]}|\boldsymbol{Y}_{t}-\tilde{\boldsymbol{Y}}_{t}|^{2}]^{\frac{1}{2}}
\]
\end{lem}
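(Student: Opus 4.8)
The plan is to handle both assertions with a single device: write each orthogonal projector as a triple product ``factor map'' $\times$ ``inverse Gram matrix'' $\times$ ``adjoint factor map'', and then bound the difference of two such projectors by a telescoping identity, estimating each resulting term by submultiplicativity of matrix/operator norms. For the deterministic part one uses $P^{\mathrm{row}}_{\boldsymbol{U}_t}=\boldsymbol{U}_t^{\top}(\boldsymbol{U}_t\boldsymbol{U}_t^{\top})^{-1}\boldsymbol{U}_t$. For the stochastic part one introduces, as in Section~\ref{sec: proj dyn}, the finite-rank operator $K_{\boldsymbol{Y}_t}\colon\mathbb{R}^R\to L^2(\Omega)$, $K_{\boldsymbol{Y}_t}\boldsymbol{v}=\boldsymbol{Y}_t^{\top}\boldsymbol{v}$, whose adjoint is $K_{\boldsymbol{Y}_t}^{*}=\mathbb{E}[\boldsymbol{Y}_t\,\cdot\,]$ and which satisfies $K_{\boldsymbol{Y}_t}^{*}K_{\boldsymbol{Y}_t}=C_{\boldsymbol{Y}_t}$, so that $P_{\boldsymbol{Y}_t}=K_{\boldsymbol{Y}_t}C_{\boldsymbol{Y}_t}^{-1}K_{\boldsymbol{Y}_t}^{*}$.

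For the first inequality I would use the identity
\begin{align*}
P^{\mathrm{row}}_{\boldsymbol{U}_t}-P^{\mathrm{row}}_{\tilde{\boldsymbol{U}}_t}
&=(\boldsymbol{U}_t-\tilde{\boldsymbol{U}}_t)^{\top}(\boldsymbol{U}_t\boldsymbol{U}_t^{\top})^{-1}\boldsymbol{U}_t
+\tilde{\boldsymbol{U}}_t^{\top}\bigl[(\boldsymbol{U}_t\boldsymbol{U}_t^{\top})^{-1}-(\tilde{\boldsymbol{U}}_t\tilde{\boldsymbol{U}}_t^{\top})^{-1}\bigr]\boldsymbol{U}_t\\
&\quad+\tilde{\boldsymbol{U}}_t^{\top}(\tilde{\boldsymbol{U}}_t\tilde{\boldsymbol{U}}_t^{\top})^{-1}(\boldsymbol{U}_t-\tilde{\boldsymbol{U}}_t),
\end{align*}
and estimate the three summands using $\|AB\|_{\mathrm{F}}\le\|A\|_2\|B\|_{\mathrm{F}}$, the a priori bounds $\|\boldsymbol{U}_t\|_2\le\|\boldsymbol{U}_t\|_{\mathrm{F}}<\sqrt R$ and $\|(\boldsymbol{U}_t\boldsymbol{U}_t^{\top})^{-1}\|_2\le\|(\boldsymbol{U}_t\boldsymbol{U}_t^{\top})^{-1}\|_{\mathrm{F}}\le\gamma$ valid because $\boldsymbol{U},\tilde{\boldsymbol{U}}\in\mathbb{B}_{\gamma,R}$, and the elementary resolvent bound $\|A^{-1}-B^{-1}\|_{\mathrm{F}}\le\|A^{-1}\|_2\|B^{-1}\|_2\|A-B\|_{\mathrm{F}}$ together with $\boldsymbol{U}_t\boldsymbol{U}_t^{\top}-\tilde{\boldsymbol{U}}_t\tilde{\boldsymbol{U}}_t^{\top}=(\boldsymbol{U}_t-\tilde{\boldsymbol{U}}_t)\boldsymbol{U}_t^{\top}+\tilde{\boldsymbol{U}}_t(\boldsymbol{U}_t-\tilde{\boldsymbol{U}}_t)^{\top}$; see also \cite[Lemma~3.5]{kazashi2021existence}. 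This produces $\|P^{\mathrm{row}}_{\boldsymbol{U}_t}-P^{\mathrm{row}}_{\tilde{\boldsymbol{U}}_t}\|_{\mathrm{F}}\le C_1\|\boldsymbol{U}_t-\tilde{\boldsymbol{U}}_t\|_{\mathrm{F}}$ with $C_1=C_1(\gamma,R)$, and taking $\sup_{t\in[0,T]}$ gives the first claim.

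For the second inequality the analogous telescoping is
\begin{equation*}
P_{\boldsymbol{Y}_t}-P_{\tilde{\boldsymbol{Y}}_t}
=(K_{\boldsymbol{Y}_t}-K_{\tilde{\boldsymbol{Y}}_t})C_{\boldsymbol{Y}_t}^{-1}K_{\boldsymbol{Y}_t}^{*}
+K_{\tilde{\boldsymbol{Y}}_t}\bigl(C_{\boldsymbol{Y}_t}^{-1}-C_{\tilde{\boldsymbol{Y}}_t}^{-1}\bigr)K_{\boldsymbol{Y}_t}^{*}
+K_{\tilde{\boldsymbol{Y}}_t}C_{\tilde{\boldsymbol{Y}}_t}^{-1}(K_{\boldsymbol{Y}_t}-K_{\tilde{\boldsymbol{Y}}_t})^{*}.
\end{equation*}
The key norm bounds, all obtained from Cauchy--Schwarz, are $\|K_{\boldsymbol{Y}_t}\|_{\mathbb{R}^R\to L^2(\Omega)}\le\mathbb{E}[|\boldsymbol{Y}_t|^2]^{1/2}<\sqrt M$, $\|K_{\boldsymbol{Y}_t}-K_{\tilde{\boldsymbol{Y}}_t}\|_{\mathbb{R}^R\to L^2(\Omega)}\le\mathbb{E}[|\boldsymbol{Y}_t-\tilde{\boldsymbol{Y}}_t|^2]^{1/2}$, $\|C_{\boldsymbol{Y}_t}^{-1}\|_2\le\tilde\gamma$ (from membership in $\mathbb{B}_{\tilde\gamma,M}$), and $\|C_{\boldsymbol{Y}_t}-C_{\tilde{\boldsymbol{Y}}_t}\|_{\mathrm{F}}\le 2\sqrt M\,\mathbb{E}[|\boldsymbol{Y}_t-\tilde{\boldsymbol{Y}}_t|^2]^{1/2}$, whence the resolvent bound gives $\|C_{\boldsymbol{Y}_t}^{-1}-C_{\tilde{\boldsymbol{Y}}_t}^{-1}\|_{\mathrm{F}}\le2\tilde\gamma^2\sqrt M\,\mathbb{E}[|\boldsymbol{Y}_t-\tilde{\boldsymbol{Y}}_t|^2]^{1/2}$. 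Combining these yields $\|P_{\boldsymbol{Y}_t}-P_{\tilde{\boldsymbol{Y}}_t}\|_{L^2(\Omega)\to L^2(\Omega)}\le C_2\,\mathbb{E}[|\boldsymbol{Y}_t-\tilde{\boldsymbol{Y}}_t|^2]^{1/2}$ with $C_2=C_2(\tilde\gamma,M,R)$. Since the left-hand side is a deterministic number, it is $\le C_2\,\mathbb{E}[\sup_{s\in[0,T]}|\boldsymbol{Y}_s-\tilde{\boldsymbol{Y}}_s|^2]^{1/2}$ for every $t$; taking $\sup_{t\in[0,T]}$ and then the (trivial) outer expectation gives the stated bound.

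The only genuinely delicate points, which I would write out with care, are: (i) verifying that every constant produced is \emph{uniform} over $\mathbb{B}_{\gamma,R}$, respectively $\mathbb{B}_{\tilde\gamma,M}$, i.e.\ independent of $t$ and of the particular curves; and (ii) the bookkeeping mixing the Frobenius norm on the Gram matrices with the spectral/operator norm needed when passing through the triple products, in particular for the infinite-dimensional factor $K_{\boldsymbol{Y}_t}$ (one must check it is genuinely bounded $\mathbb{R}^R\to L^2(\Omega)$ and that $\|AB\|\le\|A\|_2\|B\|$-type inequalities apply). Everything else is routine submultiplicativity. As an alternative one could deduce the second bound from the general projector-perturbation estimate of Proposition~\ref{prop: lip of the proj} after normalising the factors of $\boldsymbol{Y}_t$, but the direct telescoping argument above is shorter in this finite-rank setting.
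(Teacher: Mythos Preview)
Your proposal is correct and follows essentially the same route as the paper: the paper also writes the difference of projectors as the three-term telescoping sum and bounds each term using Cauchy--Schwarz together with the Lipschitz bound on the inverse Gram matrix from \cite[Lemma~3.5]{kazashi2021existence}, then declares the $\boldsymbol{U}$-case ``analogous''. The only cosmetic difference is that the paper applies the projector difference to a test function $g$ and carries a pathwise bound $\sup_t|(P_{\boldsymbol{Y}_t}-P_{\tilde{\boldsymbol{Y}}_t})g|$ before squaring and taking expectation, whereas you work directly at the level of the (deterministic) operator norm and observe that the outer expectation is then trivial; both arrive at the same conclusion.
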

\begin{proof}
For $\boldsymbol{Y}_{t},\tilde{\boldsymbol{Y}_{t}}\in[L^{2}(\Omega)]^{R}$
we have
\begin{align*}
\boldsymbol{Y}_{t}^{\top}\mathbb{E}[{\boldsymbol{Y}}_{t}{\boldsymbol{Y}}_{t}^{\top}]^{-1}\mathbb{E}[\boldsymbol{Y}_{t}g] & -\tilde{\boldsymbol{Y}}_{t}^{\top}\mathbb{E}[\tilde{\boldsymbol{Y}}_{t}\tilde{\boldsymbol{Y}}_{t}^{\top}]^{-1}\mathbb{E}[\tilde{\boldsymbol{Y}}_{t}g]\\
= & (\boldsymbol{Y}_{t}^{\top}-\tilde{\boldsymbol{Y}}_{t}^{\top})\mathbb{E}[{\boldsymbol{Y}}_{t}{\boldsymbol{Y}}_{t}^{\top}]^{-1}\mathbb{E}[\boldsymbol{Y}_{t}g]\\
& +\tilde{\boldsymbol{Y}}_{t}^{\top}(\mathbb{E}[{\boldsymbol{Y}}_{t}{\boldsymbol{Y}}_{t}^{\top}]^{-1}-\mathbb{E}[\tilde{\boldsymbol{Y}}_{t}\tilde{\boldsymbol{Y}}_{t}^{\top}]^{-1})\mathbb{E}[\boldsymbol{Y}_{t}g]\\
& +\tilde{\boldsymbol{Y}}_{t}^{\top}\mathbb{E}[\tilde{\boldsymbol{Y}}_{t}\tilde{\boldsymbol{Y}}_{t}^{\top}]^{-1}(\mathbb{E}[\boldsymbol{Y}_{t}g]-\mathbb{E}[\tilde{\boldsymbol{Y}}_{t}g]).
\end{align*}
{Let $\boldsymbol{Y},\tilde{\boldsymbol{Y}}\in\mathbb{B}_{\tilde{\gamma},M}$}.
From \cite[Lemma 3.5]{kazashi2021existence}, there exists a constant
$C(R,M)>0$ such that 
\begin{align*}
\|\mathbb{E}[{\boldsymbol{Y}}_{t}{\boldsymbol{Y}}_{t}^{\top}]^{-1}-\mathbb{E}[\tilde{\boldsymbol{Y}}_{t}\tilde{\boldsymbol{Y}}_{t}^{\top}]^{-1}\|_{\mathrm{F}} & \leq\tilde{\gamma}C(R,M)\mathbb{E}[|\boldsymbol{Y}_{t}-\tilde{\boldsymbol{Y}}_{t}|^{2}]^{\frac{1}{2}}\\
& \leq \tilde{\gamma}C(R,M)\mathbb{E}[\sup_{t\in[0,T]}|\boldsymbol{Y}_{t}-\tilde{\boldsymbol{Y}}_{t}|^{2}]^{\frac{1}{2}},
\end{align*}
and thus 
\begin{align*}
\sup_{t\in[0,T]}|(P_{\boldsymbol{Y}_{t}}-P_{\tilde{\boldsymbol{Y}}_{t}})g| & \leq\tilde{\gamma}\sup_{t\in[0,T]}|\boldsymbol{Y}_{t}-\tilde{\boldsymbol{Y}}_{t}|\,\mathbb{E}[\sup_{t\in[0,T]}|\boldsymbol{Y}_{t}||g|]\\
& +\tilde{\gamma}C(R,M)\Bigl(\sup_{t\in[0,T]}|\tilde{\boldsymbol{Y}}_{t}|\Bigr)\Bigl(\mathbb{E}[\sup_{t\in[0,T]}|\boldsymbol{Y}_{t}-\tilde{\boldsymbol{Y}}_{t}|^{2}]^{\frac{1}{2}})\Bigr)\mathbb{E}[\sup_{t\in[0,T]}|\boldsymbol{Y}_{t}||g|]\\
& +\Bigl(\sup_{t\in[0,T]}|\tilde{\boldsymbol{Y}}_{t}|\Bigr)\tilde{\gamma}(\mathbb{E}[\sup_{t\in[0,T]}|\boldsymbol{Y}_{t}-\tilde{\boldsymbol{Y}}_{t}||g|]).
\end{align*}
Then, using the Cauchy--Schwarz inequality to $\mathbb{E}[\sup_{t\in[0,T]}|\boldsymbol{Y}_{t}||g|]$
and $\mathbb{E}[\sup_{t\in[0,T]}|\boldsymbol{Y}_{t}-\tilde{\boldsymbol{Y}}_{t}||g|]$,
taking the square of both sides and taking the expectation yields
the result. The result for $P_{\boldsymbol{U}_{t}}^{\mathrm{row}}$
can be analogously shown.
\end{proof}

\printbibliography

\end{document}